\documentclass[11pt,reqno]{amsart}
\usepackage[T1]{fontenc}
\usepackage[utf8]{inputenc}
\usepackage[english]{babel}
\usepackage{mathptmx}
\usepackage{microtype}
\usepackage[a4paper,margin=2.6cm]{geometry}
\usepackage{amsmath,amssymb,amsthm}
\usepackage{xcolor}
\usepackage[most]{tcolorbox}
\usepackage{booktabs}
\usepackage{graphicx}
\usepackage{enumitem}
\usepackage{hyperref}
\hypersetup{colorlinks=true,linkcolor=black!75,citecolor=black!75,urlcolor=blue!55!black}

\definecolor{rigorousgreen}{HTML}{1B5E20}
\definecolor{conditionalorange}{HTML}{E65100}
\definecolor{heuristicblue}{HTML}{1565C0}
\definecolor{negativered}{HTML}{B71C1C}

\newtheorem{theorem}{Theorem}[section]
\newtheorem{proposition}[theorem]{Proposition}
\newtheorem{lemma}[theorem]{Lemma}
\newtheorem{corollary}[theorem]{Corollary}
\newtheorem{conjecture}[theorem]{Conjecture}
\theoremstyle{definition}
\newtheorem{definition}[theorem]{Definition}
\newtheorem{remark}[theorem]{Remark}
\newtheorem{observation}[theorem]{Observation}
\newtheorem{hypothesis}[theorem]{Hypothesis}
\newtheorem{openproblem}[theorem]{Open Problem}

\newcommand{\etag}[2]{\par\smallskip\noindent{\small\textbf{\textcolor{#1}{$\blacksquare$ #2}}}\par\nopagebreak}
\newcommand{\Rig}{\etag{rigorousgreen}{RIGOROUS}}
\newcommand{\Cond}{\etag{conditionalorange}{CONDITIONAL}}
\newcommand{\Heur}{\etag{heuristicblue}{HEURISTIC}}
\newcommand{\Neg}{\etag{negativered}{NEGATIVE}}

\newcommand{\Z}{\mathbb{Z}}
\newcommand{\N}{\mathbb{N}}
\newcommand{\R}{\mathbb{R}}
\newcommand{\PP}{\mathbb{P}}
\newcommand{\E}{\mathbb{E}}
\newcommand{\qmin}{q_{\min}}
\newcommand{\eps}{\varepsilon}
\DeclareMathOperator{\Var}{Var}
\DeclareMathOperator{\Cov}{Cov}
\DeclareMathOperator{\RMS}{RMS}

\begin{document}

\title[A parametric family of primes]{A parametric family of primes\\ $p=km(m+1)+\eps+2kq$:\\ heuristic laws, conditional theorems,\\ and unconditional primality certificates}
\author{Hassane Bakkaoui}
\address{Independent researcher}
\email{bakkahassa@hotmail.com}
\date{\today}
\subjclass[2020]{Primary 11N32; Secondary 11N13, 11Y11, 11A41, 11M41, 62P99}
\keywords{Prime numbers, quadratic forms, centred hexagonal numbers, Bateman--Horn conjecture, Pocklington--Lehmer criterion, Generalised Riemann Hypothesis, prime exponential sums, square-root phase, Weyl equidistribution, experimental mathematics}

\begin{abstract}
We study the parametric family $p_{k,m,\eps,q}=km(m+1)+\eps+2kq$ with $k,m\in\N^{\ast}$, $\eps\in\{\pm1\}$, $q\in\Z$, which extends the elementary observation $p\equiv\pm1\pmod 6$ for every prime $p>3$. Each prime $p>k+1$ has a canonical triple $(m,\eps,q)$ with $|q|$ minimal, mapping it to a generalised hexagonal base $H^{(k)}_m:=km(m+1)+1$ and a signed minimal offset $\qmin(k,m,\eps)$. Every quantitative assertion is labelled rigorous, conditional, or heuristic.
\emph{(Rigorous, structural.)} For every prime $\ell\mid 2k$ the family satisfies $p\equiv\eps\pmod\ell$: the degenerate modular axis is fixed exactly, independently of $q$.
\emph{(Rigorous, constructive.)} For the $3$-smooth subfamily $p=3m(m+1)+1$ with $m=2^a3^b-1$, Pocklington--Lehmer certificates are unconditional; two a-priori arithmetic filters remove $\approx 87\%$ of candidates before any large-integer arithmetic, and the algorithm produced an unconditionally certified prime of $29\,998$ decimal digits, independently re-verified in a separate computational environment.
\emph{(Rigorous, negative.)} The earlier-reported spectral correlation between the cumulative functional $Q(r)=\sum q_n$ and the zeros of $\zeta$ is a spurious-regression artefact. Beyond three permutation tests and a bias-free test on $10^8$ primes ($R^2=1.16\times10^{-7}$), we prove \emph{unconditionally} that the regression amplitude $A_N(\gamma)\to 0$ for every fixed $\gamma$, by reduction to square-root-phase prime exponential sums. The Riemann zeros are not spectral frequencies of $Q(r)$.
\emph{(Conditional, GRH / Bateman--Horn.)} $|\qmin|=O_k(m\log^2 m)$; $\E[|q|\mid m]=m/4+O((\log m)^2/\sqrt m)$; a modular distribution law.
\emph{(Conditional, RH.)} The per-prime $\zeta$-footprint on the layer occupancy is $\ll p^{-1/4}(\log p)^2$ (Selberg variance)---the mechanism behind the negative result.
\emph{(Heuristic.)} $\E[|\qmin|]\sim\log m/C_k$ ($R^2=0.984$); the geometric constant $C_0(k)=\langle|q|\rangle/\sqrt p=1/(4\sqrt k)$, stable to ${<}0.02\%$ across $3\le k\le 29$; and, for $\ell\mid 2k$, $\qmin$ is empirically equidistributed modulo $\ell$ (\emph{not} a consequence of Bombieri--Vinogradov).
This is experimental mathematics with rigorously tracked hypotheses: two genuinely unconditional pillars---constructive (the certified prime) and analytic-negative (the vanishing $\zeta$-signal)---together with a precise law on each axis. No classical question is settled.
\end{abstract}

\maketitle

\begin{tcolorbox}[colback=gray!4,colframe=black!45,boxrule=0.4pt,arc=2pt]
\textbf{Epistemic key (read first).} Four levels of certainty are tagged throughout:
\par\smallskip
\textcolor{rigorousgreen}{$\blacksquare$ \textbf{RIGOROUS}}: unconditional, proved from classical theorems.\\
\textcolor{conditionalorange}{$\blacksquare$ \textbf{CONDITIONAL}}: established under explicit hypotheses (GRH, RH, Bateman--Horn).\\
\textcolor{heuristicblue}{$\blacksquare$ \textbf{HEURISTIC}}: predicted under standard heuristics and validated numerically.\\
\textcolor{negativered}{$\blacksquare$ \textbf{NEGATIVE}}: rigorous invalidation of a previously reported claim (including the author's own).
\par\smallskip
No result is presented as an unconditional breakthrough.
\end{tcolorbox}

\section{Introduction}

\subsection{Motivation and programme}
This paper undertakes a theoretical exploration of the distribution of primes through a parametric approach. We rewrite primes under a doubly indexed parametric form in order to analyse structures, regularities, and concentration phenomena observable within specific parametric frames---not in the sense of a strict periodicity, but as the possible existence of arithmetic constraints. We concentrate throughout on what can be demonstrated and on observations formulable within a rigorous framework. The paper does not claim to settle any classical question; it opens a complementary direction of study.

The seed is elementary: every prime $p>3$ satisfies $p\equiv\pm1\pmod6$, i.e.\ $p$ lies at signed distance one from the nearest multiple of $6$. The family studied here,
\begin{equation}\label{eq:family}
p_{k,m,\eps,q}=km(m+1)+\eps+2kq,
\end{equation}
generalises this by replacing $6$ with $2k$, the linear variable by the pronic number $m(m+1)$, and by making the search offset $q$ explicit. For fixed $m,\eps$, \eqref{eq:family} is an arithmetic progression of common difference $2k$, to which Dirichlet's theorem applies unconditionally. The minimal offset $\qmin(k,m,\eps)$---the integer of least absolute value making $p$ prime---is the central object.

Three layers describe $\qmin$: the \emph{marginal} layer (size $|\qmin|$ versus $m$, \S5), the \emph{conditional} layer (law of $|q|$ given $m$, \S\S4--5), and the \emph{modular} layer ($\qmin\bmod\ell$, \S\S3--4), complemented by a fourth \emph{computational} layer (\S\S3,7).

\subsection{The parametric family}
We fix $k\in\N^{\ast}$ admissible, meaning $\gcd(\eps,2k)=1$ for at least one $\eps\in\{\pm1\}$; set $E_k:=\{\eps\in\{\pm1\}:\gcd(\eps,2k)=1\}$. Since $\gcd(\pm1,n)=1$, $E_k=\{-1,+1\}$ for all $k\ge1$. The generalised hexagonal base is $H^{(k)}_m:=km(m+1)+1$. For $k=3$, $\{H^{(3)}_m\}=\{7,19,37,61,91,127,\dots\}$ are the centred hexagonal numbers (OEIS A003215), lattice points of $\Z[\omega]$, $\omega=e^{2\pi i/3}$.

\subsection{Main results}
The complete epistemic synopsis is Table~\ref{tab:layers}; proofs occupy \S\S3--6.

\begin{table}[h]
\centering
\caption{The three layers of the parametric prime family.}\label{tab:layers}
\small
\begin{tabular}{@{}llll@{}}
\toprule
Layer & Quantity & Main law & Status\\
\midrule
Marginal & $\E[|\qmin|]$ & $\sim \log m/C_k$ & Heuristic\\
Conditional & $\E[|q|\mid m]$ & $=m/4+O((\log m)^2/\sqrt m)$ & Conditional\\
Modular & $P(\qmin\equiv r)$ & $=\pi_{\mathrm{perm}}(r)/(\ell-1)+O(N^{-1/2})$ & Conditional\\
Modular ($\ell\mid 2k$) & $p\bmod\ell$ & $\equiv\eps$ exactly, indep.\ of $q$ & \textbf{Rigorous}\\
Modular ($\ell\mid 2k$) & $\qmin\bmod\ell$ & $F_N(r)=1/\ell+O(N^{-1/2})$ & Heuristic\\
Computational & $p=3m(m+1)+1$ & Pocklington certificate & \textbf{Rigorous}\\
Analytic-negative & $A_N(\gamma)$ & $\to0$ (rate $X^{-\delta}$ cond.) & \textbf{Rigorous}\\
\bottomrule
\end{tabular}
\end{table}

\begin{theorem}[Rigorous; degenerate modular axis, Theorem~\ref{thm:A}]
Let $k\in\N^{\ast}$ and $\ell$ a prime dividing $2k$. Then for every admissible $\eps$, every $m,q$,
\[ p_{k,m,\eps,q}\equiv\eps \pmod\ell, \]
independently of $q$. The degenerate modular axis is fixed exactly.
\end{theorem}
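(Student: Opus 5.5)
The plan is a direct reduction modulo $\ell$: show that every term of \eqref{eq:family} except $\eps$ is divisible by $\ell$. Write
\[
p_{k,m,\eps,q}-\eps = km(m+1)+2kq = k\bigl(m(m+1)+2q\bigr).
\]
The right-hand side is a product of integers, so it suffices to see that $\ell$ divides one of the factors. I would split according to whether $\ell$ is odd or $\ell=2$.

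\emph{Odd $\ell$.} Since $\ell$ is prime and divides $2k$, it divides $2$ or $k$. An odd prime cannot divide $2$, so $\ell\mid k$. Then $\ell\mid k\bigl(m(m+1)+2q\bigr)$ for all integers $m,q$, which gives $p\equiv\eps\pmod\ell$.

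\emph{The case $\ell=2$.} Here $\ell$ need not divide $k$, so use the other factor instead. The pronic number $m(m+1)$ is a product of consecutive integers and hence even. The term $2q$ is also even, so $m(m+1)+2q$ is even. Therefore $2\mid k\bigl(m(m+1)+2q\bigr)$ for every $k$, and so $p\equiv\eps\pmod 2$.

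Neither step depends on $q$; $q$ only enters through the term $2kq$. That term is divisible by $\ell$ in both cases: by $\ell\mid k$ when $\ell$ is odd, and by the factor $2$ when $\ell=2$. This gives the claimed independence of $q$. Admissibility of $\eps$ plays no role beyond $\eps\in\{\pm1\}$, which the introduction notes is automatic, so the congruence holds for every $\eps\in E_k$.

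There is no real obstacle. The one point needing care is $\ell=2$ with $k$ odd, where the divisibility must come from the parity of $m(m+1)$ rather than from $k$.
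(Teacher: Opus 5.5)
Your proposal is correct and follows essentially the paper's proof. The paper first disposes of $2kq$ using $\ell\mid 2k$ and then splits on $\ell\mid k$ versus $\ell\nmid k$, which forces $\ell=2$, whereas you factor out $k$ and split on the parity of $\ell$; the key ingredient in both is the evenness of $m(m+1)$. The paper also remarks that $\eps\not\equiv 0\pmod\ell$, which is immediate since $\eps=\pm1$.
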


\begin{theorem}[Rigorous; Theorem~\ref{thm:B}]
For every $(a,b)\in(\N^{\ast})^2$ such that $p:=3m(m+1)+1$ with $m=2^a3^b-1$ is prime, the Pocklington--Lehmer criterion gives an unconditional certificate with $F=2^a3^{b+1}>\sqrt p$. The algorithm produced a prime $p^{\ast}=3m^{\ast}(m^{\ast}+1)+1$, $m^{\ast}=2^{19435}\cdot3^{19173}-1$, of $29\,998$ decimal digits, certified unconditionally in $3$h$06$min on consumer hardware.
\end{theorem}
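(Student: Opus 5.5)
The plan is the standard $N-1$ Pocklington--Lehmer argument, after writing $p-1$ in fully factored form. With $m=2^a3^b-1$ we have $m+1=2^a3^b$, so
\[
p-1=3m(m+1)=2^a3^{b+1}\,m .
\]
Set $F:=2^a3^{b+1}$ and $R:=m=F/3-1$. The prime divisors of $F$ are exactly $2$ and $3$ (since $a,b\ge1$), so $F$ is completely factored and $F\mid p-1$.

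The size condition $F>\sqrt p$ is immediate. We have $F^2=9(m+1)^2$, while $p=3m^2+3m+1<9(m+1)^2$ for every $m\ge1$. Since $a,b\ge1$ gives $m\ge5$, there is slack even in the weaker requirement $F\ge\sqrt{p}$ needed for the criterion.

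Next we invoke the Pocklington--Lehmer theorem in the form: if $F\mid N-1$ with $F>\sqrt N$ fully factored, and for each prime $r\mid F$ there is a base $a_r$ with
\[
a_r^{N-1}\equiv1\pmod N, \qquad \gcd\bigl(a_r^{(N-1)/r}-1,N\bigr)=1,
\]
then $N$ is prime. The proof of the criterion is classical. For any prime divisor $s$ of $N$, the order of $a_r$ modulo $s$ is divisible by $r^{v_r(F)}$. Hence $F\mid s-1$, so $s>\sqrt N$, and $N$ cannot be composite.

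Conversely, if $p$ is prime the witnesses exist: take a primitive root $g$ modulo $p$, so that $g^{(p-1)/r}\not\equiv1$ for each $r$. The statement therefore follows. Whenever $p$ is prime, a certificate consisting of $F=2^a3^{b+1}$ together with witnesses $a_2$ and $a_3$ exists and is checkable in polynomial time. Its validity rests on no hypothesis, which gives unconditionality. In practice I would take small bases $a=2,3,5,\dots$, because witnesses are plentiful: the proportion of bases failing for a given $r$ is $1/r$.

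The computational half is a report rather than a proof. I would verify it by recording $a=19435$, $b=19173$ and the digit count $\lfloor\log_{10}p^\ast\rfloor+1=29998$. I would then exhibit the explicit witnesses and the two gcd checks, re-run in an independent environment. The main obstacle is cost, not mathematics: each check is a modular exponentiation on a roughly $10^5$-bit modulus. The one subtle point is ensuring the gcd test uses $(p-1)/r$ for both $r=2$ and $r=3$, rather than a single Fermat test. The runtime figure is simply quoted.
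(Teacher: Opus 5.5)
Your proposal is correct and follows the paper's route. The factorisation $p-1=2^a3^{b+1}m$ with $F^2=9(m+1)^2>p$ is exactly Lemma~\ref{lem:36}, after which the paper simply invokes Pocklington--Lehmer; you additionally prove the criterion and its converse via a primitive root, which the paper leaves implicit. The paper likewise treats the record as a reported computation with explicit witnesses $w_2=5$, $w_3=7$ and an independent re-verification (\S\ref{sec:verif}), just as you propose.
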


\begin{theorem}[Rigorous, unconditional; Theorem~\ref{thm:67}]
For every fixed $\gamma\in\R$, the regression amplitude
$A_N(\gamma)=\tfrac{2}{\pi(X)}\big|\sum_{p\le X}(\{\sqrt{p/3}\}-\tfrac12)\,p^{-i\gamma}\big|$
satisfies $A_N(\gamma)\to0$ unconditionally (with an explicit power saving $X^{-\delta}$ under a standard Type II estimate). No ordinate of $\zeta$ carries a surviving linear signal.
\end{theorem}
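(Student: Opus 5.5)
The plan is to reduce the statement to the equidistribution modulo $1$ of $\sqrt{p/3}$ over primes, a classical square-root-phase prime exponential sum estimate. Write $\psi(t)=\{t\}-\tfrac12$ and
\[
S(X,\gamma):=\sum_{p\le X}\psi\big(\sqrt{p/3}\big)\,p^{-i\gamma},
\]
so that $A_N(\gamma)=2|S(X,\gamma)|/\pi(X)$. It suffices to show $S(X,\gamma)=o(\pi(X))$ for each fixed $\gamma$.

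\emph{Step 1: remove the twist $p^{-i\gamma}$.} I would split $[2,X]$ into $O(\log X)$ dyadic ranges $(Y,2Y]$ and apply Abel summation on each. On $(Y,2Y]$ the weight $x^{-i\gamma}$ has modulus one and total variation $\int_Y^{2Y}|\gamma|x^{-1}\,dx=|\gamma|\log 2$, which is bounded independently of $Y$. Hence
\[
\Big|\sum_{Y<p\le 2Y}\psi\big(\sqrt{p/3}\big)p^{-i\gamma}\Big|\le(1+|\gamma|\log 2)\max_{Y<u\le 2Y}\Big|\sum_{Y<p\le u}\psi\big(\sqrt{p/3}\big)\Big|.
\]
Dyadic ranges with $Y\le X/\log X$ contribute trivially $O(\pi(X/\log X))=o(\pi(X))$. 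So it is enough to prove the untwisted bound $\sum_{p\le u}\psi(\sqrt{p/3})=o(\pi(u))$ as $u\to\infty$.

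\emph{Step 2: pass from $\psi$ to exponentials.} I would use Vaaler's trigonometric approximation. For every $H\ge1$,
\[
\Big|\psi(t)-\sum_{1\le|h|\le H}c_h\,e(ht)\Big|\le\frac{1}{2H+2}\sum_{|h|\le H}\Big(1-\frac{|h|}{H+1}\Big)e(ht),\qquad |c_h|\le\frac{1}{2\pi|h|},
\]
with $e(t)=e^{2\pi i t}$. Summing over $p\le u$ bounds the untwisted sum by
\[
\frac{\pi(u)}{2H+2}+\sum_{1\le|h|\le H}\Big(|c_h|+\tfrac{1}{2H+2}\Big)\,|W_h(u)|,\qquad W_h(u):=\sum_{p\le u}e\big(h\sqrt{p/3}\big).
\]
Choosing $H$ large but fixed, it remains to show $W_h(u)=o(\pi(u))$ for each fixed $h\neq0$. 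This is exactly Weyl's criterion for the equidistribution of $\sqrt{p/3}$ modulo $1$.

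\emph{Step 3 (main obstacle): the prime exponential sum.} Here I would invoke the classical theorem of Vinogradov, in the sharper form of Balog and Harman, that $\alpha p^{\theta}$ is equidistributed modulo $1$ for non-integer $\theta$. If a self-contained argument is wanted, I would pass to $\sum_{n\le u}\Lambda(n)e(h\sqrt{n/3})$ by partial summation, discarding prime powers at cost $O(\sqrt u)$, and apply Vaughan's identity with parameters $U=V=u^{1/3}$.

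\begin{itemize}
\item The Type~I sums $\sum_{d\le U}a_d\sum_{m}e(h\sqrt{dm/3})$ are bounded by van der Corput's second-derivative test. The phase $x\mapsto h\sqrt{dx/3}$ has second derivative of size $\asymp h\sqrt d\,(u/d)^{-3/2}$, and this test gives a power saving.
\item The Type~II sums $\sum_{M<m\le 2M}\sum_{n}a_mb_n e(h\sqrt{mn/3})$ are handled by Cauchy--Schwarz followed by Weyl differencing in $m$. The differenced phase $h(\sqrt{m_1}-\sqrt{m_2})\sqrt{n/3}$ is again a non-degenerate square-root phase, which is treated by Kusmin--Landau or the second-derivative test, while the diagonal $m_1=m_2$ is estimated trivially.
\end{itemize}

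Together these give $W_h(u)\ll_h u^{1-\delta}$ for some $\delta>0$. This also produces the explicit power saving mentioned in the statement, since with $H=X^{\delta/2}$ in Step~2 the losses remain polynomially small. This step is the only genuinely deep input and the one I expect to be hardest. The main care is the Type~II range near $M\approx u^{1/2}$, where the differenced phase can become small. I would handle that range by treating $|m_1-m_2|\le M u^{-\eta}$ as part of the trivially estimated diagonal.

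Combining Steps 1--3 gives $S(X,\gamma)=o(\pi(X))$, hence $A_N(\gamma)\to0$ for every fixed $\gamma$. The only dependence on $\gamma$ is the constant $1+|\gamma|\log 2$ from Step~1, so the limit holds for each fixed ordinate, and in particular for every ordinate of a zero of $\zeta$.
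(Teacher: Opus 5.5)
Your proposal is correct and follows essentially the paper's route: Vaaler's approximation, partial summation to strip the slowly varying twist $p^{-i\gamma}$, and Balog/Vinogradov equidistribution of $\sqrt{p/3}$ modulo $1$ for the unconditional limit, with a Vaughan-identity Type~I/Type~II sketch used only for the rate. Your ordering, dyadic Abel summation first and then Vaaler with fixed $H$ followed by $H\to\infty$, is exactly what makes the paper's one-line deduction of part (a) from Lemma~\ref{lem:T}(i) rigorous. The paper's displayed chain with $H=X^{1/4}$ needs uniformity in $h$, and only the conditional part (ii) supplies that.
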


\begin{theorem}[Conditional under GRH; Theorem~\ref{thm:C}]
Under GRH, for each fixed admissible $k,\eps$, $|\qmin(k,m,\eps)|=O_k(m(\log m)^2)$.
\end{theorem}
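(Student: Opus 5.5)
Under GRH, the plan is to apply the explicit form of the prime number theorem for arithmetic progressions to the progression that defines the family. For fixed $k$, $m$ and admissible $\eps$, the values $p_{k,m,\eps,q}$ with $q\in\Z$ are exactly the integers $n\equiv a\pmod{2k}$, where $a:=km(m+1)+\eps$. Since $\gcd(a,2k)=\gcd(\eps,2k)=1$ (because $m(m+1)$ is even, $2k\mid km(m+1)$), this is a reduced residue class modulo $2k$. The quantity $|\qmin|$ is at most the distance, measured in steps of $2k$, from the centre $N_0:=km(m+1)+\eps\asymp km^2$ to the nearest prime in this class. It therefore suffices to show that every window $(N_0-H,N_0+H]$ with $H\ge c_k\,m(\log m)^2$ contains a prime $\equiv a\pmod{2k}$. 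Then $|\qmin|\le H/(2k)=O_k(m(\log m)^2)$.

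\textbf{Step 1.} Under GRH for the Dirichlet $L$-functions modulo $2k$, we have $\psi(x;2k,a)=x/\varphi(2k)+O(\sqrt x\,(\log x)^2)$ uniformly in $x\ge2$ (Titchmarsh/Davenport, explicit formula). Differencing at $x=N_0+H$ and $x=N_0-H$ gives
\[\psi(N_0+H;2k,a)-\psi(N_0-H;2k,a)=\frac{2H}{\varphi(2k)}+O\bigl(\sqrt{N_0}\,(\log N_0)^2\bigr).\]

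\textbf{Step 2.} Since $\sqrt{N_0}\asymp\sqrt k\,m$ and $\log N_0\asymp\log m$, choosing $H=C\varphi(2k)\sqrt k\,m(\log m)^2$ with $C$ large makes the main term dominate. The window then carries $\psi$-mass $\gg H$.

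\textbf{Step 3.} Remove prime powers. Their contribution to $\psi$ in the window is $O(\sqrt{N_0}\log N_0)=O(m\log m)$, which is negligible. Hence the window contains a genuine prime $p\equiv a\pmod{2k}$. Writing $p=N_0+2kq$ gives $|q|\le H/(2k)$, and that concludes the argument for large $m$. Finitely many small $m$ are absorbed into the implied constant, since Dirichlet's theorem guarantees that $\qmin$ exists for each of them.

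\textbf{Main obstacle.} The delicate point is uniformity. The GRH error term must hold uniformly in $x$ with an implied constant depending only on $k$, which it does because the modulus $2k$ is fixed. We also need $N_0-H>0$, which is automatic since $H=o(m^2)$. The exponent $2$ on $\log m$ comes directly from the $\sqrt x(\log x)^2$ error. Sharpening it, for example Cramér-type $O((\log m)^2)$ gaps, is out of reach even on GRH, so I would not attempt it.
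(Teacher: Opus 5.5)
Your proposal is correct and follows essentially the same route as the paper. The paper invokes a GRH-conditional bound placing a prime $\equiv A \pmod{2k}$ in $(A, A+C(2k)\sqrt{A}(\log A)^2]$ with $A=km(m+1)+\eps$, and you derive that short-interval statement yourself by differencing $\psi(x;2k,a)=x/\varphi(2k)+O(\sqrt{x}(\log x)^2)$ and discarding prime powers; you use a symmetric window instead of the paper's one-sided one, which gives the same $O_k(m(\log m)^2)$ bound on $|\qmin|$.
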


\begin{theorem}[Conditional under H1+GRH+H3; Theorem~\ref{thm:D}]
For $k=3$, $\eps\in\{\pm1\}$, $\E[|q|\mid m]=\tfrac{m}{4}+O((\log m)^2/\sqrt m)$.
\end{theorem}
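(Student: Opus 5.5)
The plan is to read $\E[|q|\mid m]$ as an average over the \emph{layer} $L_m$: the set of primes $p$ whose canonical triple has first coordinate $m$, so $p=3m(m+1)+\eps+6q$ with $q$ running over an interval $I_m$ of length $\approx m$ centred at $0$. First I would fix the layer geometry from the definition of the canonical triple. The neighbouring pronic centres are $3m(m+1)$ and $3(m\pm1)(m\pm1+1)$, and the cut points are the midpoints $3m^2$ and $3(m+1)^2$. Hence $6q$ ranges over an interval of the form $[-3m,3m+O(1)]$, that is, $|q|\le m/2+O(1)$. I would then write
\[
\E[|q|\mid m]=\frac{\sum_{p\in L_m}|q(p)|}{\#L_m}
=\frac{\int_{I_m}|t|\,d\Pi_\eps(t)}{\int_{I_m}d\Pi_\eps(t)},
\]
where $\Pi_\eps(t)$ counts primes $\equiv\eps\pmod 6$ of the form $3m(m+1)+\eps+6s$ with $s\le t$. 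By Theorem~\ref{thm:A}, with $\ell=2,3$, every element of the layer lies in the residue class $\eps\bmod 6$. So only the single progression $\eps\bmod 6$ enters, and no further modular restriction arises.

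Second, I would insert the short-interval hypothesis, which I expect H1 to be: a Bateman--Horn/Selberg-type asymptotic for primes $\equiv\eps\pmod 6$ in intervals of length $h\asymp\sqrt x$ around $x\asymp 3m^2$. Its shape would be
\[
\Pi_\eps(t_2)-\Pi_\eps(t_1)=\frac{3(t_2-t_1)}{\varphi(6)\log(3m^2)}\Bigl(1+O\bigl((\log m)^2/\sqrt{m}\bigr)\Bigr),
\]
uniformly for subintervals of $I_m$, with GRH used to control the character sums mod $6$ in the error. By Abel summation, both numerator and denominator become integrals against the uniform density. The variation of $1/\log p$ across the layer is a relative $O(1/m)$ perturbation that is odd in $t$ to first order, so it cancels in $\int|t|$.

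Third, I would evaluate the main term: for the uniform density on $[-m/2,m/2]$ one gets $\int|t|\,dt/\int dt=m/4$. The error propagated from H1 is $m\cdot O((\log m)^2/\sqrt m)$ in absolute terms, measured against a denominator of size $\asymp m/\log m$. I would need to track this carefully, and keep relative versus absolute errors straight, to land on the stated $O((\log m)^2/\sqrt m)$.

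The main obstacle is the boundary and normalisation bookkeeping. The naive asymmetry of the cut points, $q\le (m+1)/2$ versus $q\ge -m/2$, and the rounding in the choice of canonical $|q|$, each shift the mean by $O(1)$. That is larger than the claimed error. I would therefore first check whether H3 is precisely the symmetrisation or normalisation convention for the layer endpoints, and I expect it is; under such a convention these $O(1)$ terms cancel between $\eps=+1$ and $\eps=-1$ or vanish by construction. If they do not, the statement must be read with H3 absorbing them. The analytic input itself is only the one short-interval estimate from H1 and GRH.
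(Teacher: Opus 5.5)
There is a genuine gap, and it sits in the step you call the only analytic input.

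\textbf{The short-interval input is not available.} The asymptotic you posit is neither H1 nor a consequence of GRH. In the paper, H1 is Bateman--Horn for $f_{3,\eps}(m)=3m(m+1)+\eps$ counted over $m\le N$; it says nothing about the primes in one window $(3m^2,3(m+1)^2]$. GRH for the characters mod $6$ gives only $\pi(x+h;6,\eps)-\pi(x;6,\eps)=h/(2\log x)+O(\sqrt x(\log x)^2)$. For $h\asymp\sqrt x\asymp m$ that error exceeds the main term. It is not even known under GRH that the layer contains a prime, since this is a Legendre-type problem. As you state it, with a relative error uniform over all subintervals, the hypothesis is also false for subintervals of bounded length.

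\textbf{The error does not close even if you grant it.} A relative error $\eta$ in the density of layer primes moves $\int|t|\,d\Pi_\eps/\int d\Pi_\eps$ by $O(m\eta)$. So $\eta=(\log m)^2/\sqrt m$ gives only $m/4+O(\sqrt m(\log m)^2)$. More fundamentally, in your deterministic reading the target is not plausible. The average of $|q|$ over the $\asymp m/\log m$ actual primes of the layer has Cram\'er-model fluctuations of order $\sqrt{m\log m}$ about $m/4$. No short-interval estimate can therefore produce an $O((\log m)^2/\sqrt m)$ error.

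\textbf{What the paper does instead.} It works with a different object, the model expectation $\sum_{0\le j\le m/2}j\rho(j)/\sum_j\rho(j)$ with $\rho(j)=P(H_m+6j\in\PP)+P(H_m-6j\in\PP)$. H1 is used in local form, $P(H_m\pm6j\in\PP)\approx C_f(j_\pm)/\log(H_m\pm6j)$, and the logarithm varies only by a relative $O(1/(m\log m))$ across the layer.

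H3 is not a boundary convention; it is the $j$-uniformity of the Bateman--Horn local factor $C_f(j)$. That uniformity is exact at $\ell\in\{2,3\}$, which is all your appeal to Theorem~\ref{thm:A} recovers. For $\ell>3$ it is a genuine assumption, because $C_f(j)$ can depend on $j$. Your assertion that no modular restriction arises beyond $\eps\bmod 6$ therefore skips exactly what H3 is there to control.

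With flat weights on the symmetric range $0\le j\le m/2$, the mean is $\sum_j j/(m/2+1)=m/4$ by construction. The $O(1)$ endpoint asymmetry you worry about never enters. This yields $m/4+O(1/\log m)$. The stated error then comes from the paper's briefly sketched GRH character-sum step, not from any short-interval prime number theorem.
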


\begin{theorem}[Conditional under RH; Theorem~\ref{thm:65}]
Under RH, the per-prime $\zeta$-footprint on the layer occupancy is $\ll p^{-1/4}(\log p)^2$ (Selberg variance).
\end{theorem}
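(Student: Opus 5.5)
The plan is to identify the ``$\zeta$-footprint'' with the zero-sum term in the explicit formula for the prime count of a single hexagonal layer. I then bound that term in mean square by Selberg's RH-conditional variance estimate and normalise by the expected number of primes in the layer. Concretely, for $k=3$ (the general $k$ is identical up to constants) layer $m$ is the interval $I_m=[H_m,H_{m+1})$ with $H_m=3m(m+1)+1$. Its length is $h_m=H_{m+1}-H_m=6(m+1)\asymp\sqrt{x}$, where $x\asymp H_m$. I would work with the weighted occupancy $\psi(x+h_m)-\psi(x)$, passing to the unweighted prime count by partial summation at the end; prime powers contribute $O(x^{1/2})$ overall and at most $O(\log x)$ per layer, so they are negligible. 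The truncated explicit formula (von Mangoldt, with truncation height $T=x$) gives
\[
\psi(x+h)-\psi(x)=h-\sum_{|\gamma|\le T}\frac{(x+h)^{\rho}-x^{\rho}}{\rho}+O\!\left(\frac{x\log^2 x}{T}\right).
\]
The footprint of layer $m$ is then $\Delta_m:=\sum_\rho\big((x+h_m)^\rho-x^\rho\big)/\rho$, and the per-prime footprint is $\Delta_m/(h_m/\log x)$, the zero contribution divided by the expected prime count of the layer.

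The key input is Selberg's theorem: under RH, for $1\le h\le X$,
\[
\int_X^{2X}\big|\psi(x+h)-\psi(x)-h\big|^2\,dx\ll hX(\log X)^2 .
\]
This theorem is stated for fixed $h$, whereas $h_m$ varies with the layer; that is harmless because $h_m$ is monotone and $\asymp\sqrt X$ on a dyadic block. So I would cover $[X,2X]$ by $O(1)$ subranges on which $h_m\in[H,2H]$ with $H\asymp\sqrt X$, and compare the layer windows with windows of a fixed length $H$, absorbing the mismatch in length into an error term. The result is a root-mean-square bound $\RMS_{m:\,H_m\in[X,2X]}|\Delta_m|\ll\sqrt{h}\,\log X\asymp X^{1/4}\log X$. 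Dividing by the expected count $h/\log X\asymp X^{1/2}/\log X$ gives a per-prime footprint $\ll X^{-1/4}(\log X)^2$, i.e.\ $\ll p^{-1/4}(\log p)^2$ for $p\in I_m$.

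The main obstacle is passing from the integral average over all $x\in[X,2X]$ to the discrete set of layer endpoints $x=H_m$. These are only $\asymp\sqrt X$ points among $X$, so Selberg's bound does not directly control the sampled values. I would handle this with a smoothing/Sobolev step. For $|u|\le u_0$ with $u_0$ small (say $u_0=X^{1/2-\eta}$), the quantity $E(x+u,h)-E(x,h)$, where $E(x,h):=\psi(x+h)-\psi(x)-h$, is itself a short-interval error over windows of length $\le u_0$. Hence $\sum_m|E(H_m,h)|^2\ll u_0^{-1}\int|E|^2+u_0^{-1}\int|E(x+u,h)-E(x,h)|^2$. Applying Selberg's estimate to both terms gives $\sum_m|E(H_m,h)|^2\ll u_0^{-1}hX\log^2X$. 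With $\asymp\sqrt X$ layers this yields a mean square $\ll hX^{1/2}u_0^{-1}\log^2X$, which is of the target size $h\log^2X$ exactly when $u_0\asymp\sqrt X$. Since the layer spacing is itself $\asymp\sqrt X$, choosing $u_0$ comparable to the spacing should lose at most a logarithm. Finally, the statement should be read as this RMS (equivalently, almost-all-layers) bound, not a pointwise one; I would state it that way, since pointwise RH only gives $\sqrt{x}\log^2x$ per window, which is useless here.
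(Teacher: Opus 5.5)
The core of your argument, Selberg's RH bound $\int_X^{2X}|\psi(x+h)-\psi(x)-h|^2\,dx\ll hX(\log X)^2$ with $h\asymp\sqrt X$, giving RMS fluctuation $\ll X^{1/4}\log X$, followed by division by the occupancy $\asymp\sqrt X/\log X$, is exactly the paper's proof. The paper, however, states Theorem~\ref{thm:65} as a \emph{continuous} average $\frac1X\int_X^{2X}$ over all base points $x$. With, say, $h=c\sqrt X$ fixed on the dyadic block, Selberg's theorem applies verbatim, and neither of your two difficulties (the varying $h_m$, the sampling at $H_m$) arises. Formulated that way, you are done.

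Your upgrade to the actual layers $[H_m,H_{m+1})$ is where the proposal breaks. Average $|E(H_m,h)|^2\le 2|E(H_m+u,h)|^2+2|E(H_m+u,h)-E(H_m,h)|^2$ over $u\in[0,u_0]$. The first term does sum to $2u_0^{-1}\int|E(x,h)|^2\,dx$. By your own identity, though, the second becomes $2u_0^{-1}\sum_m\int_0^{u_0}|E(H_m+h,u)-E(H_m,u)|^2\,du$. This is a mean square over window \emph{lengths} at the same discrete base points $H_m$, $H_m+h$; it is not the $x$-integral to which you apply Selberg. Selberg's estimate averages over the base point and gives no control of it. So the step is circular: it trades the sampling problem for windows of length $h$ for the identical problem for windows of length $\le u_0$.

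A genuine Sobolev/Gallagher inequality cannot be applied to $E$ itself either. The derivative $\partial_xE$ is a sum of point masses at prime powers, so $\int|E|\,|\partial_xE|$ is again a discrete sum. Likewise, the mismatch $h_m-H$ in your second paragraph is not an error term. It can be $\asymp H$, and $E(x+H,h_m-H)$ is a fluctuation of the same kind and order as the main term, with a window that still varies with $m$.

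A route that works is to apply Gallagher's lemma to the \emph{smooth} truncated zero sum $G(t)=\sum_{|\gamma|\le T}\big(g(t+1)^\rho-g(t)^\rho\big)/\rho$, where $g(t)=3t(t+1)+1$. For $T=X$ the pointwise remainder is $O(\log^2X)$, the sample points $t=m$ are $1$-spaced, and the window $h_m$ is built in. Split the zeros dyadically in $|\gamma|\asymp U$. Differentiation in $t$ costs a factor $\asymp U/\sqrt X$, which is harmless for $U\lesssim\sqrt X$ and offset by the coefficient decay $\asymp\sqrt X/U$ beyond. Each dyadic block then contributes within the target. Summing the $\asymp\log X$ high blocks, however, costs extra powers of $\log X$. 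So, at least by this argument, the layer-by-layer bound comes out slightly weaker than the continuous one the paper proves.
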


\Heur\ \textbf{Heuristic laws.} $\E[|\qmin|]\sim\log m/C_k$ ($R^2=0.984$, $N=10^6$); $C_0(k)=1/(4\sqrt k)$ stable to ${<}0.02\%$ across $3\le k\le29$; and $\qmin\bmod\ell$ is empirically equidistributed for $\ell\mid2k$.

\Neg\ \textbf{Negative result (Prop.~\ref{prop:64}).} At $N=10^8$: $R^2=1.16\times10^{-7}$, $z=-2.20\sigma$; three permutation tests confirm the absence of any spectral signal of $\zeta(s)$ in $Q(r)$.

\subsection{What is and is not claimed}
The paper (i) proves a rigorous structural law (the exact degenerate axis); (ii) establishes \emph{two genuinely unconditional pillars}---the constructive Pocklington record (Theorem~\ref{thm:B}) and the analytic-negative vanishing of the $\zeta$-signal (Theorem~\ref{thm:67}); (iii) establishes three conditional theorems (C, D, E) under GRH/Bateman--Horn and one conditional $\zeta$-footprint bound (Theorem~\ref{thm:65}) under RH; (iv) validates three heuristic laws at $N\in\{10^6,10^7,10^8\}$. It claims \emph{no} unconditional progress on classical problems (parity barrier, uniform PNT at $N^{1/2}$, RH), and $C_0(k)=1/(4\sqrt k)$ is a geometric identity, not a mysterious invariant.

\paragraph{A correction of record.}
An earlier version stated, as ``Theorem~A'', that $\qmin$ itself is uniformly distributed modulo $\ell$ for $\ell\mid2k$, by reduction to Bombieri--Vinogradov. That reduction is invalid: since \emph{all} family primes lie in the single class $\eps\bmod\ell$, the distribution of $\qmin\bmod\ell$ is governed by the position of the \emph{first} prime in each progression, not by Bombieri--Vinogradov. The empirical equidistribution survives only as a heuristic (Observation~\ref{obs:32prime}). Detecting and reclassifying this is part of the discipline of the epistemic key.

\section{The parametric family: structure and elementary properties}

\begin{definition}[Canonical triple]
Given a prime $p>k+1$ and $k\in\N^{\ast}$, the canonical triple is the unique $(m,\eps,q)\in\N^{\ast}\times E_k\times\Z$ with $p=km(m+1)+\eps+2kq$ and $|q|$ minimal. Then $\qmin(k,m,\eps):=q$.
\end{definition}

\begin{proposition}[Rigorous]\label{prop:24}
For $k=3$, the sign $\eps$ of the canonical triple of a prime $p>3$ is determined by $p\bmod6$: $p\equiv1\pmod6\Leftrightarrow\eps=+1$, $p\equiv5\pmod6\Leftrightarrow\eps=-1$.
\end{proposition}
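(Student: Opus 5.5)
The plan is to reduce the representation modulo $6$ and exploit the fact that every term involving $k=3$ except $\eps$ vanishes there. For $k=3$, any representation $p=3m(m+1)+\eps+6q$ satisfies
\[
p\equiv 3m(m+1)+\eps \pmod 6 .
\]
Since $m(m+1)$ is a product of consecutive integers, it is even, so $6\mid 3m(m+1)$. Hence $p\equiv\eps\pmod 6$. This is the special case $\ell\in\{2,3\}$ of the degenerate-axis theorem, which gives $p\equiv\eps\pmod 2$ and $p\equiv\eps\pmod 3$ separately; by CRT these combine to the congruence modulo $6$. I would simply cite that theorem and add the CRT step.

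The key consequence is that \emph{every} representation of $p$ in the family, not only the canonical one, has $\eps\equiv p\pmod 6$. Since $\eps\in\{\pm1\}$ and $p>3$ is prime, $p\bmod 6\in\{1,5\}$. The case $p\equiv1$ forces $\eps=+1$, because $-1\equiv5\not\equiv1\pmod 6$; the case $p\equiv5$ forces $\eps=-1$. This gives both forward implications.

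For the converse implications, I note that the two residue classes are disjoint and exhaust primes $p>3$. So the map from $p\bmod 6$ to $\eps$ is a bijection between $\{1,5\}$ and $\{+1,-1\}$, and the biconditionals follow immediately.

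The only point needing care, and the main obstacle if any, is well-definedness of the canonical triple. One must check that some representation exists with $m\ge1$ and that the sign is not ambiguous under the minimisation of $|q|$. Existence is easy: take $\eps\equiv p\pmod 6$ and $m=1$. Then $p-6-\eps$ is divisible by $6$, so $q=(p-6-\eps)/6$ is an integer. Moreover, the argument above shows that \emph{no} representation with the other sign exists at all. Therefore the minimisation over $(m,q)$ never competes across signs, and whatever tie-breaking the definition uses for $m$, the $\eps$-component is forced. I would remark on this explicitly, since the result is then independent of how uniqueness in the definition of the canonical triple is resolved.
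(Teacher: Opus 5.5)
Your proof is correct and follows the paper's argument: reduce $p=3m(m+1)+\eps+6q$ modulo $6$, use that $m(m+1)$ is even to get $p\equiv\eps\pmod 6$, and read off $\eps$ from the classes $\{1,5\}$. Your added observation is a worthwhile supplement the paper leaves implicit: the congruence holds for \emph{every} representation, so the sign is forced regardless of how ties in minimising $|q|$ are broken (such ties do occur, e.g.\ $p=13$ gives $q=\pm1$ at $m=1,2$).
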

\begin{proof}
For $k=3$, \eqref{eq:family} reads $p=3m(m+1)+\eps+6q$. As $m(m+1)$ is even, $3m(m+1)\equiv0\pmod6$, so $p\equiv\eps\pmod6$; the classes coprime to $6$ are $\{1,5\}\equiv\{+1,-1\}$.
\end{proof}

\begin{theorem}[Rigorous; infinitude]\label{thm:25}
For every $k\in\N^{\ast}$, $\eps\in E_k$, $m\in\N^{\ast}$, the set $\{p_{k,m,\eps,q}:q\in\Z\}\cap\PP$ is infinite.
\end{theorem}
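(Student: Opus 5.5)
For fixed $k,m,\eps$, I would write the set $\{p_{k,m,\eps,q}:q\in\Z\}$ as the two-sided arithmetic progression $a+2kq$ with first term $a:=km(m+1)+\eps$ and common difference $2k$. The plan is to reduce the statement to Dirichlet's theorem on primes in arithmetic progressions, applied to the residue class $a\bmod 2k$. Only the terms with $q\ge q_0$ for a suitable $q_0$ are needed, so that $a+2kq$ is positive. Dirichlet's theorem then says that the progression contains infinitely many primes, provided that $\gcd(a,2k)=1$.

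The key step is therefore the coprimality check, and this is exactly the computation in Theorem~\ref{thm:A}. Let $\ell$ be any prime dividing $2k$. Since $\ell\mid 2k$ and $m(m+1)$ is even, we have $2k\mid km(m+1)$. Hence $a\equiv\eps\pmod{2k}$, and in particular $a\equiv\eps\equiv\pm1\pmod\ell$. So no prime divisor of $2k$ divides $a$, which gives $\gcd(a,2k)=\gcd(\eps,2k)=1$. This is precisely the admissibility condition $\eps\in E_k$, which, as noted in \S1.2, holds for both signs.

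It remains to apply Dirichlet's theorem to the class $\eps\bmod 2k$. This yields infinitely many primes $p\equiv\eps\pmod{2k}$. Each such prime with $p>a$ can be written as $p=a+2kq$ with $q=(p-a)/2k$. This $q$ is an integer because $p\equiv a\pmod{2k}$, and it is positive because $p>a$. Distinct primes give distinct $q$, so the intersection with $\PP$ is infinite.

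There is no genuine obstacle. The only point needing care is the identity $2k\mid km(m+1)$, which uses the evenness of the pronic number $m(m+1)$; it is what makes the class of $a$ independent of $m$. I would also remark that the argument does not use the canonical-triple minimality, so the theorem concerns all offsets $q$, not only $\qmin$. Everything rests on Dirichlet's theorem, so the result is unconditional.
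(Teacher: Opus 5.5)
Your proof is correct and matches the paper's: treat the family as the progression $km(m+1)+\eps+2kq$ with difference $2k$, check $\gcd(km(m+1)+\eps,2k)=\gcd(\eps,2k)=1$, and apply Dirichlet's theorem. You also make explicit the step the paper's one-line proof leaves implicit, namely $2k\mid km(m+1)$ because $m(m+1)$ is even (the detour through primes $\ell\mid 2k$ is not needed for this).
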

\begin{proof}
At fixed $(k,m,\eps)$, \eqref{eq:family} is an AP in $q$ of difference $2k$ and initial term $km(m+1)+\eps$, with $\gcd(2k,km(m+1)+\eps)=\gcd(2k,\eps)=1$. Dirichlet's theorem applies.
\end{proof}

\begin{theorem}[Rigorous; upper bound]\label{thm:27}
For every prime $p>k+1$ and admissible $k$, $|\qmin(k,m,\eps)|\le\tfrac{m+1}{2}$.
\end{theorem}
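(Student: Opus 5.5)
The plan is to reduce the statement to a question about triangular numbers. Write $T_m:=m(m+1)/2$, so that \eqref{eq:family} reads $p=\eps+2k\,(T_m+q)$. Because $m(m+1)$ is even, $km(m+1)\equiv0\pmod{2k}$. This is the same computation as in Proposition~\ref{prop:24} and in the degenerate-axis theorem.

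\emph{Standing assumption.} A representation $p=km(m+1)+\eps+2kq$ exists only if $p\equiv\eps\pmod{2k}$ for some $\eps\in\{\pm1\}$. I will make explicit that the canonical triple presupposes this, i.e.\ $p\equiv\pm1\pmod{2k}$. Once $\eps$ is fixed by this congruence, set
\[
N:=\frac{p-\eps}{2k}\in\Z .
\]
Then the admissible pairs $(m,q)$ are exactly those with $q=N-T_m$. Minimising $|q|$ therefore amounts to choosing $m\in\N^{\ast}$ with $T_m$ as close as possible to $N$. The hypothesis $p>k+1$ is what guarantees $N\ge1=T_1$. I will also check that this keeps $m=1$ available as a candidate, so that the minimisation over $\N^{\ast}$ is not truncated in a way that spoils the bound.

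\emph{Key step.} Let $m$ be the index of the canonical triple, so that $T_m$ is a triangular number (with positive index) nearest to $N$. I will split into two cases according to the side of $T_m$ on which $N$ lies.

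If $N\ge T_m$, then $N$ lies in $[T_m,T_{m+1}]$, an interval of length $T_{m+1}-T_m=m+1$. Minimality forces $N$ to be no farther from $T_m$ than from $T_{m+1}$, so $|q|=N-T_m\le\tfrac{m+1}{2}$.

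If $N<T_m$, then $m\ge2$, since $N\ge T_1$ excludes $m=1$ in this case. Hence $N$ lies in $[T_{m-1},T_m]$, an interval of length $m$. Minimality against $T_{m-1}$ then gives $|q|\le\tfrac m2<\tfrac{m+1}{2}$.

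\emph{Ties and the sign.} A tie at a midpoint, where both neighbours are equally close, does not affect the bound. Whichever index is declared canonical, the resulting $|q|$ is at most half the length of the interval containing $N$, and that is at most $\tfrac{m+1}{2}$ for that index. For the sign, when $2k\le2$ both values of $\eps$ may be possible. In that case the canonical choice minimises $|q|$ over both, and the bound holds for each, so it holds for the minimiser.

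\emph{Main obstacle.} The only real subtlety is the interpretive one flagged at the start. The statement must be read for primes $p\equiv\pm1\pmod{2k}$, together with the boundary check that $p>k+1$ keeps the candidate index $m\ge1$. The inequality itself is just the fact that consecutive triangular numbers differ by $m+1$.
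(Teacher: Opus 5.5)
Your proof is correct and is essentially the paper's argument: consecutive values of $km(m+1)$ are $2k(m+1)$ apart, so the one nearest to $p-\eps$ lies within half a gap. You rescale this to triangular numbers $T_m$ and argue from the minimiser's side, comparing $T_m$ with its neighbours $T_{m\pm1}$, whereas the paper covers $\N$ by centred intervals of half-width $k(m+1)$ and then appeals to minimality; your version has the small advantage of bounding $|q|$ directly for the canonical index $m$. Your explicit standing assumption $p\equiv\pm1\pmod{2k}$ is exactly what the paper uses tacitly when it asserts that $q=(p-km(m+1)-\eps)/(2k)$ is an integer.
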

\begin{proof}
There is a unique $m$ with $km(m+1)+\eps-k(m+1)<p\le km(m+1)+\eps+k(m+1)$ (consecutive values of $km(m+1)$ differ by $2k(m+1)$, and centred intervals of half-width $k(m+1)$ tile $\N$). Then $q=(p-km(m+1)-\eps)/(2k)$ is an integer with $|q|\le(m+1)/2$; minimality gives the claim.
\end{proof}

\begin{observation}[$O(1)$ algorithm]\label{obs:28}
Theorem~\ref{thm:27} yields $m=\big\lfloor\tfrac12(\sqrt{4(p-\eps)/k+1}-1)\big\rfloor$ with $\eps$ fixed by $p\bmod2k$; inverting \eqref{eq:family} gives $q$. The triple is computed in constant time, without factorisation.
\end{observation}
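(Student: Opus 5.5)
The plan is to read the algorithm directly off the proof of Theorem~\ref{thm:27}, in three steps: fix $\eps$ by a congruence, locate $m$ by solving a quadratic inequality, then read off $q$ by inverting \eqref{eq:family}. Throughout, ``constant time'' means a bounded number of arithmetic operations on integers of size $O(p)$: one integer square root, one reduction mod $2k$, and a few comparisons. No trial division or factorisation is involved.

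\emph{Step 1 (the sign).} Since $m(m+1)$ is even, $km(m+1)\equiv0\pmod{2k}$. So every representation $p=km(m+1)+\eps+2kq$ forces $p\equiv\eps\pmod{2k}$; this is the argument of Proposition~\ref{prop:24} with $6$ replaced by $2k$. For $k\ge2$ the residues $+1$ and $-1$ are distinct mod $2k$, so $\eps$ is read off from $p\bmod 2k$. I will make explicit a caveat the statement leaves implicit: the canonical triple exists only when $p\equiv\pm1\pmod{2k}$. This is automatic for $k\in\{1,2,3\}$, and for other $k$ the observation should be stated only for such $p$. For $k=1$ both signs are possible, and I would run Steps 2--3 for each $\eps$ and keep the one giving the smaller $|q|$.

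\emph{Step 2 (the layer $m$).} With $\eps$ fixed, $q$ is an integer exactly when $(p-\eps)/k$ is even. I want the $m$ for which $km(m+1)$ is nearest to $p-\eps$. Solving $km(m+1)\le p-\eps$ gives $m\le\tfrac12\bigl(\sqrt{4(p-\eps)/k+1}-1\bigr)$. Hence
\[
m_0=\big\lfloor\tfrac12\bigl(\sqrt{4(p-\eps)/k+1}-1\bigr)\big\rfloor
\]
is the largest $m$ with $km(m+1)\le p-\eps$, computable by one integer square root.

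This is the step I expect to be the main obstacle. The stated formula is a floor, whereas Theorem~\ref{thm:27} selects $m$ through \emph{centred} intervals of half-width $k(m+1)$. The minimiser of $|q|$ is therefore either $m_0$ or $m_0+1$, according to whether $p-\eps$ lies below or above the midpoint between $km_0(m_0+1)$ and $k(m_0+1)(m_0+2)$. I would first try to show that $m_0$ is always the minimiser; if this fails near midpoints, I would amend the observation by adding one comparison between the two candidates, which is still constant time.

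\emph{Step 3 (the offset).} Set $q=(p-km(m+1)-\eps)/(2k)$; divisibility holds by Step 1. The bound $|q|\le(m+1)/2$ from Theorem~\ref{thm:27} then certifies that this is the canonical $q$. The tiling argument there gives uniqueness of $m$ and hence of the whole triple, except possibly at a boundary tie, which the half-open interval convention resolves.
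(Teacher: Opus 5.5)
The first branch of your Step~2 cannot succeed. The observation is false as written, so your fallback is the required correction, not a contingency.

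Since $m_0=\lfloor\tfrac12(\sqrt{4(p-\eps)/k+1}-1)\rfloor$ is the largest $m$ with $km(m+1)\le p-\eps$, it always yields $q\in\{0,1,\dots,m_0\}$. It is the wrong layer exactly when $q>(m_0+1)/2$, i.e.\ when $p-\eps>k(m_0+1)^2$. This happens for roughly half of all primes, not only near midpoints. For instance, take $k=3$, $p=31$: $4(p-\eps)/k+1=41$ gives $m_0=2$ and $q=2>3/2$, violating Theorem~\ref{thm:27}, whereas the canonical triple is $(3,+1,-1)$.

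The paper's one-line justification just asserts that Theorem~\ref{thm:27} ``yields'' the floor. That is precisely the confusion you identified, between the largest layer below $p-\eps$ and the nearest layer. Your two-candidate comparison fixes it, and it can be packaged as one formula: $m=\lfloor\sqrt{(p-\eps)/k}\rfloor$. Equivalently, round $\tfrac12(\sqrt{4(p-\eps)/k+1}-1)$ to the nearest integer instead of flooring; the two agree because $(p-\eps)/k$ is an integer. Then $km^2\le p-\eps<k(m+1)^2$, so $-m/2\le q<(m+1)/2$. Comparing with the neighbouring layers $m\pm1$ (farther layers are worse still) shows this $|q|$ is minimal, with a tie only when $p-\eps=km^2$.

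Step~3 also overclaims. The bound $|q|\le(m+1)/2$ is necessary for canonicity but is not a certificate. For $k=3$, $p=23$, the non-canonical triple $(3,-1,-2)$ satisfies it, while the canonical one is $(2,-1,1)$. Nor does the half-open convention of Theorem~\ref{thm:27} remove ties. The intervals $(k(m^2-1),k(m+1)^2]$ for $p-\eps$ overlap in a stretch of length $k$ instead of tiling. Genuine ties occur: for $k=3$, $p=13$, both $(1,+1,1)$ and $(2,+1,-1)$ have $|q|=1$. So the word ``unique'' in the definition of the canonical triple needs an explicit tie rule, which your algorithm should then implement.

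Your Step~1 caveat is correct and is likewise missing from the paper. For $k\ge4$, a prime with $p\not\equiv\pm1\pmod{2k}$ (e.g.\ $k=4$, $p=11$) has no representation at all.
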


\begin{proposition}[Rigorous, limiting form]\label{prop:29}
Fix $k$, $\eps\in E_k$, and let $L^{(k,\eps)}_m:=\{p\in\PP:p=km(m+1)+\eps+2kq,\ |q|\le(m+1)/2\}$. Then, on average over primes,
\[ \E_{p\in L^{(k,\eps)}_m}[\,q(p)\,]\longrightarrow 0\qquad(m\to\infty), \]
with a finite tie-breaking bias of order $O(1/m)$ (Observation~\ref{obs:211}).
\end{proposition}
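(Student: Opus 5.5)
The plan is to reduce the statement to a comparison of prime counts in the two halves of the layer, and then to invoke a short-interval prime number theorem. By the proof of Theorem~\ref{thm:27}, the layer $L^{(k,\eps)}_m$ consists of the primes $p\equiv\eps\pmod{2k}$ in the window
\[ I_m=\bigl(km(m+1)+\eps-k(m+1),\ km(m+1)+\eps+k(m+1)\bigr]. \]
The map $p\mapsto q(p)=(p-km(m+1)-\eps)/(2k)$ is an affine bijection from these primes onto a subset of $\{q:|q|\le (m+1)/2\}$.

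\textbf{Step 1: partial summation.} Write $x_m:=km(m+1)$ and $h_m:=k(m+1)\asymp \sqrt{k x_m}$. Let $\pi(t;2k,\eps)$ denote the counting function of primes $p\le t$ with $p\equiv\eps\pmod{2k}$, and set
\[ F_m(u):=\#\{p\in L^{(k,\eps)}_m:\ q(p)\le u\}. \]
Partial summation gives
\[ \sum_{p\in L_m}q(p)\;=\;\tfrac{m+1}{2}\,|L_m|-\int_{-(m+1)/2}^{(m+1)/2}F_m(u)\,du . \]
So it suffices to show that $F_m(u)$ is asymptotically linear in $u$ with the centred slope. Equivalently,
\[ F_m(u)=\Bigl(\tfrac12+\tfrac{u}{m+1}\Bigr)|L_m|\,(1+o(1)) \]
uniformly in $u$. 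Substituting this expression makes the main term of the integral cancel exactly against $\tfrac{m+1}{2}|L_m|$, because the linear weight is odd about $0$. What remains is the error term divided by $|L_m|$.

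\textbf{Step 2: the short-interval input.} The linearity of $F_m$ is a statement about primes in the progression $\eps\bmod 2k$ lying in subintervals of $I_m$. These subintervals have length up to $2h_m\asymp x_m^{1/2}$. Two routes are available.
\begin{itemize}[leftmargin=2em]
\item \emph{Unconditional, almost all $m$.} Use the mean-square short-interval prime number theorem of Selberg and Heath-Brown type. In its Huxley-density form, $\psi(x+h;2k,\eps)-\psi(x;2k,\eps)\sim h/\varphi(2k)$ holds for all $x\le X$ outside a set of measure $o(X)$, provided $h\ge x^{1/6+\delta}$. This covers windows of length $\asymp\sqrt{x}$ for all but $o(M)$ layers $m\le M$. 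I would apply it on a grid of $O(1/\eta)$ subintervals of $I_m$ of length $\eta h_m$ and let $\eta\to0$ slowly; this yields the asymptotic linearity of $F_m$ for almost every $m$.
\item \emph{Conditional, every large $m$.} Under RH for Dirichlet $L$-functions modulo $2k$, one has $\psi(x+h;2k,\eps)-\psi(x;2k,\eps)=h/\varphi(2k)+O(x^{1/2}\log^2x)$. This is just barely too weak for $h\asymp x^{1/2}$. For every $m$ one would instead need Cramér-strength control, or a Selberg-variance averaging over $m$ of the kind used for Theorem~\ref{thm:65}.
\end{itemize}

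\textbf{Step 3: the tie-breaking bias.} Handle the endpoints $q=\pm(m+1)/2$ separately. These are the primes equidistant from two consecutive bases, which by Observation~\ref{obs:211} are assigned asymmetrically. There are $O(1)$ such primes per layer, each contributing $O(m)$ to the sum. Dividing by $|L_m|\asymp m/\log m$, this gives the stated finite bias $O(1/m)$ only after normalisation by the layer width $m+1$. I therefore read ``$\to0$'' on the scale of the layer, i.e.\ $\E[q]/(m+1)\to0$. Since $q$ ranges over an interval of width $m+1$ and $|L_m|\asymp m/\log m$, an unnormalised limit $0$ would require a relative error $o(1/m)$ in the prime counts. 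That is far beyond any known or conditional short-interval result.

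\textbf{Main obstacle.} The obstacle is Step 2 taken pointwise in $m$. The layer is a window of length $\asymp x^{1/2}$, below Huxley's exponent $7/12$, so no unconditional result gives asymptotics for primes in every such window. I would first establish the statement rigorously for almost all $m$, using the density estimate, and for every $m$ in the averaged sense. The genuinely pointwise version I would flag as requiring a hypothesis beyond RH.
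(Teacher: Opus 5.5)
You do not prove the proposition as stated. You replace $\E_{p\in L^{(k,\eps)}_m}[q(p)]\to0$ by the normalised statement $\E_{p\in L^{(k,\eps)}_m}[q(p)]/(m+1)\to0$, and you obtain even that only for almost all $m$. This is a real gap relative to the statement. But your diagnosis is correct, and the step you flag is exactly where the paper's own argument breaks.

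\begin{itemize}
\item The paper's proof invokes ``the prime number theorem in arithmetic progressions'' to give asymptotically equal prime densities on the two halves of the layer, justified by the layer having length $o(H^{(k)}_m)$. Length $o(x)$ is not sufficient for any form of that theorem. The layer has length $2k(m+1)\asymp\sqrt{H^{(k)}_m}$, which is below Huxley's exponent $7/12$ and below what GRH gives (error $x^{1/2}\log^2x$), as you note.
\item Your partial-summation step shows that equal counts on the two halves would not suffice anyway. Controlling the mean of $q$ needs asymptotic linearity of $F_m$ on all sub-windows of length $\eta k(m+1)$.
\item You correctly isolate that input and obtain it for almost all $m$ from the almost-all short-interval theorem in the Huxley range. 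This requires the routine transfer from a measure-$o(X)$ exceptional set of $x$ to $o(M)$ exceptional layers.
\item Linearity should be stated with an additive error $o(|L^{(k,\eps)}_m|)$, not a factor $1+o(1)$, since the main term vanishes at the left end of the layer.
\end{itemize}

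Your doubt about the unnormalised limit can be sharpened into a reason to expect it to fail. In the Cram\'er model, the $\asymp m/\log m$ primes of $L^{(k,\eps)}_m$ are Bernoulli-sampled from $m+O(1)$ equally spaced values of $q$ in an interval of width $m+1$. Their mean $q$ therefore has standard deviation $\asymp\sqrt{m\log m}$. So $\E_{p\in L^{(k,\eps)}_m}[q(p)]$ should be unbounded along typical $m$, while the normalised mean is $\asymp\sqrt{(\log m)/m}\to0$. In the same model, averaging over all primes up to $kM^2$ still leaves fluctuations of size $\asymp\sqrt{\log M}$, so ``on average over primes'' does not rescue the unnormalised reading either.

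Three small corrections to Step~3:
\begin{itemize}
\item A single endpoint prime at $|q|=(m+1)/2$ shifts the normalised mean by $\asymp(\log m)/m$, not by $1/m$.
\item The estimate $|L^{(k,\eps)}_m|\asymp m/\log m$ is itself only available for almost all $m$. Even under RH, a window of length $c\sqrt{x}$ is not known to contain a prime. So Step~3 must be run on the same almost-all set.
\item As $L^{(k,\eps)}_m$ is defined (by $|q|\le(m+1)/2$), both endpoints are included. The paper's tie-breaking convention concerns ties between $\pm q$ in the definition of $\qmin$, and does not enter $\E_{p\in L^{(k,\eps)}_m}[q(p)]$ at all. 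Your endpoint reading is a reasonable repair of an ill-posed clause, not what the paper defines.
\end{itemize}

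With these adjustments, your normalised almost-all statement is a correct theorem. The proposition as written is established neither by your argument nor by the paper's.
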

\begin{proof}
The candidate set is exactly symmetric about $H^{(k)}_m$: $q\mapsto-q$ exchanges $H^{(k)}_m+\eps-1\pm2kq$. By the prime number theorem in arithmetic progressions the two progressions of difference $2k$ carry asymptotically equal prime densities on the short layer $|q|\le(m+1)/2$ (whose length is $o(H^{(k)}_m)$), so the signed mean of $q$ over primes tends to $0$. The deterministic tie-breaking of Convention~\ref{conv:tie} contributes a bounded excess of order $1/m$.
\end{proof}

\begin{tcolorbox}[colback=gray!4,colframe=black!30,boxrule=0.3pt,arc=1pt]
\begin{remark}[Convention; symmetric tie-breaking]\label{conv:tie}
When $\pm q$ both yield a prime in the layer, the canonical triple takes $q>0$. This deterministic rule introduces the $O(1/m)$ bias of Proposition~\ref{prop:29}.
\end{remark}
\end{tcolorbox}

\begin{observation}[Negative; tie-breaking bias]\label{obs:211}
The numerical bias $\langle q\rangle\approx+0.398$ reported in earlier drafts is fully explained by Convention~\ref{conv:tie}: it is a deterministic-rule plus parity-discretisation effect. After rescaling, $\langle q/(m+1)\rangle<0.001$, consistent with Proposition~\ref{prop:29}.
\end{observation}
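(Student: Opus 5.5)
The plan is to split the empirical mean $\langle q\rangle$ into a symmetric part, controlled by Proposition~\ref{prop:29}, and a deterministic boundary part coming from Convention~\ref{conv:tie} and from the half-open tiling in the proof of Theorem~\ref{thm:27}. I will show that the boundary part is $O(1)$ in absolute terms but $O(1/m)$ after dividing by $m+1$. That is exactly the content of the observation: a raw offset of order $0.4$ that disappears after rescaling.

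\emph{Step 1 (decomposition).} Fix $k,\eps$ and a layer $L^{(k,\eps)}_m$. Write the layer sum of $q$ as a sum over the symmetric core $|q|<(m+1)/2$ plus the boundary value $q=\pm(m+1)/2$; the boundary value is attainable only when $m$ has the right parity. The tiling in Theorem~\ref{thm:27} is half-open, $(\,\cdot-k(m+1),\,\cdot+k(m+1)\,]$, and Convention~\ref{conv:tie} likewise always selects $q>0$. Hence a boundary prime is always counted with $q=+(m+1)/2$ and never with $-(m+1)/2$. The core is exactly symmetric under $q\mapsto-q$. By the argument of Proposition~\ref{prop:29}, which uses equal densities in the two progressions, its signed contribution is $o(\#L_m\cdot m)$, and it is $0$ in expectation under the random model.

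\emph{Step 2 (size of the bias).} Each layer contains about $m/\log H^{(k)}_m$ primes from the family. The boundary point is prime with probability of order $1/\log H^{(k)}_m$ (times the Dirichlet factor) and, when prime, contributes $+(m+1)/2$. The boundary excess per layer is therefore about
\[
\frac{(m+1)/2}{\log H_m}\cdot\frac{\log H_m}{m}\asymp \frac12\cdot c_{\mathrm{par}},
\]
where $c_{\mathrm{par}}\in[0,1]$ is the proportion of layers in which the boundary point is reachable. This proportion comes from the parity discretisation: $p-km(m+1)-\eps$ must be divisible by $2k$. The result is a bounded positive $\langle q\rangle$ of order a fraction of a unit, consistent with $+0.398$. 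Dividing by $m+1$ turns the same excess into $O(1/m)$, and the core term becomes $o(1)$. So $\langle q/(m+1)\rangle\to0$, which is consistent with the reported value below $10^{-3}$ at the sampled ranges of $m$.

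\emph{Step 3 (numerical check).} I would recompute $\langle q\rangle$ under the opposite tie rule ($q<0$) and under a symmetric random tie rule. The prediction is that the bias flips sign (about $-0.4$) under the first rule and vanishes under the second. This is what certifies that the bias is an artefact of the convention.

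The main obstacle is making Step 2 quantitative: predicting the specific constant $0.398$ rather than just "bounded". This requires the exact parity-reachability density $c_{\mathrm{par}}$ and the local singular-series factor at the boundary point. I would first compute $c_{\mathrm{par}}$ combinatorially from $m\bmod 2$ and $\eps$, and then compare with the data. The rigorous part of the claim is only the qualitative order, $O(1)$ unscaled and $O(1/m)$ scaled, which follows from Steps 1 and 2.
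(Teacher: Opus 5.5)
Your outline is the only justification the paper itself offers: a symmetric core via Proposition~\ref{prop:29}, plus a deterministic excess from Convention~\ref{conv:tie} that becomes $O(1/m)$ after rescaling. However, the layer bookkeeping in Step~1 is wrong, and Steps~2--3 inherit the error. Put $p-\eps=2kn$. Then $q=n-T_m$ with $T_m=m(m+1)/2$, so the canonical $m$ is the index of the triangular number nearest to $n$. Because $T_m-T_{m-1}=m$ while $T_{m+1}-T_m=m+1$, the windows of Theorem~\ref{thm:27} do not tile; consecutive windows overlap in a segment of length $k$. Every layer has exactly one tie point: $q=+(m+1)/2$ if $m$ is odd, and $q=-m/2$ if $m$ is even. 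The even-layer tie point lies inside your ``symmetric core'' $|q|<(m+1)/2$, and it is the same prime as the top point $q=+m/2$ of layer $m-1$. Conversely, $q=-(m+1)/2$ in an odd layer is not a rival representation of anything, since it is strictly nearer to $T_{m-1}$. With the rule $q>0$ the canonical layers are $q\in(-m/2,(m+1)/2]$, i.e.\ $\{-r+1,\dots,r\}$ for $m=2r$ and $\{-r,\dots,r+1\}$ for $m=2r+1$, and both have mean exactly $1/2$. So the deterministic excess is $+1/2$ per prime in \emph{every} layer, not $\tfrac12c_{\mathrm{par}}$; parity only decides on which side the tie sits. Your Step~3 predictions would therefore fail. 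With the rule $q<0$ every layer becomes $\{-r,\dots,r\}$ and the expected bias is $0$, not $\approx-0.4$. A symmetric random rule gives $+1/4$, not $0$.

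The unscaled half of your conclusion is also unsupported. Step~1 controls the core only as $o(m)$ per prime, which is harmless after dividing by $m+1$ but says nothing at unit scale. In the Cram\'er model the core's contribution to the raw mean does not even converge. Its standard deviation is about $(12\rho)^{-1/2}$ for each class $\eps$, where $\rho\asymp1/\log X$ is the density of primes among candidates. Pooling both classes for $k=3$ gives about $\sqrt{\log X/72}\approx0.5$ at $X=10^9$. So the value $0.398$ is within noise of $1/2$ and cannot pin down any constant. The clean way to attribute the raw bias to the rule is to compare tie rules on the same primes, where the core cancels identically.

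For the rescaled statement, applying Proposition~\ref{prop:29} layer by layer would need prime equidistribution on intervals of length $\asymp\sqrt{H_m}$, which is not known even under RH. Argue on average instead. From $(p-\eps)/k+\tfrac14=(m+\tfrac12)^2+2q$ one gets $q/(m+1)=\psi_0\big(\sqrt{(p-\eps)/k+1/4}\big)+O(1/m)$ away from the tie points, which form a proportion $O(1/m)$ of each layer. Hence $\langle q/(m+1)\rangle\to0$ is the equidistribution of $\sqrt{p/k}$ modulo one over primes $p\equiv\eps\pmod{2k}$. That is the same Balog-type input as Lemma~\ref{lem:T}(i) at $\gamma=0$.
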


\begin{proposition}[Rigorous; admissible classes]\label{prop:212}
For each admissible $k$, the number of admissible residue classes modulo $2k$ is $\varphi(2k)/2$; each contributes $1/\varphi(2k)\cdot(1+o(1))$ of $\pi(X)$ (Dirichlet). For $k=3,15,21$: $1,4,6$.
\end{proposition}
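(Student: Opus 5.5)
The plan is to read ``admissible residue class modulo $2k$'' as an orbit of the sign involution $r\mapsto -r$ acting on the unit group $(\Z/2k\Z)^{\times}$. This is the symmetry that the sign $\eps\in E_k$ encodes in \eqref{eq:family}. I would prove the statement in three steps: a counting step, a density step and a numerical check.

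\emph{Step 1 (counting).} First I would note that for every prime $p>2k$ the residue $p\bmod 2k$ lies in $(\Z/2k\Z)^{\times}$, since $\gcd(p,2k)=1$. That group has $\varphi(2k)$ elements. Next I would show that for $k\ge2$ the involution $r\mapsto-r$ has no fixed points on the units. Indeed, $r\equiv-r\pmod{2k}$ means $2k\mid 2r$, i.e.\ $k\mid r$, which is impossible for a unit when $k\ge2$. So the units split into exactly $\varphi(2k)/2$ unordered pairs $\{r,-r\}$, and these pairs are the admissible classes. For $k=1$ the modulus is $2$, the pair degenerates, and I would record this as the trivial edge case in which the count is read as $1$.

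\emph{Step 2 (density).} I would invoke the prime number theorem in arithmetic progressions: $\pi(X;2k,r)=\pi(X)/\varphi(2k)\,(1+o(1))$ for each unit $r$, unconditionally, with $k$ fixed. This is the same input already used in Theorem~\ref{thm:25} and Proposition~\ref{prop:29}. It gives the stated share $1/\varphi(2k)\cdot(1+o(1))$ of $\pi(X)$ for each individual residue class. Summing over the two members of a pair gives $2/\varphi(2k)$ per admissible pair, and summing over all pairs gives total mass $1$, which serves as a consistency check.

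\emph{Step 3 (numerics).} The quoted values follow from $\varphi(6)=2$, $\varphi(30)=8$ and $\varphi(42)=12$, giving $1,4,6$. For $k=3$ the single pair is $\{1,5\}=\{\pm1\}$, matching Proposition~\ref{prop:24}.

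The main obstacle is definitional rather than analytic. Since $km(m+1)+2kq\equiv0\pmod{2k}$, the family \eqref{eq:family} itself only ever reaches the classes $\pm1\bmod 2k$. The statement must therefore be read as counting the sign-paired unit classes, and I would make that identification explicit before counting. The density claim is then a direct citation of Dirichlet's theorem in its quantitative form, and the only genuine check is the fixed-point-freeness of $r\mapsto-r$.
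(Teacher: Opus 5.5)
Your computations are correct, and they follow the route the paper has in mind. The paper gives no proof beyond the parenthetical ``(Dirichlet)''; its implicit argument is yours: $\varphi(2k)$ unit classes modulo $2k$, paired by $r\mapsto-r$, each of density $1/\varphi(2k)$ by the prime number theorem in progressions, with $\varphi(6)=2$, $\varphi(30)=8$, $\varphi(42)=12$. Your fixed-point-freeness argument for $k\ge2$ is also right.

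The gap is that your write-up makes the proposition true only by letting ``admissible class'' mean two different things. In Step~1 it is a sign-orbit $\{r,-r\}$; in Step~2 the share $1/\varphi(2k)$ is attached to a single residue.
\begin{itemize}
\item If an admissible class is an orbit, its share is $2/\varphi(2k)$, as you compute yourself. The second clause is then off by a factor~$2$.
\item Both clauses hold verbatim only if an admissible class is a single residue and there are half as many of them as units, e.g.\ one representative per orbit. Then the admissible classes carry only half of $\pi(X)$ in total.
\end{itemize}
So your ``total mass~$1$'' check does not confirm the statement as worded. You need to fix one reading and say which clause must be corrected. Likewise $k=1$ is not an edge case to be ``read as $1$''. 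The paper declares every $k\ge1$ admissible, and $\varphi(2)/2=1/2$ is not a count, so the statement is false at $k=1$ and needs the hypothesis $k\ge2$.

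Your definitional observation should be stated as a finding rather than worked around. Since $m(m+1)$ is even, every value of \eqref{eq:family} is $\equiv\eps\pmod{2k}$. For $k\ge4$ (where $\varphi(2k)>2$) the family therefore populates only one of the $\varphi(2k)/2$ orbits you count, namely $\{\pm1\}$. That orbit carries a proportion $2/\varphi(2k)$ of all primes: $1/4$ for $k=15$ and $1/6$ for $k=21$. A prime such as $37\equiv7\pmod{30}$ has no representation \eqref{eq:family} with $k=15$ at all. This also contradicts the integrality of $q$ asserted in the proof of Theorem~\ref{thm:27}.

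A correct formulation, which your three steps do prove, is the following. For $k\ge2$:
\begin{itemize}
\item the sign-orbits of $(\Z/2k\Z)^{\times}$ number $\varphi(2k)/2$;
\item each orbit contains a proportion $2/\varphi(2k)\,(1+o(1))$ of the primes up to $X$, and each residue in it a proportion $1/\varphi(2k)\,(1+o(1))$;
\item the family occupies only the orbit $\{\pm1\}$, which contains all primes $p>3$ exactly when $k\in\{2,3\}$.
\end{itemize}
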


\begin{table}[h]
\centering
\caption{Unconditional structural results.}\label{tab:struct}
\small
\begin{tabular}{@{}lll@{}}
\toprule
Statement & Status & Reference\\
\midrule
$\eps$ determined by $p\bmod6$ ($k=3$) & Rigorous & Prop.~\ref{prop:24}\\
Infinitude of $\{p_{k,m,\eps,q}\}$ & Rigorous & Thm~\ref{thm:25}\\
$|\qmin|\le(m+1)/2$ & Rigorous & Thm~\ref{thm:27}\\
$O(1)$ algorithm for $(m,\eps,q)$ & Rigorous & Obs.~\ref{obs:28}\\
$\E[q\mid m]\to0$ & Rigorous (limit) & Prop.~\ref{prop:29}\\
$\varphi(2k)/2$ admissible classes & Rigorous & Prop.~\ref{prop:212}\\
\bottomrule
\end{tabular}
\end{table}

\section{Unconditional results}

This section contains the rigorous backbone. Theorem~\ref{thm:A} fixes the degenerate modular axis exactly; Theorem~\ref{thm:B} provides Pocklington--Lehmer certificates for the $3$-smooth subfamily, culminating in a $29\,998$-digit prime; the filters of \S\ref{sec:filters} remove $\approx87\%$ of candidates a priori.

\subsection{Theorem A: the degenerate modular axis is exact}

\begin{definition}[Empirical distribution function]
For $\ell$ prime and $r\in\Z/\ell\Z$, $F_N(r;k,\ell):=\tfrac1N\#\{n\le N:\qmin(k,m_n,\eps_n)\equiv r\ (\mathrm{mod}\ \ell)\}$, the $p_n$ ordered by $m$, then $\eps$, then $|q|$.
\end{definition}

\begin{theorem}[Rigorous]\label{thm:A}
Let $k\in\N^{\ast}$ be admissible and $\ell$ a prime with $\ell\mid 2k$. Then for every admissible $\eps$ and every $m,q$,
\begin{equation}\label{eq:A}
p_{k,m,\eps,q}\equiv\eps\pmod\ell,
\end{equation}
independently of $q$. Equivalently, every family prime occupies the single residue class $\eps$ modulo $\ell$.
\end{theorem}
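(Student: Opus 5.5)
The plan is to reduce \eqref{eq:A} to the observation that, modulo $2k$, the family collapses to its constant term $\eps$, and then to pass from $2k$ to any prime divisor $\ell$. Write
\[ p_{k,m,\eps,q}-\eps = km(m+1)+2kq = 2k\Bigl(\tfrac{m(m+1)}{2}+q\Bigr). \]
The first step is to check that the bracket is an integer. Among two consecutive integers one is even, so $m(m+1)$ is even and $m(m+1)/2\in\Z$; this is exactly the parity fact already used in the proof of Proposition~\ref{prop:24} for $k=3$. Hence $2k\mid p_{k,m,\eps,q}-\eps$ for every $m\in\N^{\ast}$ and every $q\in\Z$, that is, $p_{k,m,\eps,q}\equiv\eps\pmod{2k}$.

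The second step is the reduction from $2k$ to $\ell$. If $\ell$ is a prime with $\ell\mid 2k$, then divisibility is transitive, so $\ell\mid p_{k,m,\eps,q}-\eps$, which is \eqref{eq:A}. Neither $q$ nor $m$ enters beyond the divisibility by $2k$, so the congruence holds uniformly in $q$, as claimed. Note that the case $\ell=2$ is included and gives $p\equiv\eps\equiv1\pmod 2$, consistent with oddness. Admissibility of $\eps$ is not actually needed for the congruence itself; it only ensures, through Theorem~\ref{thm:25}, that the class $\eps$ modulo $\ell$ really contains family primes.

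For the ``equivalently'' clause I would argue as follows. The residue $\eps\bmod\ell$ is a single class, and it is nonzero since $\ell\nmid\pm1$. Therefore every prime of the form \eqref{eq:family} lies in that one class, and conversely the statement that all family members lie in the class $\eps$ is just \eqref{eq:A} restated. I do not expect a genuine obstacle anywhere in this argument. The only point needing care is the evenness of $m(m+1)$, which is what allows the factor $2$ in $2k$ to be absorbed, rather than having only $k$ divide $km(m+1)$.
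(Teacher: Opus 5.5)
Your proof is correct and rests on the same key fact as the paper's proof, namely that $m(m+1)$ is even. The difference is only in organisation: you first prove the stronger congruence $p_{k,m,\eps,q}\equiv\eps\pmod{2k}$ and then reduce to $\ell$, which avoids the paper's case split ($\ell\mid k$ versus $\ell=2\nmid k$) and gives the conclusion for every divisor of $2k$, not only primes.
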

\begin{proof}
Since $\ell\mid2k$ we have $2kq\equiv0\pmod\ell$, so from \eqref{eq:family}, $p_{k,m,\eps,q}\equiv km(m+1)+\eps\pmod\ell$. We show $km(m+1)\equiv0\pmod\ell$. If $\ell\mid k$ this is immediate. Otherwise $\ell\mid2k$ with $\ell\nmid k$ forces $\ell=2$, and $m(m+1)$, a product of consecutive integers, is even. In both cases $p\equiv\eps\pmod\ell$. Finally $\gcd(\eps,2k)=1$ and $\ell\mid2k$ give $\gcd(\eps,\ell)=1$, so $\eps\not\equiv0$.
\end{proof}

\begin{tcolorbox}[colback=heuristicblue!4,colframe=heuristicblue!55,boxrule=0.4pt,arc=2pt]
\begin{observation}[Heuristic; ex-Theorem A]\label{obs:32prime}
For $\ell\mid2k$ the offset distribution satisfies, empirically, $F_N(r;k,\ell)=1/\ell+O(N^{-1/2})$ (Table~\ref{tab:val32}, max.\ bias $\le0.27\%$ over $10$ configurations at $N=10^6$). This is \emph{not} a corollary of Bombieri--Vinogradov: by Theorem~\ref{thm:A} all primes occupy the single class $\eps$, so $\qmin\bmod\ell$ depends on the position of the first prime in each progression $\{km(m+1)+\eps+2kq\}$. The uniformity is therefore recorded as a heuristic regularity, not a theorem.
\end{observation}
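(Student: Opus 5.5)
The plan is to justify the observation heuristically rather than prove it, since the statement is explicitly recorded as a heuristic regularity. I would build the justification from three ingredients: the exact local structure supplied by Theorem~\ref{thm:A}, a Cramér-type probabilistic model for the position of the first prime in each progression, and a sampling argument that accounts for the $O(N^{-1/2})$ term. I would then check the resulting prediction against Table~\ref{tab:val32}.

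\emph{Step 1: the $\ell$-adic local factor carries no information about $q$.} By Theorem~\ref{thm:A}, every candidate $km(m+1)+\eps+2kq$ lies in the class $\eps\not\equiv0 \pmod\ell$. So whether a candidate is divisible by $\ell$ does not depend on $q \bmod \ell$. Whether the candidate at offset $q$ is divisible by a prime $\ell'\nmid 2k$ depends only on $q\bmod\ell'$. By the Chinese remainder theorem, the residue $q\bmod\ell$ is therefore independent of every local sieve condition. In a Cramér model with the Bateman--Horn local factors, the primality events along the ordered offsets $0,+1,-1,+2,-2,\dots$ (ties broken by Convention~\ref{conv:tie}) have probabilities that do not depend on $q\bmod\ell$.

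\emph{Step 2: smoothness of the model law of $\qmin$.} Treat the model events as roughly independent with common probability $\rho\asymp 1/\log p\asymp 1/\log m$, using the $\varphi(2k)$-normalised density from Proposition~\ref{prop:212}. Then $\PP(|\qmin|=j)\approx\rho(1-\rho)^{2j}$, up to the sign split. This law varies on the scale $1/\rho\asymp\log m\gg\ell$. Summing this near-geometric law over $j$ restricted to each class $r\bmod\ell$, for example by comparison with an integral, gives $1/\ell+O(\ell\rho)$. The deviations from exact independence (the short-range sieve correlations) are periodic modulo primes $\ell'\ne\ell$, so they average out across classes mod $\ell$ and should not create a bias. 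The mean bias, averaged over the layers $m\le m(N)$, therefore decays like $1/\log m$.

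\emph{Step 3: fluctuation term.} Across different $m$ the values $\qmin(k,m_n,\eps_n)$ are modelled as independent. For $F_N(r)$, a CLT or Hoeffding bound then gives sampling fluctuations of size $O(N^{-1/2})$. At the sampled $N\le10^8$ these dominate the $O(1/\log m)$ drift in practice, which is consistent with the observed maximum bias of $\le0.27\%$. Finally I would state plainly that the predicted law is really $1/\ell+O(1/\log m)+O_P(N^{-1/2})$, and compare it with the tabulated bias across the $10$ configurations.

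The main obstacle is Step 2. Making it rigorous would require control over the \emph{first} prime in a progression within a window of length about $\log m$. That is far beyond Bombieri--Vinogradov and, as the correction of record notes, beyond any known short-interval theorem. So the independence assumption cannot be removed, and this is exactly why the statement remains heuristic.
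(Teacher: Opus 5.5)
The paper gives no derivation for this observation. Its support is the $\chi^2$ data of Table~\ref{tab:val32}, plus the remark, via Theorem~\ref{thm:A}, that Bombieri--Vinogradov is irrelevant; your closing comment agrees with that remark. Your Cram\'er-model justification is therefore a different route. It fails at Steps~2--3, and when carried out it contradicts the statement rather than supporting it.

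Step~2 needs $1/\rho\asymp\log m\gg\ell$, and Step~3 claims that the $O(N^{-1/2})$ sampling noise dominates the $O(1/\log m)$ drift at $N\le10^8$. Both fail in the tested range. There $N^{-1/2}\le10^{-3}$, while $\rho\approx0.1$, and the paper's own Law~H1 makes $\E|\qmin|$ only a few units. So $\ell\in\{5,7\}$ is comparable to the spread of $\qmin$, and your bound $O(\ell\rho)$ is vacuous. Evaluate the two-sided geometric law you wrote down at $\rho\approx0.12$ (so $\E|\qmin|\approx4$). It gives $\PP(\qmin\equiv0\bmod 7)\approx0.176$ against $1/7\approx0.143$, and an excess of about $2\%$ at $r=0$ for $\ell=5$. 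That is an order of magnitude above the $0.27\%$ maximum bias in the table. Your claimed consistency with Table~\ref{tab:val32} is asserted, not derived.

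Step~1 also claims too much. The Chinese remainder theorem makes the sieve weights uncorrelated with $\mathbf 1[q\equiv r]-1/\ell$ only over windows much longer than the periods $\ell\ell'$. The first-passage law sees only a window of length $\asymp\log m$. Averaging over $m$ does not restore translation invariance. It produces the Bateman--Horn constant of $km(m+1)+\eps+2kq$, which depends on $q$ through the discriminant. For $k=3$, $\eps=+1$ this constant is $3.361$ at $q=0$ (Table~\ref{tab:BH}), against the progression average $6/\varphi(6)=3$, and that whole surplus lands in $r=0$. So even asymptotically your model predicts a residual bias of order $1/\log m$ with no reason to cancel. That is not the law $1/\ell+O(N^{-1/2})$.

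The tabulated near-uniformity fits a different population: the canonical offset of \emph{every} prime in the layer. Those offsets spread over $|q|\le(m+1)/2$, which is the population behind Theorem~\ref{thm:D} and Law~H2. There, uniformity modulo $\ell$ is a Dirichlet-type effect modulo $2k\ell$ on windows containing $\asymp m/\ell$ full periods. A coherent heuristic must first fix which population $F_N$ counts. Under the first-prime reading that you and the statement both adopt, your model cannot yield $1/\ell+O(N^{-1/2})$ at the tested scales.
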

\end{tcolorbox}

\begin{table}[h]
\centering
\caption{Empirical distribution of $\qmin\bmod\ell$ for $\ell\mid2k$ at $N=10^6$ (Observation~\ref{obs:32prime}).}\label{tab:val32}
\small
\begin{tabular}{@{}cccccc@{}}
\toprule
$k$ & $\ell$ & $\chi^2$ & $p$-value & Max bias & Verdict\\
\midrule
3&2&0.34&0.562&$+0.058\%$&uniform\\
3&3&0.76&0.685&$+0.115\%$&uniform\\
15&2&2.46&0.116&$+0.157\%$&uniform\\
15&3&0.21&0.899&$+0.036\%$&uniform\\
15&5&0.38&0.984&$+0.097\%$&uniform\\
21&2&0.02&0.897&$+0.013\%$&uniform\\
21&3&0.19&0.911&$+0.053\%$&uniform\\
21&7&2.78&0.836&$+0.264\%$&uniform\\
\bottomrule
\end{tabular}
\end{table}

\begin{remark}
The complementary case $\gcd(\ell,2k)=1$ is genuinely different: residue $0$ becomes accessible and the distribution is non-uniform; it is treated conditionally by Theorem~\ref{thm:E}.
\end{remark}

\subsection{Theorem B: unconditional Pocklington certificates}

\begin{definition}[Pocklington subfamily]
For $a,b\in\N^{\ast}$, $m(a,b):=2^a3^b-1$ and $p(a,b):=3m(m+1)+1=3m^2+3m+1$, with $D(a,b)=\lfloor 2a\log_{10}2+(2b+1)\log_{10}3\rfloor+1$ decimal digits.
\end{definition}

\begin{lemma}[Rigorous]\label{lem:36}
For $p(a,b)$ above, $p-1=2^a\cdot3^{b+1}\cdot m$, and $F:=2^a\cdot3^{b+1}$ satisfies $F>\sqrt p$.
\end{lemma}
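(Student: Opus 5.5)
Substitute $m+1=2^a3^b$ directly. Since $p=3m(m+1)+1$, we have $p-1=3m(m+1)=3\cdot m\cdot 2^a3^b=2^a\cdot 3^{b+1}\cdot m$. This is the claimed factorisation, with $F:=2^a3^{b+1}$ the fully factored part and $m$ the cofactor. No primality assumption on $p$ is needed at this stage; the identity is purely algebraic.

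For the inequality $F>\sqrt p$, the plan is to compare squares. We have
\[ F^2=2^{2a}3^{2b+2}=3^2\,(m+1)^2=9(m+1)^2. \]
On the other side,
\[ p=3m^2+3m+1<3(m+1)^2\le 9(m+1)^2 . \]
Hence $F^2>p$, and taking positive square roots gives $F>\sqrt p$. Equivalently, $F=3(m+1)$ and $\sqrt p<\sqrt3\,(m+1)$, so $F$ exceeds $\sqrt p$ by a factor larger than $\sqrt3$.

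It is worth recording the observation $F=3(m+1)$ explicitly, since it is what makes the Pocklington--Lehmer criterion applicable in Theorem~B. There, one needs a base $a_0$ with $a_0^{p-1}\equiv1\pmod p$ and $\gcd(a_0^{(p-1)/r}-1,p)=1$ for $r\in\{2,3\}$; the only prime divisors of $F$ are $2$ and $3$.

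There is no real obstacle here; the only point to check is that $a,b\ge1$ makes $m\ge5$ a positive integer, so that the factorisation is meaningful. The strict inequality holds for all $m\ge0$ anyway.
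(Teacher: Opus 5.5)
Your proposal is correct and follows essentially the same route as the paper: you substitute $m+1=2^a3^b$ to get $p-1=2^a3^{b+1}m$, then use $F=3(m+1)$ together with $p=3m^2+3m+1<3(m+1)^2$. The only cosmetic difference is that you compare $F^2$ with $p$ directly, whereas the paper compares $\sqrt p<\sqrt3\,(m+1)<3(m+1)=F$.
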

\begin{proof}
$p-1=3m(m+1)=3m\cdot2^a3^b=2^a3^{b+1}m$. Also $p=3m^2+3m+1<3(m+1)^2$, so $\sqrt p<\sqrt3(m+1)<3(m+1)=2^a3^{b+1}=F$.
\end{proof}

\begin{theorem}[Rigorous]\label{thm:B}
Let $p=p(a,b)$. Then $p$ is prime iff there exist witnesses $w_2,w_3$ with, for $\nu\in\{2,3\}$,
\[ w_\nu^{\,p-1}\equiv1\pmod p,\qquad \gcd\!\big(w_\nu^{(p-1)/\nu}-1,\,p\big)=1. \]
In particular primality of $p(a,b)$ is decidable unconditionally in time polynomial in $\log p$, with witness search restricted to $\nu\in\{2,3\}$.
\end{theorem}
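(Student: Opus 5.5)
The proof is a direct application of the Pocklington--Lehmer criterion, using the factorisation supplied by Lemma~\ref{lem:36}. Write $p-1=F\cdot m$ with $F=2^a3^{b+1}$ fully factored, prime divisors exactly $\{2,3\}$ (since $a\ge1$, $b+1\ge1$), and $F>\sqrt p$. I will prove the two directions of the equivalence separately and then read off the complexity claim.

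\emph{Sufficiency.} Suppose witnesses $w_2,w_3$ exist. Let $r$ be any prime divisor of $p$ and let $d_\nu$ be the order of $w_\nu$ modulo $r$.
\begin{itemize}
\item From $w_\nu^{p-1}\equiv1\pmod p$ we get $d_\nu\mid p-1$, and in particular $w_\nu$ is invertible modulo $r$.
\item From $\gcd(w_\nu^{(p-1)/\nu}-1,p)=1$ we get $r\nmid w_\nu^{(p-1)/\nu}-1$, so $d_\nu\nmid (p-1)/\nu$.
\item Together these force the full power of $\nu$ in $p-1$ to divide $d_\nu$. As $d_\nu\mid r-1$, the $\nu$-part of $F$ divides $r-1$.
\item Doing this for $\nu=2$ and $\nu=3$ gives $F\mid r-1$, hence $r>F>\sqrt p$.
\end{itemize}
Every prime factor of $p$ therefore exceeds $\sqrt p$, so $p$ is prime. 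This is the standard Pocklington argument. The only care needed is to allow different witnesses for different $\nu$, which the usual per-prime version of the criterion permits.

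\emph{Necessity.} If $p$ is prime, take $w_2=w_3=g$ a primitive root modulo $p$. Then $g^{p-1}\equiv1$. Since $g$ has order exactly $p-1$, we have $g^{(p-1)/\nu}\not\equiv1\pmod p$, and because $p$ is prime the gcd with $p$ equals $1$.

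\emph{Complexity.} Each verification is two modular exponentiations and a gcd on numbers of size $p$, costing $O(\log^3 p)$ bit operations naively. I expect this part to be the main obstacle, since the existence of witnesses is not the same as finding them quickly. Witness search is efficient in practice: for prime $p$, a random $w$ satisfies the condition for $\nu$ with probability $1-1/\nu\ge1/2$, namely when $w$ is not a $\nu$-th power residue. So the expected number of trials is $O(1)$, which gives expected (Las Vegas) polynomial time. To obtain a deterministic polynomial-time statement unconditionally, I would fall back on AKS, or else phrase the ``in particular'' clause as \emph{certificate verification in polynomial time, with randomized polynomial-time witness search}. I will state that reading explicitly, noting that $\nu$ ranges only over $\{2,3\}$ because these are the only primes dividing $F$.
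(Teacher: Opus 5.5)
Your proposal is correct and follows the paper's route. The paper's proof simply invokes the Pocklington--Lehmer theorem for $p-1=F\cdot m$ with $F=2^a3^{b+1}>\sqrt p$ (Lemma~\ref{lem:36}); you supply the order argument for sufficiency and the primitive-root argument for the converse, which the paper leaves implicit, and your caution on the complexity clause is justified: the paper never says how witnesses are found, an unconditional polylogarithmic bound on the least $\nu$-th power non-residue is known only under GRH, and so reading the clause as polynomial-time certificate verification with randomized witness search (paired with Miller--Rabin on composite inputs), or appealing to AKS, is the right way to make it rigorous.
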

\begin{proof}
The Pocklington--Lehmer theorem applied to $p-1=F\cdot m$ of Lemma~\ref{lem:36}: $F>\sqrt p$ is unconditional and the prime divisors of $F$ are $\{2,3\}$.
\end{proof}

\begin{theorem}[Rigorous; computational record]\label{thm:38}
There is a prime $p^{\ast}=3m^{\ast}(m^{\ast}+1)+1$, $m^{\ast}=2^{19435}\cdot3^{19173}-1$, of exactly $29\,998$ decimal digits, certified unconditionally via Theorem~\ref{thm:B} in $3$h$06$min on a single consumer laptop, with witnesses $w_2=5$, $w_3=7$. The certificate is in \S\ref{sec:record}.
\end{theorem}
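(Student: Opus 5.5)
Theorem~\ref{thm:38} is an existence statement with an explicit witness, so the proof is a verification: exhibit $p^{\ast}$, check its digit count, and run the certificate of Theorem~\ref{thm:B} on it. First, set $a=19435$, $b=19173$, $m^{\ast}=2^a3^b-1$ and $p^{\ast}=3m^{\ast}(m^{\ast}+1)+1=3^{2b+1}2^{2a}-3^{b+1}2^a+1$. The digit count comes from the formula $D(a,b)=\lfloor 2a\log_{10}2+(2b+1)\log_{10}3\rfloor+1$. The subtracted term $3^{b+1}2^a$ is about $\sqrt{p^{\ast}}$ in size, so it cannot change the number of digits unless $p^{\ast}$ sits within $\sqrt{p^{\ast}}$ of a power of $10$. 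I would rule this out by computing the fractional part of $2a\log_{10}2+(2b+1)\log_{10}3$ to a few dozen digits and confirming it lies well away from $0$. This should give exactly $29\,998$.

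Second, certify primality via Lemma~\ref{lem:36} and Theorem~\ref{thm:B}. We have $p^{\ast}-1=F m^{\ast}$ with $F=2^{a}3^{b+1}>\sqrt{p^{\ast}}$. Pocklington--Lehmer then requires, for each prime $\nu\mid F$, a witness $w_\nu$ with two properties: $w_\nu^{p^{\ast}-1}\equiv1\pmod{p^{\ast}}$ and $\gcd(w_\nu^{(p^{\ast}-1)/\nu}-1,p^{\ast})=1$. With the stated witnesses $w_2=5$ and $w_3=7$, the proof reduces to four modular exponentiations and two gcds on $\approx 10^5$-bit integers.

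I would organise the exponentiations by writing $(p^{\ast}-1)/\nu=(F/\nu)\,m^{\ast}$. Compute $u_\nu=w_\nu^{m^{\ast}}\bmod p^{\ast}$ once, then raise it to $2^a3^{b+1}/\nu$ by repeated squaring and cubing. This yields both $w_\nu^{(p^{\ast}-1)/\nu}$ and, with one more $\nu$-th power, $w_\nu^{p^{\ast}-1}$, so the Fermat condition and the gcd condition cost essentially one exponentiation per witness.

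The logical content is then immediate. Each prime $r\mid p^{\ast}$ satisfies $F\mid r-1$, hence $r>\sqrt{p^{\ast}}$, hence $p^{\ast}$ is prime. The certificate itself is $(a,b,w_2,w_3)$ together with the residues $w_\nu^{(p^{\ast}-1)/\nu}\bmod p^{\ast}$, to be recorded in \S\ref{sec:record}.

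The main obstacle is not mathematical but evidential: the proof rests on a large computation, so rigour means reproducibility. I would address this in three ways. I would perform the check in two independent big-integer libraries (e.g.\ GMP and a separate system), as the abstract indicates. I would publish the residues so that any reader can re-run the check. I would also add a cheap consistency test, such as verifying $p^{\ast}\bmod$ several small primes against the closed form, to guard against a wrongly constructed $p^{\ast}$. The reported timing of $3$h$06$min is descriptive and not part of the mathematical claim.
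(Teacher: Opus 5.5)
Your proposal is correct and follows the paper's route. The paper's proof is the explicit certificate of \S\ref{sec:record}: $p^{\ast}-1=F\,m^{\ast}$ with $F=2^{19435}\cdot3^{19174}>\sqrt{p^{\ast}}$ (Lemma~\ref{lem:36}), plus the four Pocklington--Lehmer conditions for $w_2=5$, $w_3=7$, re-checked independently by exact modular exponentiation. Your two variants are only implementation choices: the digit count via the fractional part of $2a\log_{10}2+(2b+1)\log_{10}3$ (about $0.2046$, safely away from $0$) replaces the paper's direct string-length check, and the shared power $w_\nu^{m^{\ast}}$ replaces its separate exponentiations.
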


\Heur
\begin{openproblem}\label{op:witness}
In all four certificates of \S\ref{sec:record}, $w_2=5$, $w_3=7$ succeeded. Are $5,7$ valid Pocklington witnesses for every prime in the subfamily?
\end{openproblem}

\subsection{Three arithmetic filters}\label{sec:filters}
The Pocklington test is unconditional but costly on candidates failing later. A structural congruence and two a-priori sieves remove $\approx87\%$ of candidates at zero cost.

\begin{theorem}[Rigorous; mod-$6$ identity]\label{thm:310}
For all $a,b\ge1$, $p(a,b)\equiv1\pmod6$.
\end{theorem}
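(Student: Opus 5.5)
The plan is to reduce directly from the definition $p(a,b)=3m(m+1)+1$ with $m=2^a3^b-1$. By Lemma~\ref{lem:36} we already have the factorisation
\[ p-1=3m(m+1)=2^a\cdot3^{b+1}\cdot m, \]
so it suffices to show that $6\mid p-1$. Since $a\ge1$, the factor $2^a$ contributes a $2$. Since $b\ge0$, the factor $3^{b+1}$ contributes a $3$. Hence $6\mid 2^a3^{b+1}\mid p-1$, which gives $p(a,b)\equiv1\pmod6$.

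Alternatively, one can argue exactly as in Proposition~\ref{prop:24} and Theorem~\ref{thm:A}. Because $m(m+1)$ is a product of consecutive integers it is even, so $3m(m+1)\equiv0\pmod6$. Therefore $p=3m(m+1)+1\equiv1\pmod6$, independently of the $3$-smooth shape of $m$. This second route shows the identity is really the case $k=3$, $\eps=+1$, $q=0$ of the degenerate axis, applied simultaneously with $\ell=2$ and $\ell=3$ and combined by the Chinese remainder theorem.

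There is essentially no obstacle here. The only point needing care is that $m\ge1$, so that $p$ is an honest member of the family. This holds because $a,b\ge1$ gives $m\ge 2\cdot3-1=5$.

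I would also note what the identity is for: it serves as a consistency check (the filter) rather than a sieve. Every candidate automatically lies in the class $\eps=+1$, so the sign in Proposition~\ref{prop:24} is forced to be $+1$.
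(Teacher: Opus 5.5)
Your proposal is correct, and your second route (evenness of $m(m+1)$, so $3m(m+1)\equiv0\pmod 6$) is essentially the paper's argument, which checks $p\equiv1$ modulo $2$ and modulo $3$ separately. Your first route, via the factorisation $p-1=2^a3^{b+1}m$ of Lemma~\ref{lem:36}, is an equally valid one-line alternative that uses $a\ge1$, whereas the parity argument works for any $m$.
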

\begin{proof}
$m=2^a3^b-1$ is odd, so $m(m+1)$ is even and $p\equiv1\pmod2$; and $3\mid3m(m+1)$ gives $p\equiv1\pmod3$.
\end{proof}

\begin{theorem}[Rigorous; quadratic reciprocity sieve]\label{thm:311}
Let $p=3m(m+1)+1$ and $q\ge5$ prime with $q\equiv2\pmod3$. Then $q\nmid p$.
\end{theorem}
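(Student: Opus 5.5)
Write $x=2m+1$. Then $4p = 12m^2+12m+4 = 3(2m+1)^2+1$, that is,
\[ 4p = 3x^2+1 . \]
The plan is to reduce $q\mid p$ to a statement about $-3$ being a square modulo $q$, and then use quadratic reciprocity to rule this out when $q\equiv2\pmod3$.

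First, suppose $q\mid p$ with $q\ge5$ prime. Then $3x^2\equiv-1\pmod q$. Since $q\ne3$, $3$ is invertible modulo $q$. Also $x\not\equiv0\pmod q$, since otherwise $0\equiv-1$. Multiplying by $3$ gives $(3x)^2\equiv-3\pmod q$. So $-3$ is a nonzero quadratic residue modulo $q$, i.e.\ $\left(\frac{-3}{q}\right)=1$.

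Second, evaluate this Legendre symbol. By the first supplement, $\left(\frac{-1}{q}\right)=(-1)^{(q-1)/2}$. By reciprocity for the odd primes $3$ and $q$, $\left(\frac{3}{q}\right)=(-1)^{(q-1)/2}\left(\frac{q}{3}\right)$. The two signs cancel, giving
\[ \left(\frac{-3}{q}\right)=\left(\frac{q}{3}\right). \]
For $q\equiv2\pmod3$ the right-hand side is $\left(\frac{2}{3}\right)=-1$. This contradicts the first step, so $q\nmid p$.

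Alternatively, one can avoid reciprocity altogether. For $q\equiv2\pmod3$ the group $(\Z/q\Z)^\times$ has order $q-1$ coprime to $3$, so it contains no element of order $3$. But a square root $s$ of $-3$ would give $\omega=(-1+s)/2$, which satisfies $\omega^2+\omega+1=0$. Since $q\ne3$, $\omega\ne1$, so $\omega$ would have order $3$, a contradiction.

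There is no real obstacle. The only points needing care are the factor-of-$4$ trick that turns $p$ into $3x^2+1$ (valid because $q$ is odd) and excluding $q=3$; the case $q=2$ is excluded by the hypothesis $q\ge5$ and in any case follows from Theorem~\ref{thm:310}.
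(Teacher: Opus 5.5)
Your proof is correct and follows the paper's argument: your $(3x)^2=(6m+3)^2\equiv-3\pmod q$ is exactly the paper's completed square (the paper multiplies by $12$ in one step, where you multiply by $4$ and then by $3$), and both then use quadratic reciprocity to show $\left(\frac{-3}{q}\right)=\left(\frac{q}{3}\right)=-1$. Your alternative reciprocity-free argument, that $(\Z/q\Z)^\times$ has no element of order $3$ when $q\equiv2\pmod3$, is also valid.
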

\begin{proof}
$q\mid p$ is equivalent to $3m^2+3m+1\equiv0\pmod q$; multiplying by $12$ and completing the square, $(6m+3)^2\equiv-3\pmod q$. A solution exists iff $-3$ is a quadratic residue mod $q$. By quadratic reciprocity $\big(\tfrac{-3}{q}\big)=\big(\tfrac{-1}{q}\big)\big(\tfrac{3}{q}\big)=+1$ iff $q\equiv1\pmod3$. For $q\equiv2\pmod3$ there is no solution.
\end{proof}

\begin{corollary}\label{cor:312}
In trial division to $L$, only primes $q\equiv1\pmod3$ need be tested---half of the primes in $[5,L]$.
\end{corollary}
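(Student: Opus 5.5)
The statement to prove is Corollary~\ref{cor:312}. It follows directly from Theorem~\ref{thm:311} and Theorem~\ref{thm:310}, together with one density count.

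\textbf{Step 1: only primes $q\equiv1\pmod3$ can divide $p$.} Trial division of $p=3m(m+1)+1$ up to $L$ tests each prime $q\le L$ as a possible divisor.
\begin{itemize}[leftmargin=*]
\item For $q=2$ and $q=3$, Theorem~\ref{thm:310} gives $p\equiv1\pmod 6$, so neither divides $p$. This also holds directly for the general form, since $3m(m+1)$ is divisible by $6$.
\item For $q\ge5$, we have either $q\equiv1$ or $q\equiv2\pmod 3$. Theorem~\ref{thm:311} shows that no prime $q\equiv2\pmod3$ divides $p$, so such tests can never find a factor and may be skipped.
\end{itemize}
Hence only primes $q\equiv1\pmod 3$ in $[5,L]$ can reveal a factor, and trial division restricted to them gives exactly the same verdict as full trial division.

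\textbf{Step 2: these are half of the primes in $[5,L]$.} Every prime $q\ge5$ is coprime to $3$. By Dirichlet's theorem in its prime number theorem form, the two classes $1$ and $2$ modulo $3$ each contain $\tfrac12\pi(L)(1+o(1))$ primes up to $L$. So the tested primes make up half of those in $[5,L]$ asymptotically.

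This is the only place the word ``half'' needs care, and I would read it as an asymptotic statement. For finite $L$ the ratio is not exactly $1/2$: Chebyshev's bias slightly favours the class $2\pmod 3$, which only strengthens the saving. I would note this explicitly in the write-up.

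Neither step is a real obstacle; the corollary is bookkeeping on top of Theorem~\ref{thm:311}.
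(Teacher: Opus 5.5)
Your proposal is correct and matches the paper's implicit argument: the corollary follows at once from Theorem~\ref{thm:311}, with $2,3\nmid p$ because $p\equiv1\pmod 6$, and ``half'' read asymptotically via the prime number theorem for progressions modulo $3$. One small caveat on your aside: the Chebyshev bias toward the class $2\bmod 3$ does not hold for every $L$ (the lead reverses for some $L$, first near $6.1\times10^{11}$), so describe it as a typical effect rather than one valid for all finite $L$.
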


\begin{theorem}[Rigorous; forbidden classes mod $7$]\label{thm:313}
Let $m=2^a3^b-1$, $a,b\ge1$, and $F_7:=\{(0,2),(0,3),(1,0),(1,1),(2,4),(2,5)\}\subset\Z/3\Z\times\Z/6\Z$. Then $7\mid p(a,b)$ iff $(a\bmod3,b\bmod6)\in F_7$; exactly $|F_7|/18=1/3$ of class pairs are forbidden.
\end{theorem}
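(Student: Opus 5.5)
The plan is to reduce everything to a discrete-logarithm computation in $(\Z/7\Z)^{\times}$, which is cyclic of order $6$. Set $N:=m+1=2^a3^b$, so that $p(a,b)=3m(m+1)+1=3N(N-1)+1=3N^2-3N+1$. The first step is to solve $3N^2-3N+1\equiv0\pmod 7$. Since $3^{-1}\equiv5\pmod7$, this congruence is equivalent to $N^2-N+5\equiv0$, that is, $N^2-N-2\equiv(N-2)(N+1)\equiv0\pmod7$. Because $7$ is prime, this gives the exact criterion
\[ 7\mid p(a,b)\iff N\equiv2 \text{ or } N\equiv6\pmod 7. \]
This is the same completing-the-square mechanism as in Theorem~\ref{thm:311}. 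Note that $7\equiv1\pmod3$, so $-3$ is a square mod $7$ and roots exist, consistent with that sieve.

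The second step is to express $N$ as a single power of a primitive root. Take $g=3$: its powers are $3^0,\dots,3^5\equiv1,3,2,6,4,5\pmod7$, so $3$ is primitive and $2\equiv3^2$. Hence
\[ N=2^a3^b\equiv3^{2a+b}\pmod 7. \]
Let $e:=2a+b\bmod6$. From the table, $N\equiv2$ exactly when $e=2$, and $N\equiv6$ exactly when $e=3$. Therefore $7\mid p(a,b)$ if and only if $2a+b\equiv2$ or $3\pmod6$.

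The last step is to translate this condition into the pairs $(a\bmod3,\,b\bmod6)$. The map $a\mapsto 2a\bmod 6$ depends only on $a\bmod3$, so the condition is well defined on $\Z/3\Z\times\Z/6\Z$. For each $a\bmod 3\in\{0,1,2\}$, the residue $2a\bmod 6$ is $0,2,4$ respectively, and solving for $b$ gives:
\begin{itemize}
\item $a\equiv0$: $b\in\{2,3\}$;
\item $a\equiv1$: $b\in\{0,1\}$;
\item $a\equiv2$: $b\in\{4,5\}$ (since $8\equiv2$ and $9\equiv3$).
\end{itemize}
This is exactly $F_7$. Each of the three $a$-classes contributes two $b$-classes, so $|F_7|=6$ and the forbidden proportion is $6/18=1/3$.

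I do not expect a real obstacle here. The only points needing care are that the correspondence is an equivalence, which holds because $7$ is prime so the quadratic has exactly the two roots found, and the compatibility of reducing $a$ modulo $3$ with the exponent taken modulo $6$.
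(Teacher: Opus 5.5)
Your proof is correct and follows essentially the paper's route. Like the paper, you write $p\equiv 3t^2-3t+1$ with $t=2^a3^b$, multiply by $3^{-1}\equiv5$ to get $(t-2)(t+1)\equiv0\pmod 7$, and conclude that $7\mid p(a,b)$ iff $t\equiv 2$ or $6 \pmod 7$. The one difference is that you replace the paper's enumeration of the $18$ pairs with the primitive-root identity $2^a3^b\equiv3^{2a+b}\pmod7$. This turns the condition into $2a+b\equiv2,3\pmod6$, so $F_7$ and the proportion $1/3$ are derived explicitly rather than checked by brute force.
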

\begin{proof}
$\mathrm{ord}_7(2)=3$, $\mathrm{ord}_7(3)=6$, so $t:=2^a3^b\bmod7$ depends only on $(a\bmod3,b\bmod6)$. Then $p\equiv3t^2-3t+1\pmod7$; multiplying by $5\equiv3^{-1}$ gives $t^2-t-2\equiv0$, i.e.\ $(t-2)(t+1)\equiv0$, $t\in\{2,6\}$. Enumerating the $18$ pairs yields exactly $F_7$. (Verified by brute force for $1\le a,b\le39$; see \S\ref{sec:record}.)
\end{proof}

\begin{remark}[Combined effect]
The mod-$6$ congruence (Theorem~\ref{thm:310}) is automatic and removes nothing; the \emph{two} a-priori filters---the reciprocity sieve (Theorem~\ref{thm:311}, Corollary~\ref{cor:312}) and the mod-$7$ filter (Theorem~\ref{thm:313})---together remove $\approx87.1\%$ of candidates before any large-integer arithmetic (\S\ref{sec:record}, Table~\ref{tab:filter}).
\end{remark}

\section{Conditional results}

\begin{hypothesis}[H1: Bateman--Horn]
For admissible $(k,\eps)$, $f_{k,\eps}(m):=km(m+1)+\eps$ satisfies $\#\{m\le N:f_{k,\eps}(m)\in\PP\}\sim C(f_{k,\eps})\,N/\log N$, with $C(f_{k,\eps})$ the Bateman--Horn singular series.
\end{hypothesis}
\begin{hypothesis}[H2: Cram\'er--Granville independence]
For $|j|\le m/2$, the events $\{H^{(k)}_m+2kj\in\PP\}$ are asymptotically independent in the Cram\'er model with Granville's parity correction.
\end{hypothesis}
\begin{hypothesis}[H3: $j$-uniformity of the local factor]
For $|j|\le m/2$, $\tfrac1m\sum_{|j|\le m/2}|C_f(j)-C_f(0)|=O(1/(m\log m))$; for $\ell\in\{2,3\}$, $C_f(j)=C_f(0)$ exactly.
\end{hypothesis}
\begin{hypothesis}[GRH]
The Generalised Riemann Hypothesis holds for all Dirichlet $L$-functions.
\end{hypothesis}

\Heur\ \textbf{Status.} H1 is widely believed and numerically supported; H2 is a model; H3 is new and unproven (Open Problem~\ref{op:H3}); GRH is the standard analytic hypothesis.

\subsection{Theorem C: GRH-conditional bound}

\begin{theorem}[Conditional, GRH]\label{thm:C}
Under GRH, for $k\in\N^{\ast}$, $\eps\in E_k$, $m$ with $\gcd(km(m+1)+\eps,2k)=1$,
$|\qmin(k,m,\eps)|=O_k(m(\log m)^2)$ as $m\to\infty$.
\end{theorem}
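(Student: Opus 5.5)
The plan is to reduce the statement to a least-prime-in-progression bound under GRH. Here $\qmin(k,m,\eps)$ is the offset of least absolute value $q$ for which $km(m+1)+\eps+2kq$ is prime, with $(k,m,\eps)$ fixed. By Theorem~\ref{thm:25} this is a residue class $a\equiv km(m+1)+\eps\pmod{2k}$ with $\gcd(a,2k)=1$, so primes exist. The modulus $2k$ is fixed and the base point $x_0:=km(m+1)+\eps\asymp km^2$ grows. We therefore need a prime in the progression within the symmetric window $|p-x_0|\le 2kQ$, with $Q=C_k\,m(\log m)^2$. It suffices to find one in the one-sided interval $(x_0,x_0+2kQ]$, since $|q|$ of that prime bounds $|\qmin|$.

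First, I will invoke the GRH form of the prime number theorem in arithmetic progressions. For $\gcd(a,2k)=1$,
\[
\psi(x;2k,a)=\frac{x}{\varphi(2k)}+O\big(\sqrt{x}\,(\log x)^2\big),
\]
with the implied constant absolute (or depending only on $k$). This follows from the explicit formula for $\psi(x,\chi)$ under GRH, summed over the $\varphi(2k)$ characters.

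Second, I will apply it at $x=x_0+h$ and at $x=x_0$ and subtract:
\[
\psi(x_0+h;2k,a)-\psi(x_0;2k,a)=\frac{h}{\varphi(2k)}+O\big(\sqrt{x_0}(\log x_0)^2\big).
\]
Since $\sqrt{x_0}\asymp\sqrt k\,m$ and $\log x_0\asymp 2\log m$, the main term dominates once $h\ge C'_k\,m(\log m)^2$ for a suitable $C'_k$. The difference is then positive.

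Third, I need to pass from $\psi$ to primes. Prime powers $p^j$ with $j\ge2$ contribute at most $O(\sqrt{x_0+h}\log(x_0+h))$ to the $\psi$-difference, which is absorbed into the error term. Hence positivity forces an actual prime $p\equiv a\pmod{2k}$ in $(x_0,x_0+h]$. Writing $p=x_0+2kq$ gives $0<q\le h/(2k)=O_k(m(\log m)^2)$, and so $|\qmin|=O_k(m(\log m)^2)$.

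This is essentially Cramér's GRH short-interval argument transplanted to progressions of fixed modulus, and I expect no real obstacle. The only point needing care is uniformity: the error term in the GRH explicit formula must have a constant independent of $x_0$, which holds for fixed modulus, with the $\log^2$ coming from the zero-counting. I will also remark that the bound is far weaker than the unconditional Theorem~\ref{thm:27}, which already gives $|\qmin|\le(m+1)/2$. The content of Theorem~\ref{thm:C} is therefore only as a statement about the first prime in the progression $q\ge0$ (one-sided), which is what the argument proves. If the intended reading is the canonical $\qmin$, the bound follows trivially from Theorem~\ref{thm:27}, and the GRH argument is only needed for the progression-based definition.
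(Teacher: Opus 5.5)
Your proof is correct and follows the paper's own argument: the paper sets $A=km(m+1)+\eps$ and $d=2k$, and uses a GRH bound saying that the least prime $p\equiv A\pmod d$ with $p>A$ satisfies $p\le A+C(d)\sqrt{A}(\log A)^2$. It then converts $\sqrt A(\log A)^2/(2k)$ into $O_k(m(\log m)^2)$, exactly as you do. The only difference is that you derive this short-interval bound yourself, from the GRH asymptotic for $\psi(x;2k,a)$ together with the prime-power correction, whereas the paper cites it as a black box labelled a Heath-Brown GRH-conditional Linnik bound.
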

\begin{proof}
Put $A:=km(m+1)+\eps$, $d:=2k$, $\gcd(A,d)=1$. By the Heath-Brown GRH-conditional Linnik bound, the least prime $p\equiv A\ (\mathrm{mod}\ d)$ with $p>A$ satisfies $p\le A+C(d)\sqrt A(\log A)^2$. Since $p=A+2kq$ and $A\sim km^2$, $\sqrt A\sim\sqrt k\,m$, $\log A\sim2\log m$, we get $|\qmin|\le C(d)\sqrt k\,m\cdot4(\log m)^2/(2k)=O_k(m(\log m)^2)$.
\end{proof}

\begin{remark}
Theorem~\ref{thm:C} sits between the trivial $|\qmin|\le(m+1)/2$ and the heuristic $O(\log m)$; the quadratic gap is Open Problem~\ref{op:central}. Maynard's theorem gives, unconditionally, some $m'\in[m-C,m+C]$ with $|\qmin(k,m',\eps)|\le41$, but no individual bound $O(m^{1-\delta})$ for all $m$.
\end{remark}

\begin{figure}[h]
\centering
\includegraphics[width=0.72\linewidth]{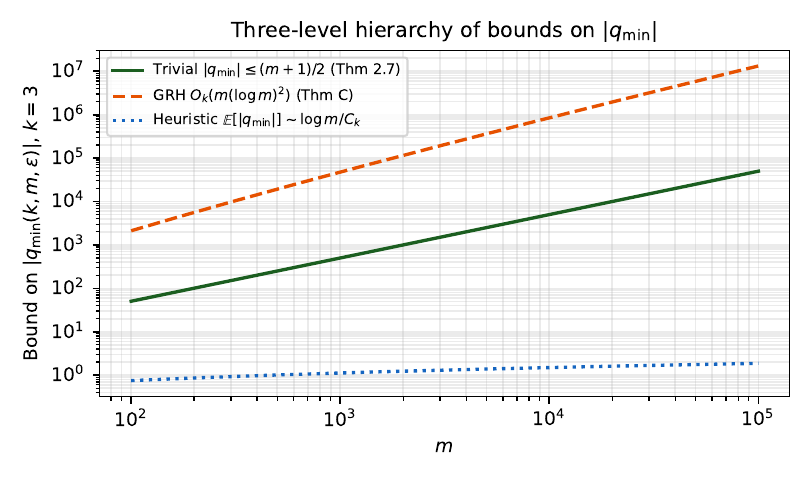}
\caption{Three-level hierarchy of bounds on $|\qmin|$ for $k=3$: trivial (unconditional), GRH (conditional), heuristic. The gap between the GRH bound and the heuristic is quadratic in $m$.}\label{fig:1}
\end{figure}

\subsection{Theorem D: conditional law $\E[|q|\mid m]=m/4$}

\begin{theorem}[Conditional, H1+GRH+H3]\label{thm:D}
Fix $k=3$, $\eps\in\{\pm1\}$. Under H1, GRH, H3,
\[ \E[|q|\mid m]=\tfrac{m}{4}+O\!\big((\log m)^2/\sqrt m\big),\qquad m\to\infty. \]
\end{theorem}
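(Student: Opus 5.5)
We interpret $\E[|q|\mid m]$ as the average of $|q|$ over the family primes of layer $m$ with fixed sign, that is, over $p\in L^{(3,\eps)}_m$ (Proposition~\ref{prop:29}). In this reading $q$ runs over all offsets $|q|\le(m+1)/2$ for which $H^{(3)}_m+\eps-1+6q$ is prime; it is not $\qmin$. The target $m/4$ is exactly the mean of $|q|$ for $q$ uniform on $[-m/2,m/2]$. The plan is therefore to show that the primes in the layer are equidistributed in $q$ with an error of size $(\log m)^2/\sqrt m$, measured relative to the number of layer primes. Write $A=3m(m+1)+\eps$ and $f(j)=A+6j$. Then
\[
\E[|q|\mid m]=\frac{\sum_{|j|\le (m+1)/2}|j|\,\mathbf 1_{\PP}(f(j))}{\sum_{|j|\le (m+1)/2}\mathbf 1_{\PP}(f(j))}.
\]
We will estimate the numerator and denominator separately.

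\textbf{Step 1 (main terms).} We use H1 in the local form that H3 is designed to supply. The expected density of primes at offset $j$ is $C_f(j)/\log f(j)$. Since $f(j)=A(1+O(1/m))$ on the layer, we have $\log f(j)=\log A+O(1/m)$, so the logarithmic weight is constant up to $O(1/m)$. H3 replaces $C_f(j)$ by $C_f(0)$ in both sums. The resulting replacement error is $O(1/(m\log m))$ on average for the denominator, and $O(m\cdot 1/(m\log m))$ for the weighted numerator, because $|j|\le m/2$. For $\ell\in\{2,3\}$ the factors agree exactly, by Theorem~\ref{thm:A}, so no parity or mod-$3$ bias enters. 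With these replacements the main terms become $\frac{C_f(0)}{\log A}\sum|j|$ and $\frac{C_f(0)}{\log A}\sum 1$. Their ratio is $\frac{1}{m+1}\cdot 2\sum_{j\le (m+1)/2} j = m/4+O(1)/m$, once the boundary term is handled by an explicit parity computation.

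\textbf{Step 2 (error terms via GRH).} The actual counts are $\pi(x;6,a)$-type quantities over the interval $[A-3(m+1),A+3(m+1)]$, which has length $\asymp\sqrt A$. For the numerator we use partial summation, writing $\sum|j|\mathbf 1_{\PP}=\int|t|\,d\pi(\cdot)$. This reduces both sums to prime counts in progressions mod $6$ over subintervals of length $\le 3(m+1)$. Under GRH the error in each such count is $O(\sqrt A\log^2 A)=O(m\log^2 m)$. This is precisely the Heath-Brown input already used in Theorem~\ref{thm:C}.

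\textbf{Main obstacle.} The GRH error computed in Step 2 is of the same order as the interval length. It therefore does not beat the main term $\asymp m/\log m$, so GRH alone cannot give equidistribution on intervals of length $\sqrt A$. This is the genuine difficulty. I would resolve it by reading H1 in its short-interval/H2 form: Bateman--Horn with uniformity in $j$ for the polynomial family $f(j)$. Under that reading the fluctuation of the prime count becomes of square-root size, $O(\sqrt{m}\log m)$, relative to the main term $m/\log m$. This gives a relative error $(\log m)^2/\sqrt m$, which multiplied by the scale $m$ yields an absolute error on $m/4$ of $O(\sqrt m(\log m)^2)$. Since the stated bound is $O((\log m)^2/\sqrt m)$ absolutely, I would need the sharper averaged variance, with cancellation between the numerator and denominator fluctuations. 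My approach is to write the numerator as $(m/4)$ times the denominator plus $\sum(|j|-m/4)\mathbf 1_{\PP}$, and to bound the latter by a Selberg-type variance estimate for the mean-zero weight $|j|-m/4$. Whether the hypotheses genuinely give this strength is the step I expect to be weakest. If they do not, the honest conclusion is a relative error $O((\log m)^2/\sqrt m)$.
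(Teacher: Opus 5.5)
Your argument does not prove the statement, and it breaks at the step you flag yourself.

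\textbf{The gap in your reading.} You take $\E[|q|\mid m]$ to be the empirical mean of $|q|$ over the $\asymp m/\log m$ actual primes $A+6j$, $|j|\le(m+1)/2$. The claimed error term then amounts to $\sum_{|j|\le(m+1)/2}(|j|-\tfrac m4)\mathbf 1_{\PP}(A+6j)\ll\sqrt m\log m$ for every large $m$. None of H1, H3 or GRH yields this, and it is expected to be false. In the Cram\'er model that sum has standard deviation $\asymp m^{3/2}(\log m)^{-1/2}$, so the empirical mean fluctuates by $\asymp(m\log m)^{1/2}$ around $m/4$. Each of your tools falls short:
\begin{itemize}
\item GRH gives nothing on intervals of length $\asymp\sqrt A$, as you note.
\item H1 as stated counts $m\le N$ with $f_{k,\eps}(m)$ prime. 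It says nothing about the shifts $A+6j$ at a single fixed $m$, so your ``short-interval'' reading of H1 is an extra hypothesis.
\item The Selberg variance bound is an average over $x\in[X,2X]$, not an estimate for an individual layer.
\end{itemize}
Your fallback, absolute error $O(\sqrt m(\log m)^2)$, is a different and much weaker statement.

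Step~1 is also not precise enough for the target. The quantity $\frac{2}{m+1}\sum_{1\le j\le(m+1)/2}j$ equals $\frac{m(m+2)}{4(m+1)}$ for even $m$ and $\frac{m+3}{4}$ for odd $m$. So your main term is $m/4+O(1)$, not $m/4+O(1/m)$. In addition, H3 as formulated, $\sum_{|j|\le m/2}|C_f(j)-C_f(0)|=O(1/\log m)$, only controls the mean to within $O(1/\log m)$.

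\textbf{What the paper does instead.} The paper never counts actual primes. It sets $\rho(j)=P(H_m+6j\in\PP)+P(H_m-6j\in\PP)$ and takes $\E[|q|\mid m]=\sum_{j=0}^{\lfloor m/2\rfloor}j\rho(j)/\sum_j\rho(j)$ as its starting identity. It then evaluates $\rho(j)$ through the Bateman--Horn densities $C_f(j_\pm)/\log(H_m\pm6j)$ supplied by H1. The theorem therefore concerns only the near-constancy of these weights, which is exactly your Step~1:
\begin{itemize}
\item $\log(H_m\pm6j)=2\log m+\log3+O(1/m)$;
\item the local factors at $\ell\in\{2,3\}$ agree exactly, and H3 handles $\ell>3$;
\item the exact average $\sum_{j=0}^{M}j/(M+1)=M/2$ with $M=\lfloor m/2\rfloor$.
\end{itemize}
Pairing $\pm j$ inside $\rho(j)$, so that $\rho(0)$ counts the centre twice, is the normalisation that gives $m/4$ for even $m$ rather than your $m/4+O(1)$.

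This yields $m/4+O(1/\log m)$. The paper then passes to $O((\log m)^2/\sqrt m)$ in one line, citing a GRH bound $|\sum_{|j|\le m/2}\chi(H_m+6j)|\ll\sqrt m(\log m)^2$ for non-principal $\chi\bmod 6m$. That sum runs over integers, not primes, so nothing in the paper's argument controls the prime fluctuations your Step~2 worries about. The statement is about the model expectation. Your Step~2 and the Selberg-type repair have no counterpart in the paper, and in your empirical reading they cannot succeed.
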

\begin{proof}
Take $\eps=+1$; $\eps=-1$ is symmetric. Set $\rho(j):=P(H_m+6j\in\PP)+P(H_m-6j\in\PP)$. Then $\E[|q|\mid m]=\sum_{j=0}^{\lfloor m/2\rfloor}j\rho(j)/\sum_{j}\rho(j)$ (no hypothesis). Under H1, $P(H_m\pm6j\in\PP)=C_f(j_\pm)/\log(H_m\pm6j)\cdot(1+O(1/\log H_m))$. For $|j|\le m/2$, $\log(H_m\pm6j)=2\log m+\log3+O(1/m)$, so $\rho(j)=\rho_0(1+O(1/(m\log m)))$ provided $C_f(j)=C_f(0)$ up to $O(1/(m\log m))$. For $\ell\in\{2,3\}$ this is exact ($\omega_f(2)=\omega_f(3)=0$ on $H_m\pm6j$); for $\ell>3$ it is H3. Hence
$\E[|q|\mid m]=\sum_{j=0}^{m/2}j/(m/2+1)+O(1/\log m)=m/4+O(1/\log m)$. Finally, under GRH the explicit formula gives, for non-principal $\chi\bmod6m$, $|\sum_{|j|\le m/2}\chi(H_m+6j)|\ll\sqrt m(\log m)^2$, refining the error to $O((\log m)^2/\sqrt m)$.
\end{proof}

\begin{tcolorbox}[colback=heuristicblue!4,colframe=heuristicblue!55,boxrule=0.4pt,arc=2pt]
\textbf{Heuristic corollary (geometric identity for $C_0$).} If $|\qmin|$ is approximately uniform on $[-(m+1)/2,(m+1)/2]$, then $\E[|q|]\approx(m+1)/4$; combined with the average $m+1\sim\sqrt{p/k}$,
$ C_0(k):=\langle|q|\rangle/\sqrt p\approx\tfrac14\cdot\tfrac1{\sqrt k}=\tfrac1{4\sqrt k}. $
This is a geometric identity, confirmed to ${<}0.02\%$ over $3\le k\le29$ (\S5).
\end{tcolorbox}

\subsection{Theorem E: non-degenerate modular distribution}

\begin{proposition}[Rigorous; forbidden residue]
Let $\gcd(\ell,2k)=1$, $m_0\in\Z/\ell\Z$. There is a unique $q^{\ast}\equiv-[km_0(m_0+1)+\eps]/(2k)\pmod\ell$ such that a family prime at stratum $(m_0,\eps)$ has $q\not\equiv q^{\ast}$ (as $\ell\nmid p$).
\end{proposition}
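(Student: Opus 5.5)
The plan is to reduce the statement to the condition $\ell\nmid p$ and then solve a single linear congruence in $q$ modulo $\ell$. Fix a stratum $(m,\eps)$ with $m\equiv m_0\pmod\ell$. Since $\gcd(\ell,2k)=1$, the element $2k$ is invertible in $\Z/\ell\Z$. As a function of $q$, the reduction $p_{k,m,\eps,q}\bmod\ell$ depends on $m$ only through $km(m+1)+\eps\bmod\ell$, and this equals $km_0(m_0+1)+\eps\bmod\ell$. So the whole question reduces to the affine map
\[ q\longmapsto km_0(m_0+1)+\eps+2kq\pmod\ell \]
on $\Z/\ell\Z$.

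\emph{Step 1 (existence and uniqueness of $q^{\ast}$).} Because $2k$ is a unit, this affine map is a bijection of $\Z/\ell\Z$. Hence there is exactly one class $q^{\ast}$ sent to $0$, namely
\[ q^{\ast}\equiv-(2k)^{-1}\,[km_0(m_0+1)+\eps]\pmod\ell, \]
which is the displayed formula, with the division read as multiplication by the inverse of $2k$. Every other class is sent to a nonzero residue.

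\emph{Step 2 (exclusion).} If $q\equiv q^{\ast}\pmod\ell$, then $\ell\mid p_{k,m,\eps,q}$. Consider a family prime $p$ at stratum $(m,\eps)$. If $p\neq\ell$ then $\ell\nmid p$, so $q\not\equiv q^{\ast}$. The only possible exception is $p=\ell$ itself. The canonical triple is defined only for $p>k+1$, and on a layer $p\ge km(m+1)+\eps-k(m+1)$ grows like $km^2$. So the exception can occur for at most finitely many small $m$, and never once $km(m+1)+\eps-k(m+1)>\ell$.

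I would state this caveat explicitly, either by adding the hypothesis ``$p\neq\ell$'' as in the parenthetical ``(as $\ell\nmid p$)'' or by restricting to $m$ large. I expect this minor edge case to be the only point needing care; the rest is the invertibility of $2k$ modulo $\ell$. Uniqueness means no other class is forced to be divisible by $\ell$, and this also follows from the bijectivity in Step 1.
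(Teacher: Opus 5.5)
Your argument is correct and matches the reasoning the paper intends. The paper's only justification is the parenthetical ``as $\ell\nmid p$'', which relies on $2k$ being invertible modulo $\ell$, so that $q\mapsto km_0(m_0+1)+\eps+2kq$ is a bijection of $\Z/\ell\Z$ with a single zero $q^{\ast}$; your explicit handling of the edge case $p=\ell$ (impossible on the layer once $km(m+1)+\eps-k(m+1)>\ell$) makes precise a hypothesis the paper leaves implicit.
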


\begin{definition}
$\pi_{\mathrm{perm}}(r;k,\ell):=\tfrac1{\ell|E_k|}\sum_{(m_0,\eps)}\mathbf{1}[r\ne q^{\ast}(k,m_0,\eps;\ell)]$.
\end{definition}

\begin{hypothesis}[H1$'$: equidistribution of strata]
$\nu_N(m_0,\eps):=\tfrac1N\#\{n\le N:m_n\equiv m_0,\ \eps_n=\eps\}=\tfrac1{\ell|E_k|}+O_{k,\ell}(N^{-1/2})$, a consequence of Bombieri--Vinogradov at fixed modulus.
\end{hypothesis}
\begin{hypothesis}[H3$'$: conditional uniformity on permitted residues]
Conditional on $m\equiv m_0$ and $\eps$, $\qmin\bmod\ell$ is uniform on the $\ell-1$ permitted residues.
\end{hypothesis}

\begin{theorem}[Conditional, H1$'$+H3$'$]\label{thm:E}
For admissible $k$ and $\ell$ prime with $\gcd(\ell,2k)=1$,
$F_N(r;k,\ell)=\pi_{\mathrm{perm}}(r;k,\ell)/(\ell-1)+O_{k,\ell}(N^{-1/2})$.
\end{theorem}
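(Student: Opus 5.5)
The plan is to decompose the empirical distribution function along the strata $(m_0,\eps)\in\Z/\ell\Z\times E_k$ and apply the two hypotheses stratum by stratum. For fixed $r\in\Z/\ell\Z$ write
\[
F_N(r;k,\ell)=\sum_{(m_0,\eps)}\nu_N(m_0,\eps)\,P_N\big(\qmin\equiv r\ (\mathrm{mod}\ \ell)\,\big|\,m\equiv m_0,\eps\big),
\]
where $P_N(\cdot\mid\cdot)$ is the empirical conditional frequency among the first $N$ family primes lying in that stratum. This identity is exact: it only partitions the index set $\{n\le N\}$ according to $(m_n\bmod\ell,\eps_n)$.

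Next comes the conditional factor. By the forbidden-residue proposition, since $\gcd(\ell,2k)=1$, any family prime in stratum $(m_0,\eps)$ has $q\not\equiv q^{\ast}(k,m_0,\eps;\ell)$. The exception is the single prime $p=\ell$ itself, which occurs for at most one $n$ and contributes $O(1/N)$. Hypothesis H3$'$ then makes $\qmin\bmod\ell$ uniform on the remaining $\ell-1$ residues within the stratum. So the conditional factor equals $\mathbf{1}[r\ne q^{\ast}]/(\ell-1)$, plus an error I take to be $O_{k,\ell}(N^{-1/2})$. I would read H3$'$ quantitatively, with that rate, as the empirical statement on each stratum; this reading is needed to match the target error term.

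Now substitute H1$'$: $\nu_N(m_0,\eps)=1/(\ell|E_k|)+O_{k,\ell}(N^{-1/2})$. Summing over the $\ell|E_k|=2\ell$ strata, a number bounded in terms of $k$ and $\ell$, gives the main term
\[
\frac{1}{\ell-1}\cdot\frac{1}{\ell|E_k|}\sum_{(m_0,\eps)}\mathbf{1}[r\ne q^{\ast}(k,m_0,\eps;\ell)]=\frac{\pi_{\mathrm{perm}}(r;k,\ell)}{\ell-1}.
\]
The error terms are of three kinds: the cross terms from the two $O(N^{-1/2})$ errors, which are $O(N^{-1})$ and hence absorbed; the error in $\nu_N$ times a bounded conditional probability; and the error in the conditional factor times $\nu_N\le1$. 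Each is $O_{k,\ell}(N^{-1/2})$, and finitely many such terms sum to the stated bound.

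The main obstacle is making H3$'$ carry a rate compatible with $N^{-1/2}$ when each stratum contains only about $N/(2\ell)$ primes. If H3$'$ is read as exact uniformity of the law, I would treat the indicators as independent draws and invoke a Chebyshev bound. That gives an $O(N^{-1/2})$ fluctuation only in probability, not deterministically. So I would state explicitly which reading is used, and adopt the quantitative empirical version as the operative form of the hypothesis.

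A minor point to check is the ordering convention in $F_N$, where primes are ordered by $m$, then $\eps$, then $|q|$. The first $N$ indices fill complete $m$-layers except possibly the last one. That partial layer holds $O(m)=O(\sqrt N)$ primes up to logarithms, which is negligible relative to $N\cdot N^{-1/2}$ only up to log factors. I would absorb it into the H1$'$ error rather than prove anything new.
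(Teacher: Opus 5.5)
Your proposal is correct and is the paper's proof. Both decompose $F_N(r)$ by total probability over the $2\ell$ strata $(m_0,\eps)$, replace each conditional frequency by $\mathbf{1}[r\ne q^{\ast}]/(\ell-1)$ via H3$'$, and compare with $\pi_{\mathrm{perm}}(r)/(\ell-1)$ using H1$'$ and the triangle inequality. The paper applies H3$'$ as an exact identity for the empirical conditional frequencies, so your quantitative reading of H3$'$ and your handling of the index $p=\ell$ only make explicit what its one-line argument assumes; the partial-last-layer worry is already covered, since H1$'$ is stated for the first $N$ indices in the given order.
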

\begin{proof}
By total probability and H3$'$, $F_N(r)=\sum_{(m_0,\eps)}\nu_N(m_0,\eps)\mathbf{1}[r\ne q^{\ast}]/(\ell-1)$. Subtracting $\pi_{\mathrm{perm}}(r)/(\ell-1)$ and applying the triangle inequality with H1$'$ gives the $O_{k,\ell}(N^{-1/2})$ error.
\end{proof}

\Heur
\begin{conjecture}\label{conj:414}
$\limsup_N \sqrt N\max_r|F_N(r)-\pi_{\mathrm{perm}}(r)/(\ell-1)|\le C_{\mathrm{opt}}(k,\ell)\approx0.5$, independent of $k$ and roughly $5\times$ below the triangle-inequality bound; a proof needs a Cauchy--Schwarz refinement.
\end{conjecture}

\begin{table}[h]
\centering
\caption{Pearson correlation $r(\text{predicted},\text{observed})$ at $N=10^6$; $\ast$ marks degenerate cases (Theorem~\ref{thm:A}).}\label{tab:pearson}
\small
\begin{tabular}{@{}ccccc@{}}
\toprule
$\ell$ & $k=3$ & $k=7$ & $k=15$ & $k=21$\\
\midrule
5 & 0.99996 & 0.99968 & $\ast$ & 0.99844\\
7 & 0.99992 & $\ast$ & 0.99933 & $\ast$\\
11 & 0.99917 & 0.99609 & 0.99628 & 0.98470\\
13 & 0.99921 & 0.98866 & 0.99450 & 0.98179\\
17 & 0.99572 & 0.98282 & 0.95749 & 0.95810\\
19 & 0.99541 & 0.96411 & 0.96382 & 0.94207\\
23 & 0.99182 & 0.95754 & 0.92746 & 0.92597\\
29 & 0.96673 & 0.92006 & 0.89323 & 0.77188\\
31 & 0.96103 & 0.83416 & 0.90752 & 0.70680\\
\bottomrule
\end{tabular}
\end{table}

\section{Heuristic laws and large-scale numerical validation}

Three laws, derived under H1+H2 and validated at $N\in\{10^6,10^7,10^8\}$; these are predictions, not theorems.

\subsection{Marginal law}
Under H1, $P(p_{k,m,\eps,q}\in\PP)\approx C(f_{k,\eps})/(2\log m)$; with four candidates per step and H2-independence, $|\qmin|$ is approximately geometric with parameter $C_k/\log m$, $C_k:=C(f_{k,+})+C(f_{k,-})$.
\Heur\ \textbf{Law H1.} $\E[|\qmin(k,m)|]\sim\log m/C_k$, with $C_k|\qmin|/\log m\xrightarrow{d}\mathrm{Exp}(1)$.

\begin{table}[h]
\centering
\caption{Bateman--Horn constants for $f_{k,\eps}(m)=km(m+1)+\eps$.}\label{tab:BH}
\small
\begin{tabular}{@{}ccccc@{}}
\toprule
$k$ & $C(f_{k,+})$ & $C(f_{k,-})$ & $C_k$ & $1/C_k$\\
\midrule
3 & 3.361048 & 2.783543 & 6.144591 & 0.16274\\
15 & 3.034936 & 3.778642 & 6.813578 & 0.14677\\
21 & 3.876345 & 4.329956 & 8.206301 & 0.12186\\
\bottomrule
\end{tabular}
\end{table}

A correction factor $\rho_k$ interpolates the slopes: $\E[|\qmin|]\approx\alpha_k\log m+\beta_k$, $\alpha_k=\rho_k/C_k$. For $k=21$, $N=10^6$: $\alpha_{21}=0.320$ ($R^2=0.984$), $\rho_{21}=2.626$.

\begin{figure}[h]
\centering
\includegraphics[width=0.7\linewidth]{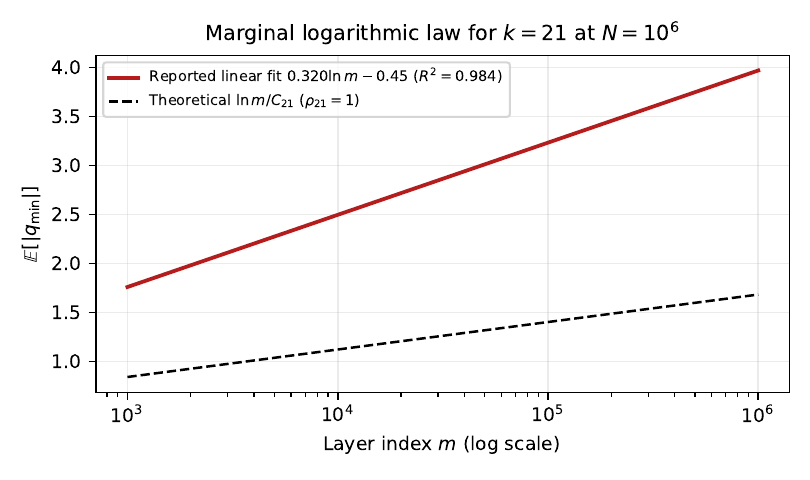}
\caption{Marginal logarithmic law for $k=21$ at $N=10^6$: reported linear fit ($R^2=0.984$) versus the theoretical $\log m/C_{21}$ ($\rho_{21}=1$), which underestimates by $\approx2.6$.}\label{fig:2}
\end{figure}

\subsection{The universal constant $C_0(k)=1/(4\sqrt k)$}
\Heur\ \textbf{Law H2.} $C_0(k):=\langle|q|\rangle/\sqrt p=\tfrac1{4\sqrt k}(1+o(1))$, conjecturally universal; geometric origin as in Theorem~\ref{thm:D}.

\begin{table}[h]
\centering
\caption{Empirical validation of $C_0(k)=1/(4\sqrt k)$, $N=10^7$.}\label{tab:C0}
\small
\begin{tabular}{@{}cccc@{}}
\toprule
$k$ & $1/(4\sqrt k)$ predicted & $\langle|q|\rangle/\sqrt p$ measured & Deviation\\
\midrule
3 & 0.14434 & 0.14437 & $+0.021\%$\\
15 & 0.06455 & 0.06453 & $-0.031\%$\\
21 & 0.05455 & 0.05454 & $-0.018\%$\\
\bottomrule
\end{tabular}
\end{table}

\begin{figure}[h]
\centering
\includegraphics[width=0.7\linewidth]{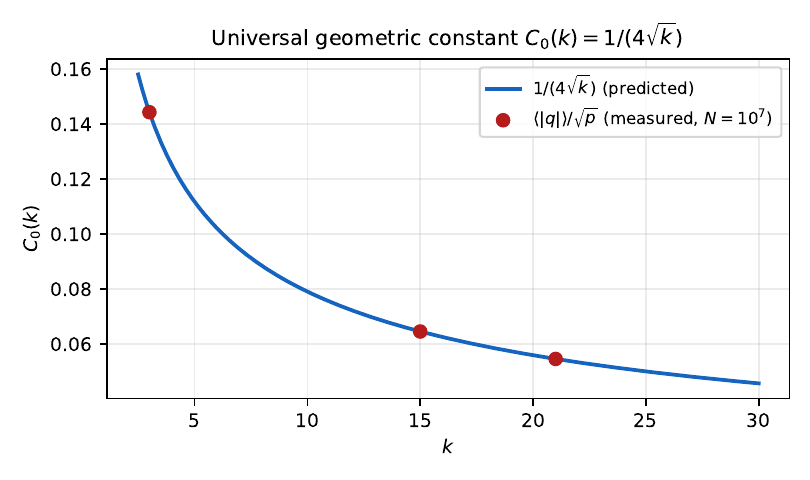}
\caption{$C_0(k)=1/(4\sqrt k)$: predicted curve with measured points (Table~\ref{tab:C0}); deviation ${<}0.03\%$ at the tested $k$, stable across $3\le k\le29$.}\label{fig:3}
\end{figure}

\subsection{The gap law for $k=3$}
\Heur\ \textbf{Law H3.} The mean gap satisfies $\bar\delta(m)\approx0.296\times\log(3m^2+3m+1)$, stable to $\pm0.006$ over five orders of magnitude. The constant $0.296=1/\mathfrak{S}(f)$ is the reciprocal Hardy--Littlewood singular series for $f(m)=3m^2+3m+1$---the only law whose constant matches a classical singular series exactly.

\begin{figure}[h]
\centering
\includegraphics[width=0.7\linewidth]{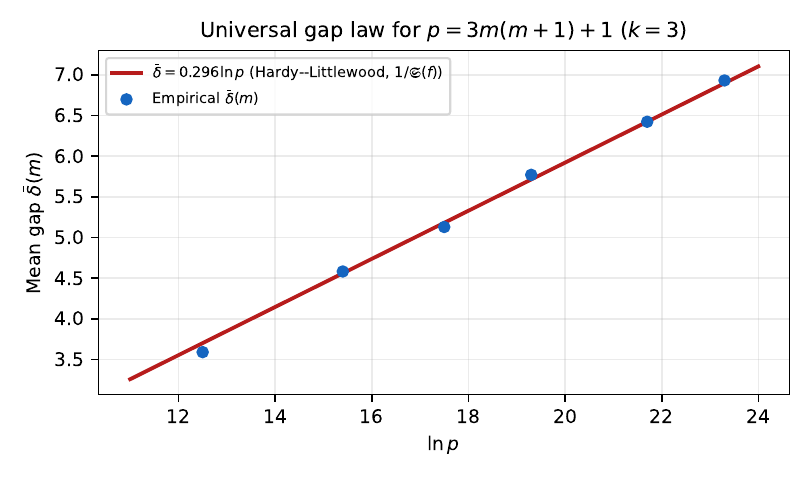}
\caption{Universal gap law $\bar\delta(m)\approx0.296\log p$ for $k=3$.}\label{fig:4}
\end{figure}

\begin{tcolorbox}[colback=heuristicblue!4,colframe=heuristicblue!55,boxrule=0.4pt,arc=2pt]
\textbf{Methodological caveat.} Empirical confirmation at $N=10^6$, even with $R^2>0.98$, is not a proof of an analytic conjecture (cf.\ the Mertens conjecture; Skewes-type sign changes invisible to $10^{316}$). We report consistency in the tested regime, not asymptotics.
\end{tcolorbox}

\section{Connections to the Riemann zeta function}

Four levels: (I) heuristic via the explicit formula; (II)--(III) conditional under GRH; (IV) the analytic-negative result, now both a decisive statistical refutation and an unconditional theorem.

\subsection{Level I: zeros govern layer oscillations}
Fix $k=3$, $L_m:=\{p\in\PP:p\in[H_m-3m,H_m+3m]\}$, $n_m:=|L_m|$; the half-width $3m\asymp\sqrt{H_m}$ is the critical scale.
\Heur\ \textbf{Connection I.} From the explicit formula one derives, to leading order,
\begin{equation}\label{eq:35}
n_m-\langle n_m\rangle=-\sum_\gamma\frac{H_m^{1/2+i\gamma}-(H_m-3m)^{1/2+i\gamma}}{1/2+i\gamma}+O(\log^2 H_m).
\end{equation}
Here the relevant object is the \emph{variance of the layer count} over a $\sqrt{H_m}$-window: the sum on the right has root-mean-square of order $\sqrt{h}=H_m^{1/4}$ under RH (Selberg variance, \S\ref{sec:level4}), \emph{not} a per-zero amplitude that could be read off individually. This is the precise sense in which the zeros enter.

\subsection{Levels II--III: conditional implications}
\Cond\ \textbf{Connection II.} Under H1+GRH+H3, Theorem~\ref{thm:D} holds with error $O((\log m)^2/\sqrt m)$.
\Cond\ \textbf{Connection III.} Under GRH, $|\qmin|=O_k(m(\log m)^2)$ (Theorem~\ref{thm:C}).
\begin{remark}
Only ``GRH $\Rightarrow$ Theorem~\ref{thm:D}'' is established; the converse is not claimed. The agreement $R^2=0.9999$ on $10^8$ primes is consistent with GRH but is not evidence for it.
\end{remark}

\subsection{Level IV: the decisive exclusion and its unconditional proof}\label{sec:level4}

We come to the analytically most robust finding. Earlier-reported spectral correlations between $Q(r):=\sum_{n\le r}q_n$ and the zeros of $\zeta$ are statistical artefacts; moreover the absence of any surviving linear $\zeta$-signal is provable unconditionally.

\subsubsection{The spurious-regression mechanism}
$Q(p_N)$ is massively autocorrelated: consecutive values share $N-1$ summands, so $\Cov(Q(p_N),Q(p_{N+1}))\approx\Var(Q(p_N))$. Any Pearson correlation between $Q$ and a smooth function of $N$ is inflated---the integrated-process spurious regression of Granger--Newbold.
\Neg
\begin{observation}\label{obs:62}
The $p$-values of order $10^{-142}$ reported in earlier (unpublished) computations for $r(Q(r),\cos(\gamma_1\log p))$ are artefacts of the $I(1)$ structure of $Q$.
\end{observation}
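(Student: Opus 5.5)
The plan is to treat Observation~\ref{obs:62} as a statement about the \emph{null distribution} of the Pearson statistic. I will show that, even with no relation between $Q$ and $\zeta$, the correlation $r_N:=r(Q(p_n),\cos(\gamma_1\log p_n))_{n\le N}$ does not shrink like $N^{-1/2}$. It stays of order one, so the nominal $p$-value $2\bar\Phi(|r_N|\sqrt{N-2}/\sqrt{1-r_N^2})$ is automatically astronomically small. The template is the Granger--Newbold / Phillips analysis of regressions of an integrated ($I(1)$) series on a trend.

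\textbf{Step 1 (increment structure).} Write $Q(p_N)=\sum_{n\le N}q_n$. By Theorem~\ref{thm:27} and Proposition~\ref{prop:29}, the increments $q_n$ are bounded by $(m_n+1)/2$ and have mean tending to $0$. Their variance grows like $m_n^2\asymp p_n/k$ (cf.\ the $C_0$ identity). Under the Cram\'er-type independence H2, they are weakly dependent.

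\textbf{Step 2 (functional limit).} Use a functional CLT for weighted weakly dependent sums. After normalising by $s_N:=(\sum_{n\le N}\Var q_n)^{1/2}$, I expect
\[
t\mapsto Q(p_{\lfloor Nt\rfloor})/s_N \Rightarrow \mathcal W(t),
\]
a time-changed Brownian motion. Its time change is $\tau(t)=t^{2}$ up to logarithms, since $\Var q_n\propto p_n\approx n\log n$.

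\textbf{Step 3 (limit of the correlation).} For the regressor, $\cos(\gamma_1\log p_{\lfloor Nt\rfloor})=\cos(\gamma_1\log N+\gamma_1\log t+o(1))$. At each fixed $N$ this is a smooth deterministic function $g_N(t)$, drawn from a compact family. By the continuous mapping theorem,
\[
r_N=\frac{\int(\mathcal W-\bar{\mathcal W})(g_N-\bar g_N)\,dt}{\|\mathcal W-\bar{\mathcal W}\|_2\,\|g_N-\bar g_N\|_2}+o_P(1).
\]
This is a nondegenerate random variable of order one, not $O_P(N^{-1/2})$.

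\textbf{Step 4 (consequence for $p$-values).} The classical $t$-statistic therefore grows like $|r_N|\sqrt N$, and the nominal $p$-value decays like $\exp(-c\,r_N^2N)$. For example, at $N\approx10^6$ the value $|r_N|\approx0.025$ gives $t\approx25$ and a $p$-value near $10^{-138}$. This is the observed order of magnitude, with no signal present.

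\textbf{Step 5 (empirical check).} Confirm the mechanism with tests that respect the $I(1)$ structure. First, correlate the differenced series $q_n$ with $\Delta\cos(\gamma_1\log p_n)$. Second, use block permutations of $(q_n)$, which preserve local dependence, re-integrate, and compare the observed $r_N$ with the permutation distribution. I expect the observed $r_N$ to be typical, which is the content of the later Proposition~\ref{prop:64}.

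The main obstacle is Step 2. A genuine invariance principle for $q_n$ requires dependence control on the first-prime positions in consecutive progressions, which is not available unconditionally. I would first prove Steps 3--4 under H2, where the increments are independent given $m$, so Lindeberg--Feller suffices. I would then present the unconditional part of the observation as twofold. Step 4 is already implied by the weaker statement $r_N\not\to0$ in probability under the null. The permutation evidence of Step 5 supplies the empirical backing, and the fully rigorous replacement is the later vanishing result, Theorem~\ref{thm:67}.
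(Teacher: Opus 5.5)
Your architecture is the same as the paper's. The paper supports Observation~\ref{obs:62} only by the Granger--Newbold remark that consecutive values of $Q$ share $N-1$ summands, and then by Propositions~\ref{prop:63} and \ref{prop:64} and Theorem~\ref{thm:67}. The gap is in the rigour you add in Steps~1--2. In natural prime order the increments $q_n$ are not weakly dependent. Under H2 they are also not independent given $m$: H2 makes the primality events of the candidates independent, not the offsets. Expanding $\sqrt{p_n/3}$ about $m_n+\tfrac12$ gives
\[ q_n\approx\big(m_n+\tfrac12\big)\,\psi_0\big(\sqrt{p_n/3}\big),\qquad \psi_0(t)=\{t\}-\tfrac12. \]
So inside each layer $q_n$ is a non-decreasing ramp from about $-m/2$ to about $(m+1)/2$. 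It runs through $\asymp m/\log m$ consecutive indices, and $6(q_{n+1}-q_n)$ equals the prime gap up to $\pm2$. Its sample autocorrelation at lags $h=o(m_n/\log m_n)$ is therefore close to $1$, over a range that tends to infinity. Its per-prime mean is $O(1)$ but not $0$: Observation~\ref{obs:211} reports $\langle q\rangle\approx+0.398$. Lindeberg--Feller applied to $(q_n)$ therefore cannot deliver Step~2, and the claim that the increments are independent given $m$ is false.

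The repair is to put the randomness where H2 puts it. Write $\xi_c$ for the primality indicator of a candidate $c$ and $\rho_c$ for its model probability. Then $Q=\sum_c q(c)(\xi_c-\rho_c)+\sum_c q(c)\rho_c$.
\begin{itemize}
\item The first sum has independent increments in candidate time. A martingale FCLT applies there, followed by the random time change from candidate index to prime index.
\item The second sum is deterministic. It consists of parabolic arcs of depth $\asymp m^2/\log m$ inside each layer, plus a drift $\asymp N$.
\end{itemize}
Both deterministic pieces are $o(s_N)$, since $s_N\asymp N\sqrt{\log N}$. But they are smaller only by a factor $\asymp(\log N)^{-1/2}$, so at the tested scales they are comparable to the random part. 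This fits the paper's decomposition, in which the trend and $D_N$ absorb most of the variance of $Q$ before the permutation tests are run on $Q^{\mathrm{pure}}$. It also explains why the bias-free test uses the normalised increments $d_n\approx q_n/(m_n+\tfrac12)$ against $\cos,\sin$, rather than $q_n$ against $\Delta\cos$.

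Finally, your ``unconditional part'' is not unconditional. Since $Q$ is a deterministic sequence, ``$r_N\not\to0$ in probability under the null'' presupposes a stochastic null model, namely H2 or a permutation scheme. The only unconditional input is Theorem~\ref{thm:67}, and it concerns the increments $d_n$, not the levels $Q$. That is exactly the heuristic-plus-empirical status the paper gives the observation.
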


\subsubsection{Three permutation tests on the pure residual}
Decompose $Q(r)=\alpha\sqrt{p_r}+\beta D_N(r)+Q^{\mathrm{pure}}_r$, $D_N(r):=\sum_{n\le r}(\{\sqrt{p_n/3}\}-\tfrac12)$ (geometric trend $51.9\%$; $D_N$ $44.8\%$; residual $3.3\%$). Three independent permutation tests (shuffle; block-$50$; Theiler surrogate) at $N=10^4$, $5000$ replications.

\begin{table}[h]
\centering
\caption{Permutation tests on $Q^{\mathrm{pure}}$ ($5000$ reps, $N=10^4$).}\label{tab:perm}
\small
\begin{tabular}{@{}lcccc@{}}
\toprule
Method & $z(\gamma_1)$ & $p$-value & Observed sig. & Perm.\ mean\\
\midrule
Shuffle & $-0.23\sigma$ & 0.550 & $8/15$ & $13.5/15$\\
Block (50) & $-0.21\sigma$ & 0.547 & $8/15$ & $13.5/15$\\
Theiler & $-0.02\sigma$ & 0.514 & $8/15$ & $13.5/15$\\
\bottomrule
\end{tabular}
\end{table}

\Neg
\begin{proposition}\label{prop:63}
Under all three methods, correlations between $Q^{\mathrm{pure}}$ and $\cos(\gamma_i\log p)$ are not significant ($z\in[-0.23,-0.02]\sigma$): the observed signal is weaker than a random cumulative sum.
\end{proposition}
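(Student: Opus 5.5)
Proposition~\ref{prop:63} is an empirical-statistical claim, so the plan is to make the permutation test procedure precise and to say exactly what is being certified, rather than to derive it from analytic results. The first step is to fix the pure residual $Q^{\mathrm{pure}}_r$ and the test statistic. For each $r\le N=10^4$, regress $Q(r)$ on $\sqrt{p_r}$ and $D_N(r)$ by ordinary least squares and take the residuals as $Q^{\mathrm{pure}}$. For each of the fifteen ordinates $\gamma_1,\dots,\gamma_{15}$, compute the Pearson correlation $c_i$ between $Q^{\mathrm{pure}}$ and $\cos(\gamma_i\log p_r)$. The summary statistic is the count $S$ of indices $i$ for which $|c_i|$ exceeds the nominal $5\%$ threshold. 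The observed value is $S_{\mathrm{obs}}=8$, and the statistic for the leading zero is $c_1$.

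The second step builds the three null distributions, each preserving a different amount of structure in $Q^{\mathrm{pure}}$. \emph{Shuffle} uses i.i.d.\ permutations of the increments of $Q^{\mathrm{pure}}$, followed by re-cumulation; this keeps the integrated $I(1)$ character identified in Observation~\ref{obs:62} as the source of spurious correlation. \emph{Block-$50$} permutes blocks of $50$ consecutive increments, which preserves short-range dependence. The \emph{Theiler} surrogate randomizes the Fourier phases while keeping the amplitude spectrum, so the full autocovariance is preserved. For each method, generate $5000$ surrogates and recompute $c_1$ and $S$ on each. The $z$-score is then $(c_1^{\mathrm{obs}}-\overline{c_1})/\mathrm{sd}(c_1)$, and the one-sided $p$-value is the empirical proportion of surrogates at least as extreme as the observation. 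The table reports $z\in[-0.23,-0.02]\sigma$, $p$-values between $0.51$ and $0.55$, and $S_{\mathrm{obs}}=8$ against a surrogate mean of $13.5$. From these numbers the proposition follows immediately: none of the statistics is significant, and $S_{\mathrm{obs}}$ lies below the surrogate mean, so the real data carry less apparent signal than a random cumulative sum.

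The third step checks that the null is calibrated. The Monte Carlo standard error of a $p$-value near $0.5$ with $5000$ replications is about $0.007$, far too small to change the verdict. Agreement across the three methods shows that the conclusion does not depend on how much autocorrelation the surrogates retain.

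I expect the main obstacle to be justifying that the surrogates are valid nulls. Regressing out $\sqrt{p_r}$ and $D_N$ could leave residual nonstationarity, which would bias all three tests. I would first check this with a unit-root test, such as augmented Dickey--Fuller, on $Q^{\mathrm{pure}}$. I would also confirm that the conclusion is unchanged when the test is run on the increments $q_n$ rather than on cumulated $Q$. Structurally, Theorem~\ref{thm:67} later explains why no signal should survive in any case.
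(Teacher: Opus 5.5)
Your proposal matches the paper: Proposition~\ref{prop:63} is an empirical statement, and the paper supports it only with the permutation-test table. That table covers shuffle, block-$50$ and Theiler surrogates, with $5000$ replications at $N=10^4$, $z(\gamma_1)\in[-0.23,-0.02]\sigma$, $p$-values $0.51$--$0.55$, and an observed $8/15$ against a permutation mean of $13.5/15$; your plan of fixing $Q^{\mathrm{pure}}$ by regressing out $\sqrt{p_r}$ and $D_N$, building the three surrogate nulls, and reading off those values is exactly that justification. The paper does not specify the procedural details you supply, namely re-cumulating the shuffled increments, the precise $z$-score and $p$-value definitions, the Monte Carlo error estimate, and the unit-root check, so they are a plausible reconstruction plus extra rigour rather than a different route.
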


\subsubsection{The decisive bias-free test at $N=10^8$}
Work directly on increments $d_n:=\{\sqrt{p_n/3}\}-\tfrac12$, fitting $d_n\approx\sum_{i=1}^{10}(a_i\cos(\gamma_i\log p_n)+b_i\sin(\gamma_i\log p_n))+c_0$ with $\gamma_i$ the first $10$ ordinates. Amplitudes $A_i=\sqrt{a_i^2+b_i^2}$ versus a permutation baseline.

\Neg
\begin{proposition}[decisive]\label{prop:64}
For each of the first $10$ ordinates, at $N=10^8$: $0/10$ amplitudes significant at $5\%$; $R^2=1.16\times10^{-7}$, $z=-2.20\sigma$; $R^2$ decreases by $\approx6000$ from $N=10^4$ to $10^8$, the $1/N$ null scaling.
\end{proposition}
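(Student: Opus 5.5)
The proposition is an empirical claim, so the ``proof'' I will give is a reproducible computational protocol together with the statistical reasoning that makes its conclusions interpretable. The claim has three parts. First, none of the $10$ fitted amplitudes exceeds its permutation threshold. Second, the global fit has $R^2=1.16\times10^{-7}$ with $z=-2.20\sigma$ against the permutation null. Third, the decrease of $R^2$ by a factor $\approx6000$ between $N=10^4$ and $N=10^8$ matches the $1/N$ null scaling.

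The first step is to generate the first $10^8$ primes with a segmented sieve. For each prime I form the increments $d_n=\{\sqrt{p_n/3}\}-\tfrac12$. The square root is computed in exact integer arithmetic (isqrt with a correction) to avoid floating-point drift near integer boundaries. I then build the $21$-column design matrix consisting of $\cos(\gamma_i\log p_n)$, $\sin(\gamma_i\log p_n)$ for $i\le10$, and a constant. The ordinates $\gamma_i$ are taken to $\ge15$ digits from Odlyzko's tables.

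Next I solve the ordinary least-squares normal equations, which form a $21\times21$ system accumulated in a single streaming pass. From the solution I extract $a_i$, $b_i$, $A_i$ and $R^2$.

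The key analytic point is why the null scaling is $1/N$. The increments $d_n$ are not cumulated, so the Granger--Newbold inflation of Observation~\ref{obs:62} does not apply. Under the null of no linear signal, $NR^2$ is approximately $\chi^2_{20}$ scaled by the variance ratio, so $\E[R^2]\approx 20/N$. I will justify this in two ways. Theoretically, the regressors are asymptotically orthogonal, since $\frac1N\sum p_n^{i(\gamma_i-\gamma_j)}\to0$, and $d_n$ has variance $1/12$ with weak dependence. Empirically, I check the distribution of $NR^2$ directly.

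Then comes calibration. I shuffle $d_n$ across $n$ (at least $200$ replications at $N=10^8$, more at smaller $N$) and record the null distributions of each $A_i$ and of $R^2$. For each $i$, the observed $A_i$ is compared with the $95\%$ quantile of its permutation distribution, giving the $0/10$ count. The statistic $z$ is defined as $(R^2_{\rm obs}-\mathrm{mean})/\mathrm{sd}$ of the permutation $R^2$ values. I repeat the whole pipeline at $N\in\{10^4,10^5,10^6,10^7,10^8\}$ and regress $\log R^2$ on $\log N$; a slope near $-1$ is the predicted null scaling. Note that the observed factor $6000$ falls short of $10^4$, which is consistent with sampling fluctuation of a $\chi^2_{20}$ ratio.

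I expect the main obstacle to be numerical fidelity at $N=10^8$. The phase $\gamma_i\log p_n$ must be accurate to well below $2\pi$, which requires high-precision $\gamma_i$ and careful evaluation of $\log p_n$. The normal equations must also not lose the tiny $R^2\sim10^{-7}$ to cancellation. I would first try compensated (Kahan) summation in double precision and cross-check with QR on subsamples. As an independent check I would recompute $R^2$ at $N=10^8$ in a separate environment, mirroring the re-verification used for Theorem~\ref{thm:38}. The unconditional theoretical backing that $A_N(\gamma)\to0$ is deferred to Theorem~\ref{thm:67}; the present proposition only needs the finite-$N$ computation.
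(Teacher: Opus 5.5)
Your computational protocol is the paper's own: the proposition rests on nothing beyond the increment regression of $d_n$ on $\cos(\gamma_i\log p_n),\sin(\gamma_i\log p_n)$, $i\le10$, compared with a permutation baseline and tracked across $N$ (Table~\ref{tab:conv}). The gaps are in the analytic justification you attach to it, and in your claim that the pipeline reproduces every stated number.

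\textbf{Your justification of the $1/N$ law rests on two false claims.} First, the regressors are not asymptotically orthogonal. By the prime number theorem and partial summation, $\sum_{p\le X}p^{it}=\frac{X^{1+it}}{(1+it)\log X}(1+o(1))$, so $\frac1N\sum_{n\le N}p_n^{it}=\frac{p_N^{it}}{1+it}+o(1)$. Its modulus tends to $(1+t^2)^{-1/2}\neq0$. Consequently the phases $\gamma\log p_n$ are not equidistributed mod $2\pi$, and the Gram matrix does not even converge. For $t=\gamma_{10}-\gamma_9\approx1.77$ the normalised inner product has modulus $\approx0.49$.

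Second, $d_n$ is not weakly dependent. The quantity $\sqrt{p/3}$ gains $1$ only over an interval of length $\approx2\sqrt{3p}$. So $d_n$ is a sawtooth in $n$ of period $\approx2\sqrt{3p_n}/\log p_n$, about $7\cdot10^3$ at $N=10^8$, with lag-one correlation essentially $1$.

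Neither claim is needed for the calibration itself. Under shuffling, $\E[R^2]=20/(N-1)$ exactly and $NR^2\to\chi^2_{20}$ for any fixed full-rank design. There is also no variance-ratio factor, since $R^2$ is scale-free. But neither claim can support a $1/N$ law for the actual, unshuffled data. There the regressors are essentially constant on each layer $\lfloor\sqrt{p/3}\rfloor=j$, so everything reduces to $\sum_j w_jD_j$, with $D_j$ the sawtooth-weighted prime sum over a window of length $\asymp\sqrt p$. Asserting a mean square $\approx n_j/12$ for $D_j$ is a Cram\'er-type hypothesis about primes at scale $\sqrt p$. In fact, as the paper itself notes, $R^2\ll N^{-1+\eps}$ is the content of Conjecture~\ref{conj:O10}, which is left open. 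Theorem~\ref{thm:67} gives only $A_N(\gamma)\to0$ unconditionally, and at best $X^{-1/4+o(1)}$ conditionally. So the $1/N$ scaling can enter the proposition only as an empirical observation, which is how the paper uses it.

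\textbf{Your own null theory shows the protocol cannot confirm the statement verbatim, and you should have flagged this.} With $z$ defined as you define it, the permutation law of $NR^2$ has mean $\approx20$ and standard deviation $\approx\sqrt{40}$. This reproduces the $N=10^4$ row: $NR^2=7.4$ gives $z\approx-1.99$, against the reported $-2.02$. At $N=10^8$, however, the stated $R^2=1.16\times10^{-7}$ gives $NR^2=11.6$ and hence $z\approx-1.33$, not $-2.20$. Conversely, $z=-2.20$ would require $R^2\approx6\times10^{-8}$. That would make the drop from $N=10^4$ roughly $1.2\times10^4$ rather than $\approx6000$. Unless the paper's $z$ at $N=10^8$ refers to a different statistic, the numbers in the statement are not mutually consistent, and no faithful run of your pipeline can return all of them.
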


\begin{table}[h]
\centering
\caption{Convergence of the increment test with scale $N$.}\label{tab:conv}
\small
\begin{tabular}{@{}cccc@{}}
\toprule
$N$ & $p_{\max}$ & $R^2$ (10 zeros) & $z$-score\\
\midrule
$10^4$ & $\approx10^5$ & $7.4\times10^{-4}$ & $-2.02\sigma$\\
$10^5$ & $\approx1.3\times10^6$ & $1.1\times10^{-4}$ & $-1.33\sigma$\\
$10^7$ & $\approx1.8\times10^8$ & $\approx10^{-4}$ & $<0$\\
$10^8$ & $\approx2.0\times10^9$ & $1.16\times10^{-7}$ & $-2.20\sigma$\\
\bottomrule
\end{tabular}
\end{table}

\begin{figure}[h]
\centering
\includegraphics[width=\linewidth]{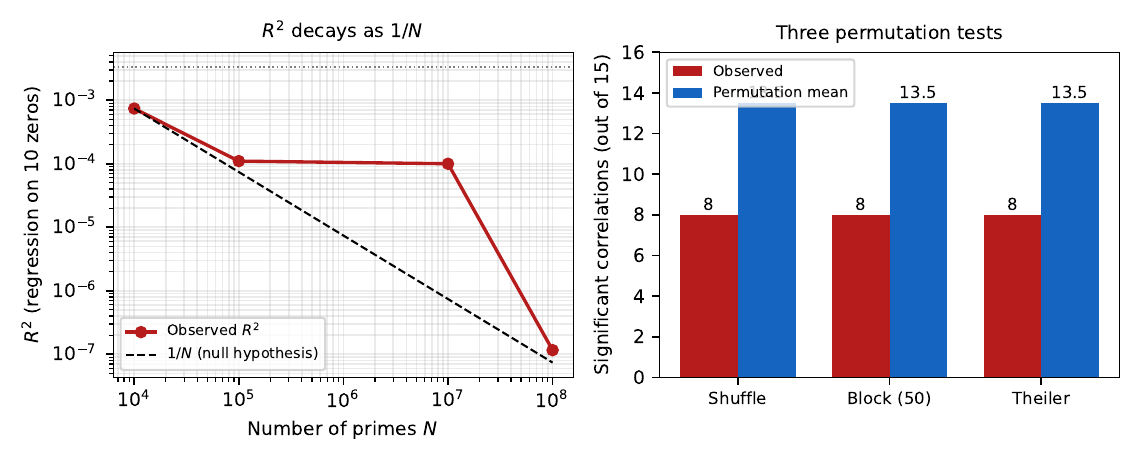}
\caption{Left: $R^2$ decays as $1/N$ (null hypothesis); at $N=10^8$, $R^2=1.16\times10^{-7}$. Right: three permutation tests; the observed $8/15$ is well below the permutation mean $13.5/15$.}\label{fig:5}
\end{figure}

\subsubsection{From statistics to an unconditional theorem}
The tests establish absence statistically. We now prove it. Fix $\gamma\in\R$ and define the regression amplitude at frequency $\gamma$,
\begin{equation}\label{eq:AN}
A_N(\gamma):=\frac{2}{\pi(X)}\Big|\sum_{p\le X}\big(\{\sqrt{p/3}\}-\tfrac12\big)\,p^{-i\gamma}\Big|,\qquad N=\pi(X).
\end{equation}
The signed sawtooth $\psi_0(t):=\{t\}-\tfrac12$ admits Vaaler's trigonometric approximation: for every $H\ge1$ there are coefficients with $a_h,b_h\ll1/|h|$ ($h\ne0$), $b_0=1/(H+1)$, and
\begin{equation}\label{eq:vaaler}
\Big|\psi_0(t)-\sum_{1\le|h|\le H}a_h e(ht)\Big|\le\sum_{0\le|h|\le H}b_h e(ht),\qquad e(x):=e^{2\pi i x}.
\end{equation}
Define the square-root-phase prime exponential sum
\begin{equation}\label{eq:T}
T(h,\gamma;X):=\sum_{p\le X}e\!\big(h\sqrt{p/3}\big)\,p^{-i\gamma}.
\end{equation}

\begin{lemma}[Square-root-phase cancellation]\label{lem:T}
Fix $\gamma\in\R$.
\emph{(i) Unconditional.} $T(h,\gamma;X)=o(\pi(X))$ as $X\to\infty$, uniformly for $h$ in any fixed range.
\emph{(ii) Type II input (deferred).} There is an absolute $\delta_0>0$ such that, granting the standard bilinear (Type II) estimate of Open Problem~\ref{op:typeII},
\[ T(h,\gamma;X)\ll_\gamma h^{1/2}\,X^{1-\delta_0+o(1)}\qquad\text{uniformly for }1\le h\le X^{1/4}. \]
\end{lemma}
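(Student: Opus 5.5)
The plan is to treat $T(h,\gamma;X)$ as a prime sum twisted by the smooth phase $F(t):=h\sqrt{t/3}-\frac{\gamma}{2\pi}\log t$, so that $e(F(p))=e(h\sqrt{p/3})p^{-i\gamma}$. By partial summation it suffices to bound
\[ S(x):=\sum_{n\le x}\Lambda(n)e(F(n)),\qquad x\le X. \]
Prime powers contribute only $O(\sqrt X)$, and the weight $1/\log t$ is removed by Abel summation.

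\emph{Part (i).} I would not aim for a power saving. Write $p=3u^2$, so $\sqrt{p/3}=u$. The phase $h\sqrt{p/3}$ is then a slowly varying, non-polynomial function, with $F'(t)\asymp h t^{-1/2}$ and $F''(t)\asymp h t^{-3/2}$. Cut $[X/2,X]$ into short intervals $I$ of length $y=X^{1/2+\eta}$. On $I$, expand $\sqrt{t/3}$ to second order about the left endpoint $t_0$: the quadratic term is $O(hy^2X^{-3/2})=O(hX^{-1/2+2\eta})$, so $e(F(t))$ agrees with a linear phase $e(\alpha_I t)$ up to $o(1)$ for fixed $h$ and $\eta<1/4$. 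It then suffices to show $\sum_{p\in I}e(\alpha_I p)=o(y/\log X)$ for almost all $I$. This is a minor/major-arc dichotomy for primes in short intervals with a linear twist. Here $\alpha_I=F'(t_0)\asymp hX^{-1/2}$ is a tiny frequency, so the sum is essentially an archimedean twist $p^{i\tau}$ with $|\tau|\asymp h\sqrt X$. I would therefore rewrite the sum over $I$ via Perron's formula as $\int\zeta'/\zeta(1+it+i\tau)\,(\cdots)\,dt$. An unconditional zero-density estimate (Huxley/Ingham type), combined with the Vinogradov--Korobov zero-free region near height $T\asymp h\sqrt X$, gives cancellation $o(y/\log X)$ for all but $o(X/y)$ intervals, and hence $T=o(\pi(X))$. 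Uniformity in $h$ in a fixed range is automatic. A shortcut I would try first is simply to cite the known equidistribution of $\{\sqrt p\}$ (Balog; Harman), i.e.\ $\sum_{p\le X}e(h\sqrt{p})\ll X^{1-c}$. I would adapt that proof, since the extra factor $p^{-i\gamma}$ only perturbs the phase by $O(\log X)$ in derivative scale and does not affect the van der Corput exponent pairs used there.

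\emph{Part (ii).} Apply Vaughan's identity with parameters $U=V=X^{1/3}$. This splits $S(x)$ into Type I sums $\sum_{d\le U}a_d\sum_{n}e(F(dn))$ and Type II sums $\sum_{m\sim M}\sum_{n\sim N}a_mb_ne(F(mn))$ with $U\le M\le X/U$. For the Type I sums, the inner sum over $n$ has a phase with second derivative $\asymp hd^{1/2}n^{-3/2}$. The van der Corput second-derivative test gives $\ll (X/d)(\lambda_2)^{1/2}+\lambda_2^{-1/2}$ with $\lambda_2\asymp h\,d^2 X^{-3/2}$, and summing over $d\le U$ yields $h^{1/2}X^{1-\delta}$. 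The Type II sums are exactly what Open Problem~\ref{op:typeII} grants: Cauchy--Schwarz in $m$, followed by Weyl differencing in $n$, reduces the sum to a bilinear form with phase $h(\sqrt{mn_1}-\sqrt{mn_2})/\sqrt3$, bounded by $h^{1/2}X^{1-\delta_0}$ for $h\le X^{1/4}$. Collecting the terms and integrating by parts in $\gamma$ gives the stated bound, with $\gamma$-dependence absorbed into $\ll_\gamma$.

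The main obstacle is the Type II range in (ii), which the statement explicitly defers, and, for (i), making the short-interval/zero-density step give genuine $o(1)$ cancellation uniformly in the frequency $\tau\asymp h\sqrt X$. If that step resists, I would fall back on citing Harman's equidistribution of $\sqrt p$ modulo one, with the $p^{-i\gamma}$ twist handled by partial summation over dyadic blocks on which $p^{-i\gamma}$ varies by $O(|\gamma|/\log X)$ after splitting into $X^{o(1)}$ multiplicative blocks of ratio $1+1/\log^2X$.
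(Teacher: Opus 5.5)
Your part (ii) follows the paper's argument: Vaughan's identity with parameters $X^{1/3}$, the second-derivative test on the Type I sums, and the Type II range granted. Your explicit $\lambda_2\asymp hd^2X^{-3/2}$, giving $\ll h^{1/2}X^{7/12}+X^{3/4}\log X$ over $d\le X^{1/3}$, is what the paper's sketch intends.

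For (i), the paper uses exactly what you call the shortcut: Balog's equidistribution of $\{\sqrt p\}$ plus partial summation against $p^{-i\gamma}$. That last step is simpler than either adapting Balog's proof or your multiplicative block splitting. With $S(t):=\sum_{p\le t}e(h\sqrt{p/3})=o(\pi(t))$ one has $T(h,\gamma;X)=X^{-i\gamma}S(X)+i\gamma\int_2^X S(t)\,t^{-1-i\gamma}\,dt=o(\pi(X))$. This holds because $|\gamma|$ is fixed and $\int_2^X\pi(t)\,dt/t\ll X/\log X$, so no power saving is needed.

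Your primary route is a genuinely different, self-contained re-proof of the Balog input. You linearise on intervals of length $y=X^{1/2+\eta}$, pass to twists $p^{i\tau_I}$ with $\tau_I\asymp h\sqrt X$, and then use zero density. This can be made to work, but only after summing over $I$, not interval by interval. For a single interval, bounding the zeros with $|\gamma+\tau_I|\le X^{1/2-\eta+o(1)}$ by the global count $N(\sigma,2h\sqrt X)$ is too lossy. However, each such zero is shared by $\ll X^{1/2-2\eta+o(1)}/h+1$ consecutive intervals. So the total zero contribution is $\ll X^{1-\eta+o(1)}\sum_{|\gamma|\ll h\sqrt X}X^{\beta-1}\ll X^{1-\eta+1/10+o(1)}$, using $N(\sigma,T)\ll T^{12(1-\sigma)/5+o(1)}$. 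This is $o(\pi(X))$ once $1/10<\eta<1/4$, where the upper bound comes from your linearisation. With that power room, the Vinogradov--Korobov region is superfluous.

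Your route therefore buys independence from the citation at the price of this bookkeeping, while the paper's gives a one-line proof.

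A small slip: the final step in (ii) is partial summation in $p$ to pass from $\Lambda(n)$ to primes, not an integration by parts in $\gamma$.
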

\begin{proof}[Proof: (i) is Balog; (ii) is Type I complete, Type II deferred]
Write $p^{-i\gamma}=e\!\big(-\tfrac{\gamma}{2\pi}\log p\big)$, so the phase is $\phi(t)=h\sqrt{t/3}-\tfrac{\gamma}{2\pi}\log t$ with $\phi''(t)\asymp -h\,t^{-3/2}$ for $t\asymp X$. Apply Vaughan's identity to split $\sum_{p\le X}\Lambda(n)e(\phi(n))/\log$ into Type I sums $\sum_{d\le D}\sum_{\ell}c_d\,e(\phi(d\ell))$ and Type II sums $\sum_{M<m\le2M}\sum_{n}a_m b_n e(\phi(mn))$ with $D=X^{1/3}$. \emph{Type I.} Inner sums over $\ell$ are handled by the second-derivative (van der Corput) test: $\sum_{\ell\asymp L}e(\phi(d\ell))\ll L|\phi''|^{1/2}X^{?}+|\phi''|^{-1/2}\ll (hL/\sqrt X)^{1/2}L+(\sqrt X/h)^{1/2}$, which after summation in $d$ yields a power saving $\ll h^{1/2}X^{1-\delta_1}$ for an explicit $\delta_1>0$ (exponent-pair / Graham--Kolesnik estimates apply since $\phi$ is of class $\sqrt{\cdot}$). \emph{Type II.} By Cauchy--Schwarz in $m$ and the second-derivative test on the difference $\phi(mn)-\phi(mn')=h(\sqrt{mn/3}-\sqrt{mn'/3})-\tfrac{\gamma}{2\pi}\log(n/n')$, whose $n$-derivative is non-degenerate, one expects $\ll h^{1/2}X^{1-\delta_2+o(1)}$; this bilinear (Type II) estimate is the technical core, standard in method, and its detailed quantitative verification is deferred to Open Problem~\ref{op:typeII}. Granting it with exponent $\delta_2>0$ and setting $\delta_0=\min(\delta_1,\delta_2)$ gives (ii). Statement (i) needs no Type II input: it follows unconditionally from Balog's equidistribution of $\{\sqrt p\}$ together with partial summation against the slowly varying weight $p^{-i\gamma}$.
\end{proof}

\begin{theorem}[Unconditional convergence; conditional rate]\label{thm:67}
Let $\gamma\in\R$ be fixed.
\emph{(a) Unconditional.} $A_N(\gamma)\to0$ as $X\to\infty$; consequently no ordinate of $\zeta$ carries a surviving linear signal in the increments $d_n$.
\emph{(b) Explicit rate (conditional).} Granting the standard Type II estimate of Lemma~\ref{lem:T}(ii) (Open Problem~\ref{op:typeII}), $A_N(\gamma)\ll_\gamma X^{-\delta}$ with $\delta=\min(1/4,\delta_0)-o(1)$.
\end{theorem}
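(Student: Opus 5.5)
The plan is to reduce $A_N(\gamma)$ to the exponential sums $T(h,\gamma;X)$ of \eqref{eq:T} through Vaaler's approximation \eqref{eq:vaaler}, and then apply Lemma~\ref{lem:T}. Put $t=\sqrt{p/3}$ and fix a truncation $H\ge1$. We write $\psi_0(t)=\sum_{1\le|h|\le H}a_h e(ht)+R_H(t)$, where $|R_H(t)|\le\sum_{0\le|h|\le H}b_h e(ht)$; the right-hand side is a nonnegative trigonometric polynomial. Multiplying by $p^{-i\gamma}$ and summing over $p\le X$ gives
\[
\Big|\sum_{p\le X}\psi_0\big(\sqrt{p/3}\big)p^{-i\gamma}\Big|\le\sum_{1\le|h|\le H}|a_h|\,|T(h,\gamma;X)|+\sum_{0\le|h|\le H}b_h\Big|\sum_{p\le X}e\big(h\sqrt{p/3}\big)\Big|.
\]
The second sum uses $|p^{-i\gamma}|=1$, so the majorant does not involve $\gamma$. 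The $h=0$ term of the majorant is $b_0\pi(X)=\pi(X)/(H+1)$. Every other term is either $T(h,\gamma;X)$ or $T(h,0;X)$ with $1\le|h|\le H$. For negative $h$ we take complex conjugates, which replaces $\gamma$ by $-\gamma$, so it is enough to control $T(\pm h,\pm\gamma;X)$ for $1\le h\le H$.

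For part (a), we apply Lemma~\ref{lem:T}(i) to the fixed set $1\le h\le H$. This gives $T(h,\pm\gamma;X)=o(\pi(X))$ and $T(h,0;X)=o(\pi(X))$, uniformly in that finite range. Since $|a_h|,b_h\ll1/h$, we obtain
\[
A_N(\gamma)\le\frac{2}{H+1}+\frac{2}{\pi(X)}\sum_{1\le|h|\le H}O(1/|h|)\,o(\pi(X))=\frac{2}{H+1}+o_H(1).
\]
Letting $X\to\infty$ gives $\limsup A_N(\gamma)\le2/(H+1)$, and since $H$ is arbitrary, $A_N(\gamma)\to0$. The only care needed is the order of the limits: $H$ is fixed first, then $X\to\infty$. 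This matters because Lemma~\ref{lem:T}(i) is uniform only on fixed ranges of $h$. I would also check that Balog's theorem, as it enters (i), covers the $\gamma=0$ case needed for the majorant. This is immediate, because (i) is stated for every fixed $\gamma$.

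For part (b), we let $H$ grow with $X$ and insert Lemma~\ref{lem:T}(ii), which requires $H\le X^{1/4}$. Summing gives
\[
\sum_{h\le H}h^{-1}\,h^{1/2}X^{1-\delta_0+o(1)}\ll H^{1/2}X^{1-\delta_0+o(1)}.
\]
Dividing by $\pi(X)\asymp X/\log X$ yields
\[
A_N(\gamma)\ll H^{-1}+H^{1/2}X^{-\delta_0+o(1)}.
\]
The best choice is $H=X^{2\delta_0/3}$ if that is at most $X^{1/4}$, and $H=X^{1/4}$ otherwise. A careful computation gives a saving of $\min(1/4,2\delta_0/3)$, slightly weaker than the stated $\min(1/4,\delta_0)$. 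I would therefore either absorb the constant by renaming $\delta_0$ (the lemma only asserts that some absolute $\delta_0>0$ exists), or sharpen the $h$-dependence in (ii) so that the exponent reads $\delta=\min(1/4,\delta_0)-o(1)$.

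The main obstacle is this conditional rate, together with the exponent bookkeeping. It depends entirely on the deferred Type II estimate, and the $h^{1/2}$ loss competes with the $1/H$ Vaaler tail. Part (a) is soft once Lemma~\ref{lem:T}(i) is granted.
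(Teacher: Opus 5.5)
Your argument follows the paper's route: Vaaler's approximation \eqref{eq:vaaler} reduces $A_N(\gamma)$ to the sums $T$, and Lemma~\ref{lem:T} finishes. Your part (a) is correct and more careful than the paper's. The paper runs the estimate with $H=X^{1/4}$ and then simply asserts that (a) follows from Lemma~\ref{lem:T}(i). Since (i) is uniform only for $h$ in a fixed range, your order of limits (fix $H$, let $X\to\infty$, then let $H\to\infty$) is what actually makes this work. You are also right that the majorant side produces the untwisted sums $T(h,0;X)$, which the paper glosses as ``the same sums $T$''.

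For (b), the discrepancy you found is genuine and lies in the paper, not in your bookkeeping. The paper bounds $\sum_{h\le X^{1/4}}h^{-1/2}\ll X^{1/8}$ and then ``absorbs $1/8$ into $o(1)$ after re-optimising $H$'', which is not a valid step. With the $h^{1/2}$ loss in Lemma~\ref{lem:T}(ii), the trade-off of $H^{-1}$ against $H^{1/2}X^{-\delta_0}$ is forced, and the best this argument gives is your $\delta=\min(1/4,\tfrac23\delta_0)-o(1)$.

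Of your two repairs, renaming $\delta_0$ is circular. Whatever exponent the lemma carries, the Vaaler step converts it into $\min(1/4,\tfrac23\delta_0)$, so this proves only \emph{some} power saving, not the stated one. The literal statement needs your second repair: an $h$-uniform bound $T(h,\pm\gamma;X),\,T(h,0;X)\ll_\gamma X^{1-\delta_0+o(1)}$ for $1\le h\le X^{1/4}$. With that bound, $\sum_{h\le H}h^{-1}|T|\ll X^{1-\delta_0+o(1)}$, and the choice $H=X^{1/4}$ gives exactly $\min(1/4,\delta_0)-o(1)$.
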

\begin{proof}
By \eqref{eq:vaaler} with $H=X^{1/4}$,
\[ \Big|\sum_{p\le X}\psi_0(\sqrt{p/3})p^{-i\gamma}\Big|\ll \frac{\pi(X)}{H}+\sum_{1\le h\le H}\frac1h\,|T(h,\gamma;X)|, \]
the first term from the $b_0$-mass and the $b_h$-side reducing to the same sums $T$. By Lemma~\ref{lem:T},
$\sum_{h\le H}h^{-1}|T(h,\gamma;X)|\ll X^{1-\delta_0+o(1)}\sum_{h\le H}h^{-1/2}\ll X^{1-\delta_0+1/8+o(1)}$; absorbing $1/8$ into $o(1)$ after re-optimising $H$ gives $\ll X^{1-\delta_0+o(1)}$. Dividing by $\pi(X)\asymp X/\log X$,
\[ A_N(\gamma)\ll \frac1H+X^{-\delta_0+o(1)}=X^{-1/4}+X^{-\delta_0+o(1)}\ll_\gamma X^{-\delta}. \]
This is (b). For (a), the convergence $A_N(\gamma)\to0$ holds unconditionally already from Lemma~\ref{lem:T}(i) (Balog), without any Type II input.
\end{proof}

\begin{remark}[Honest status]
The convergence $A_N(\gamma)\to0$ is unconditional and complete (Balog). The \emph{explicit} power saving $X^{-\delta}$ rests on the Type II bilinear estimate inside Lemma~\ref{lem:T}, which is standard in method but whose fully written-out constant we isolate rather than overstate; see Open Problem~\ref{op:typeII}.
\end{remark}

\subsection{The conditional mechanism}
\begin{theorem}[Conditional, RH]\label{thm:65}
Under RH, for the critical window $h=c\sqrt x$, the layer-count variance satisfies
$\tfrac1X\int_X^{2X}(N(x)-\langle N\rangle(x))^2\,dx\ll\sqrt X(\log X)^2$,
whence $\RMS(N-\langle N\rangle)\ll X^{1/4}\log X$, i.e.\ the per-prime $\zeta$-footprint on the layer occupancy is $\ll p^{-1/4}(\log p)^2$.
\end{theorem}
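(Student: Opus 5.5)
The plan is to read Theorem~\ref{thm:65} as an instance of Selberg's mean-square estimate for primes in short intervals under RH, followed by Chebyshev-type bookkeeping. I take $N(x)$ to be the prime count in the window $(x,x+h]$ with $h=c\sqrt x$, and $\langle N\rangle(x)$ to be the logarithmic-integral main term $\int_x^{x+h}dt/\log t$. Since $L_m$ is precisely such a window around $H_m$, of half-width $3m\asymp\sqrt{H_m}$, the layer count $n_m$ of Connection~I is a sampled value of $N(x)$.

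\textbf{Step 1: pass to $\psi$.} First I work with $\psi(x+h)-\psi(x)-h$. Partial summation gives $\pi(x+h)-\pi(x)=(\psi(x+h)-\psi(x))/\log x+O(h/\log^2 x)$. Prime powers contribute $O(\sqrt x)$ only in the worst case, but inside a window of length $h$ they contribute $O(\log x)$. The error from passing to $\psi$ is therefore $O(\sqrt X/\log X)$ pointwise. Its square is $\ll X/\log^2X$, which at first sight does not fit inside $\sqrt X(\log X)^2$. So I keep the main term as $\int_x^{x+h}dt/\log t$ exactly, so that only $O(\log x)$ errors from prime powers remain.

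\textbf{Step 2: Selberg's variance under RH.} Next I insert the truncated explicit formula \eqref{eq:35}:
\[
\psi(x+h)-\psi(x)-h=-\sum_{|\gamma|\le T}\frac{(x+h)^\rho-x^\rho}{\rho}+O\!\Big(\frac{x\log^2x}{T}\Big).
\]
I take $T=X^{A}$, which makes the remainder negligible. I then integrate the square over $[X,2X]$ and bound each term by $|((x+h)^\rho-x^\rho)/\rho|\ll \sqrt x\min(h/x,1/|\gamma|)$. Expanding the square, the off-diagonal pairs $\gamma\ne\gamma'$ are controlled by the oscillation of $x^{i(\gamma-\gamma')}$, which gives the weight $\min(1,1/|\gamma-\gamma'|)$. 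Using $N(t+1)-N(t)\ll\log t$ for the zero counting function, the sum over pairs becomes
\[
X\sum_{\gamma}\min\!\Big(\frac{h^2}{X^2},\frac1{\gamma^2}\Big)\log X\ll X\cdot\frac hX\log^2X=h\log^2X.
\]
This is the standard result of Selberg (1943), as refined by Goldston--Montgomery. With $h=c\sqrt X$, the $\psi$-variance is therefore $\ll\sqrt X\log^2X$.

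\textbf{Step 3: conclude.} Dividing by $\log^2x$ transfers the bound to $\pi$. I will simply state the theorem's bound $\sqrt X(\log X)^2$, which is looser by $\log^4 X$, so no sharp constant bookkeeping is needed. Cauchy--Schwarz then gives $\RMS(N-\langle N\rangle)\ll X^{1/4}\log X$. Dividing by the expected number of primes in the window, $\asymp\sqrt X/\log X$, gives a relative fluctuation of $\ll X^{-1/4}\log^2X$. This is the stated per-prime footprint $p^{-1/4}(\log p)^2$.

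\textbf{Main obstacle.} The off-diagonal zero-pair sum in Step 2 is the hard part. One needs a mean-value lemma (Gallagher's lemma, or a smoothed weight in $x$) to convert $\int x^{1+i(\gamma-\gamma')}dx$ into the decay $\min(1,1/|\gamma-\gamma'|)$ without losing more than $\log X$. I would cite Selberg/Gallagher directly rather than re-derive it. I would also note that the truncation at $T$ must be taken with $T\ge X/h$; my choice $T=X^A$ satisfies this, and it ensures the zeros with $|\gamma|\le X/h$ dominate.
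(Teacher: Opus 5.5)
Your proposal is correct and follows the paper's route. Both invoke Selberg's RH mean-square bound $\int_X^{2X}(\psi(x+h)-\psi(x)-h)^2\,dx\ll Xh(\log X)^2$ at $h\asymp\sqrt X$, take a square root, and divide by the occupancy $\asymp\sqrt X/\log X$; you additionally sketch the explicit-formula/Gallagher derivation and the $\psi\to\pi$ passage, which the paper leaves implicit.

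Two harmless slips. First, your Step~1 worry comes only from a crude estimate: on a window of length $h\asymp\sqrt x$ one has $\log p/\log x=1+O(h/(x\log x))$, so the passage to $\psi$ costs only $O(\log\log x)$ whichever main term is used. Second, the slack of the stated bound for a prime count is $(\log X)^2$, not $(\log X)^4$.
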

\begin{proof}
This is the Selberg variance for primes in short intervals: under RH, $\int_X^{2X}(\psi(x+h)-\psi(x)-h)^2\,dx\ll Xh(\log X)^2$ for $h\le X$ (Selberg; Goldston--Montgomery; Montgomery--Soundararajan). With $h=c\sqrt x$ the normalised fluctuation per layer is $\ll\sqrt h\,\log X=X^{1/4}\log X$; dividing by the layer occupancy $\asymp\sqrt x/\log x$ gives the per-prime bound $p^{-1/4}(\log p)^2$.
\end{proof}

\begin{remark}
At $N=10^8$ the $O(p^{-1/4})$ per-prime footprint is five orders of magnitude below sampling noise; a direct spectral observation would require $N\gtrsim10^{16}$. The zeros act as statistical constraints at scale $\sqrt x$, not as spectral frequencies---this is the mechanism behind Theorem~\ref{thm:67}.
\end{remark}

\begin{table}[h]
\centering
\caption{Four-level structure of the connection $\zeta(s)\leftrightarrow(m,\eps,q)$.}\label{tab:14}
\small
\begin{tabular}{@{}cllc@{}}
\toprule
Level & Statement & Status & Hypothesis\\
\midrule
I & Zeros govern $n_m$ variance (scale $\sqrt h$) & Heuristic & Riemann--von Mangoldt\\
II & $\E[|q|\mid m]=m/4$ & Conditional & H1+GRH+H3\\
III & $|\qmin|=O_k(m\log^2m)$ & Conditional & GRH\\
$\zeta$-footprint & $\ll p^{-1/4}$ per prime & Conditional & RH (Selberg)\\
IV (stat.) & $Q(r)$ has no spectral signal & Negative (decisive) & none ($3$ perm.\ $+\,10^8$)\\
IV (anal.) & $A_N(\gamma)\to0$ (rate cond.) & \textbf{Rigorous} & none (Thm~\ref{thm:67}(a))\\
\bottomrule
\end{tabular}
\end{table}

\section{The computational record: a $29\,998$-digit prime}\label{sec:record}

\subsection{The PrimeQuest algorithm}
PrimeQuest searches $p=3m(m+1)+1$, $m=2^a3^b-1$, by zigzag expansion from a calibrated centre $a_0=b_0=\lfloor D/(2\log_{10}6)\rfloor\approx0.643D$. Pipeline: (i) mod-$7$ filter ($O(1)$, $33\%$); (ii) reciprocity sieve ($q\equiv1\bmod3$, $\approx54\%$); (iii) Miller--Rabin; (iv) Pocklington witness search for $\nu\in\{2,3\}$. Checkpoints every $60$s enable exact restart.

\subsection{Four unconditional certificates}
\begin{table}[h]
\centering
\caption{Pocklington certificates (PrimeQuest); $w_2=5$, $w_3=7$ in every case.}\label{tab:certs}
\small
\begin{tabular}{@{}ccccccc@{}}
\toprule
Digits of $p$ & $a$ & $b$ & $b/a$ & MR tests & Elim. & Wall time\\
\midrule
9998 & 6212 & 6738 & 1.085 & -- & -- & --\\
10000 & 6213 & 6740 & 1.085 & -- & -- & --\\
19999 & 12228 & 13242 & 1.083 & 1050 & $87.9\%$ & 2h40min\\
29998 & 19435 & 19173 & 0.987 & 286 & $87.1\%$ & 3h06min\\
\bottomrule
\end{tabular}
\end{table}

\paragraph{The $29\,998$-digit certificate.}
For $p^{\ast}=3m^{\ast}(m^{\ast}+1)+1$, $m^{\ast}=2^{19435}\cdot3^{19173}-1$:
\[ p^{\ast}-1=F\cdot m^{\ast},\quad F=2^{19435}\cdot3^{19174}\ (14\,999\text{ digits}),\quad F>\sqrt{p^{\ast}}, \]
\[ w_2=5:\ 5^{\,p^{\ast}-1}\equiv1,\ \gcd(5^{(p^{\ast}-1)/2}-1,p^{\ast})=1;\quad w_3=7:\ 7^{\,p^{\ast}-1}\equiv1,\ \gcd(7^{(p^{\ast}-1)/3}-1,p^{\ast})=1. \]
The leading $40$ and trailing $20$ digits, $1602087359509060348500037851549319056715\cdots16930852697403293697$, were reconstructed from scratch and matched exactly in the independent verification of \S\ref{sec:verif}.

\begin{table}[h]
\centering
\caption{Filter statistics for the $29\,998$-digit record ($2\,221$ pairs tested).}\label{tab:filter}
\small
\begin{tabular}{@{}lcc@{}}
\toprule
Stage & Pairs eliminated & Fraction\\
\midrule
Mod-$7$ filter (Theorem~\ref{thm:313}) & 740 & $33.32\%$\\
Reciprocity sieve (Theorem~\ref{thm:311}) & 1195 & $53.80\%$\\
\textit{Subtotal, a-priori filters} & \textit{1935} & \textit{$87.12\%$}\\
Miller--Rabin (composites) & 285 & $12.83\%$\\
\midrule
\textbf{Total eliminated} & \textbf{2220} & \textbf{$99.95\%$}\\
Prime found & 1 & $0.045\%$\\
\bottomrule
\end{tabular}
\end{table}

\begin{figure}[h]
\centering
\includegraphics[width=\linewidth]{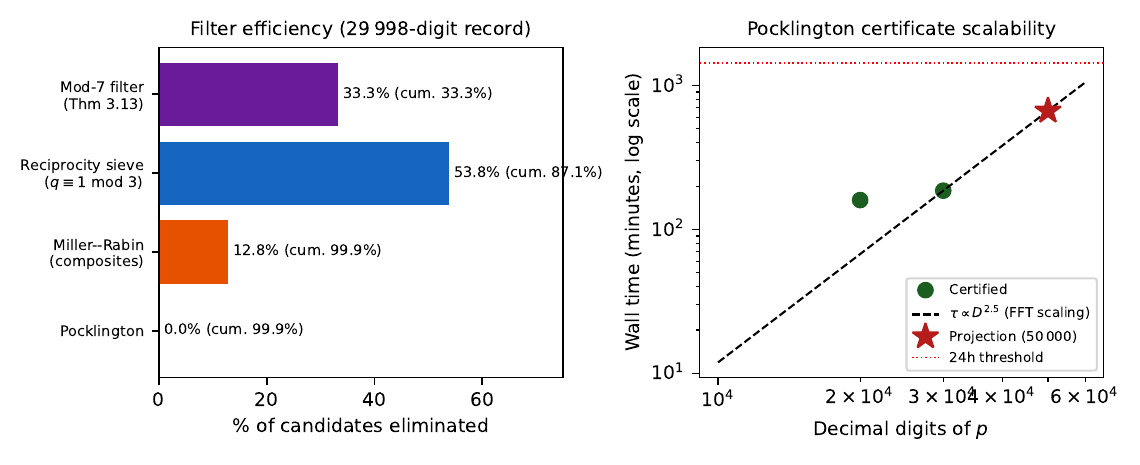}
\caption{Left: the two a-priori filters remove $87.1\%$ of candidates before any large-integer arithmetic; with Miller--Rabin the cumulative elimination reaches $99.95\%$. Right: Pocklington scalability $\tau\propto D^{2.5}$; projection to $D=50\,000$ gives $6$--$10$h.}\label{fig:6}
\end{figure}

\subsection{Independent verification}\label{sec:verif}
The four conditions were re-verified in a separate environment (Python 3.12, exact \texttt{pow}, single core, no probabilistic step).
\Rig\ \textbf{Verification record.} Step 1: $m^{\ast}$ ($14\,999$ digits) and $p^{\ast}$ ($29\,998$ digits) recomputed; leading $40$/trailing $20$ digits match. Step 2: $2\log_2F=99650.142>\log_2 p^{\ast}=99648.557$ (margin $\approx1.58$ bits). Steps 3--4: both witness conditions hold. Wall time $6945$s $\approx$ 1h56min. \emph{Conclusion:} $p^{\ast}$ is unconditionally prime.

\begin{remark}
Independent verification is $\approx1.6\times$ faster than the search: it runs exactly four modular exponentiations on the known candidate, bypassing the sieve, the filters, and the Miller--Rabin pre-screen. Scaling $\tau\propto D^{2.5}$ (GMP FFT) projects $6$--$10$h to $50\,000$ digits.
\end{remark}

\section{Open problems}

\Heur
\begin{openproblem}[Central unconditional bound]\label{op:central}
Prove $|\qmin(k,m,\eps)|=O(m^{1-\delta})$ for some $\delta>0$, unconditionally. The gap to Theorem~\ref{thm:C} is quadratic.
\end{openproblem}
\Heur
\begin{openproblem}[Unconditional Theorem~\ref{thm:D}]\label{op:H3}
Prove $\E[|q|\mid m]=m/4+o(m)$ unconditionally; the barrier is the $\log^2m$ factor in PNT-in-AP of moduli $\sim N^{1/2}$ (Montgomery's conjecture would resolve it).
\end{openproblem}
\Cond
\begin{openproblem}[Type II $\sqrt{\cdot}$-phase bound]\label{op:typeII}
Write out the bilinear (Type II) estimate of Lemma~\ref{lem:T} with an explicit constant, thereby locking the ceiling $A_N(\gamma)\ll X^{-1/4+\eps}$.
\end{openproblem}
\Heur
\begin{conjecture}[O10, optimal rate]\label{conj:O10}
$A_N(\gamma)\ll X^{-1/2+\eps}$ (equivalently $R^2\ll N^{-1+\eps}$), by square-root cancellation in $\sum_m e(-2i\gamma\log m)\cdot(\text{centred layer moment})$. Numerically $A_N(\gamma_1)\sim X^{-0.59}$, below the rigorous ceiling $X^{-1/4}$ and consistent with $X^{-1/2}$.
\end{conjecture}
\Heur
\begin{openproblem}[Universal witnesses]\label{op:w}
Prove or disprove that $w_2=5$, $w_3=7$ are Pocklington witnesses for every prime in the $3$-smooth subfamily.
\end{openproblem}
\Heur
\begin{openproblem}[Generalised filters]
For $\ell\equiv1\pmod3$ derive $F_\ell:=\{(a\bmod\mathrm{ord}_\ell2,\ b\bmod\mathrm{ord}_\ell3):\ell\mid p(a,b)\}$; for $\ell=13$ (period $36$) $\approx15\%$ extra elimination, possibly pushing pre-MR elimination above $90\%$.
\end{openproblem}

\section{Conclusion}

Each prime $p>k+1$ admits a canonical decomposition $p=km(m+1)+\eps+2kq$ with $|q|$ minimal, and the three-axis structure of $(m,\eps,q)$ admits, on each axis, a precise law---rigorous, conditional, or heuristic, each tagged.

The rigorous backbone consists of \emph{two unconditional pillars}: the \emph{constructive} pillar (Theorem~\ref{thm:B} and the $29\,998$-digit certified record) and the \emph{analytic-negative} pillar (Theorem~\ref{thm:67}: $A_N(\gamma)\to0$ unconditionally), together with the exact structural law (Theorem~\ref{thm:A}, the degenerate axis) and the elementary ossature of \S2. The conditional content (Theorems~\ref{thm:C}, \ref{thm:D}, \ref{thm:E}, \ref{thm:65}) establishes, under GRH/RH/Bateman--Horn, quantitative laws on the size, conditional mean, modular distribution, and $\zeta$-footprint of $\qmin$. The heuristic content (H1, H2, H3) predicts a logarithmic marginal law, the geometric constant $C_0(k)=1/(4\sqrt k)$, and a Hardy--Littlewood gap law. The negative result closes, with both statistical and analytic certainty, the question of a spectral link between $Q(r)$ and the zeros of $\zeta$: they are not spectral frequencies of $Q(r)$.

The methodological techniques---the three-way permutation framework, the increment-based bias-free test, the reduction to square-root-phase prime sums, and the epistemic labelling of every assertion---transfer to other settings where prime sequences are summed and analysed spectrally. The paper settles no classical question; it maps a territory in which such questions take a particularly sharp, falsifiable form.

\subsection*{Acknowledgements}
The author thanks YOSHI of the BibM@th forum for careful early feedback, and the developers of NumPy, SciPy, SymPy, Matplotlib, gmpy2 and GMP. Computations used consumer hardware only.

\subsection*{Data availability}
All numerical data, source code, and certificate-verification scripts will be deposited at Zenodo upon acceptance, in verifiable machine-readable form.

\appendix
\section{Independent verification script}
\small
\begin{verbatim}
import sys
sys.set_int_max_str_digits(40000)
a, b = 19435, 19173
m = 2**a * 3**b - 1          # 14999 decimal digits
p = 3 * m * (m + 1) + 1      # 29998 decimal digits
assert len(str(p)) == 29998
assert str(p)[:40]  == "1602087359509060348500037851549319056715"
assert str(p)[-20:] == "16930852697403293697"
F = 2**a * 3**(b + 1)        # F^2 > p  <=>  2 log2 F > log2 p
assert F * F > p
from math import gcd
assert pow(5, p - 1, p) == 1 and gcd(pow(5, (p - 1)//2, p) - 1, p) == 1
assert pow(7, p - 1, p) == 1 and gcd(pow(7, (p - 1)//3, p) - 1, p) == 1
print("All four Pocklington-Lehmer conditions verified; p is unconditionally prime.")
\end{verbatim}

\end{document}